\newtheorem{theorem}{Theorem}[section]
\newtheorem{proposition}[theorem]{Proposition}
\newtheorem{corollary}[theorem]{Corollary}
\newtheorem{lemma}[theorem]{Lemma}
\DeclareMathOperator{\Sym}{Sym}
\DeclareMathOperator{\Aut}{Aut}
\DeclareMathOperator{\AGL}{A\Gamma L}
\DeclareMathOperator{\GamL}{\Gamma L}
\DeclareMathOperator{\GamO}{\Gamma O}
\DeclareMathOperator{\GL}{GL}
\DeclareMathOperator{\SL}{SL}
\DeclareMathOperator{\SU}{SU}
\DeclareMathOperator{\Sz}{Sz}
\DeclareMathOperator{\GF}{GF}
\DeclareMathOperator{\VO}{VO}
\DeclareMathOperator{\VSz}{VSz}
\DeclareMathOperator{\VD}{VD}
\title{Two-closure of rank~3 groups in polynomial time}
\author{Saveliy V. Skresanov}
\date{\footnotesize Sobolev Institute of Mathematics, Novosibirsk, Russia.\\ E-mail: skresan@math.nsc.ru}
\begin{document}

\maketitle
\begin{abstract}
	A finite permutation group \( G \) on \( \Omega \) is called
	a \textit{rank~\( 3 \) group} if it has precisely three orbits in its induced action
	on \( \Omega \times \Omega \). The largest permutation group on \( \Omega \)
	having the same orbits as \( G \) on \( \Omega \times \Omega \) is called
	the \textit{\( 2 \)-closure} of \( G \). We construct a polynomial-time algorithm
	which given generators of a rank~3 group computes generators of its 2-closure.
\end{abstract}

\section{Introduction}

The graph isomorphism problem is one of the central topics in computational complexity theory,
being one of the natural candidates for a problem with an intermediate complexity status.
In its original formulation the problem asks if it is possible to determine in polynomial time that two graphs
given by their adjacency matrices are isomorphic or not, but it is well known to be equivalent to the
problem of computing generators of the full automorphism group of a given graph.

As the exact complexity status of graph isomorphism remains unknown, several successful attacks on the problem
have been made. Luks~\cite{luksBounded} developed a polynomial-time algorithm for isomorphism of graphs of bounded valence,
Babai and Luks~\cite{babaiCanonic} presented the moderately exponential \( \exp(O(\sqrt{n}\log^2 n)) \) algorithm
for graphs with \( n \) vertices, and finally in 2017 Babai solved graph isomorphism in
quasi-polynomial time, i.e.\ in time \( \exp(O(\log^c n)) \), for some \( c > 1 \)~\cite{babaiGI}.

Owing to the immense difficulty of the general problem, several relaxations have been developed,
for instance, graph isomorphism problem for graphs in specific classes such as planar graphs~\cite{hopcroftPlanar},
graphs of bounded Hadwiger number~\cite{ponomHadwiger} or bounded treewidth~\cite{bodlaenderWidth}.
Another approach is inspired by group theory and is chosen in this work.
Suppose we are given enough many automorphisms of a graph, is it then possible to reconstruct
its full automorphism group in polynomial time? One way to define ``enough'' would be to require known automorphisms to
act transitively on vertices, arcs and non-arcs of our graph; note that in this case the graph is strongly regular
and it is called a \textit{rank~\( 3 \) graph}. More generally, if \( G \) is a permutation group on \( \Omega \),
its orbits on \( \Omega \times \Omega \) are called \textit{\( 2 \)-orbits} and the number of 2-orbits is the \textit{rank}
of \( G \). If \( G \) is a permutation group of rank~3, it has one diagonal and two non-diagonal orbits, and in the
case when the order of \( G \) is even, each non-diagonal orbit induces a rank~3 graph.

If \( G \) is an arbitrary permutation group (of any rank), the largest permutation group on \( \Omega \) having
the same 2-orbits as \( G \) is called the \textit{\( 2 \)-closure} of \( G \) and denoted by \( G^{(2)} \).
Clearly, the full automorphism group of a rank~3 graph associated to a rank~3 group \( G \) is precisely \( G^{(2)} \),
so now our relaxation of graph isomorphism can be stated as ``given a rank~3 graph and a list of generators of an associated
rank~3 group \( G \), compute \( G^{(2)} \)''. This is the problem we solve in this paper, in fact, we even
drop the requirement of \( G \) being associated with some graph:

\begin{theorem}\label{mainth}
	Let \( G \) be a finite permutation group of rank~\( 3 \).
	Then one can compute the \( 2 \)-closure \( G^{(2)} \) in polynomial time.
\end{theorem}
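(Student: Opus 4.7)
The plan is to reduce the problem to computing the automorphism group of a low-rank relational structure. A standard orbit algorithm on $\Omega \times \Omega$, started from the given generators of $G$, produces the three 2-orbits in polynomial time; $G^{(2)}$ is exactly the group of permutations of $\Omega$ preserving this coloured relation, which is a rank~3 graph when $|G|$ is even and a pair of paired tournaments when $|G|$ is odd. The task then becomes: given $G$ as a subgroup of this automorphism group, output generators for the whole automorphism group in polynomial time.

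Next I would distinguish the imprimitive and primitive cases, since imprimitivity and a complete list of block systems can be detected in polynomial time by classical algorithms. An imprimitive rank~3 group has a very rigid structure: it preserves a partition of $\Omega$ into blocks such that both the action on blocks and the action inside a single block are 2-transitive. Consequently $G^{(2)}$ has a corresponding wreath/product decomposition, and this case reduces to smaller 2-closure problems which can be dispatched using known tools for 2-transitive groups.

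If $G$ is primitive of rank~3, I would invoke the Liebeck--Saxl and Liebeck classification, which places $G$ in one of three broad families: almost simple, grid (product action with socle a direct product of two isomorphic nonabelian simple groups), or affine (socle an elementary abelian $p$-group acting regularly on $\Omega$). In the almost simple and grid cases, the socle is nonabelian and can be located in polynomial time via constructive recognition of the composition factors; the classification then explicitly pins down the unique maximal rank~3 overgroup of $G$ inside $\Sym(\Omega)$, and one reads off generators from this description.

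The main obstacle is the affine case, where $G^{(2)}$ can be substantially larger than $G$. Here one must recognise the regular elementary abelian normal subgroup $V \triangleleft G$ and identify $\Omega$ with $V$ as an $\mathbb{F}_p$-vector space; then describe the linear stabiliser $H \leq \GL(V)$ and match it against Liebeck's list of affine rank~3 groups; and finally write down generators for the corresponding maximal rank~3 extension of $H$ inside $\GamL(V)$. Each step has to run in time polynomial in $|\Omega|$, and I expect this portion to carry almost all of the technical weight, in particular because constructive recognition of linear groups acting on a vector space of dimension $\log_p |\Omega|$ is delicate and restricts which black-box group routines can be used.
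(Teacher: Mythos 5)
Your outline reproduces the paper's case division (imprimitive, grid/product action, almost simple, affine) and the overall strategy is sound, but two essential ingredients are missing, so as it stands this is a plan rather than a proof. First, in the almost simple case the 2-closure is \( N_{\Sym(\Omega)}(L) \) where \( L \) is the socle, and ``reading off generators from the classification'' is not an algorithm: the classification tells you the abstract isomorphism type of \( G^{(2)} \), but to output it as a subgroup of \( \Sym(\Omega) \) you must actually compute a normalizer in the full symmetric group, which is not known to be polynomial time in general. The paper's argument rests on the Luks--Miyazaki result that \( N_{\Sym(\Omega)}(T) \) \emph{is} computable in polynomial time when \( T \) is a nonabelian simple group; without this (or an explicit constructive-recognition substitute) the step fails.

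Second, the affine case --- which you correctly identify as carrying almost all the weight --- is left entirely unexecuted, and ``match \( H \) against Liebeck's list and write down generators for the maximal extension'' conceals the real difficulty: to write down, say, \( F^{2m}\rtimes\GamO^{\epsilon}_{2m}(q) \) or \( F^{2m}\rtimes((\GL_2(q)\circ\GL_m(q))\rtimes\Aut(F)) \) as explicit permutations of \( \Omega \), one must first construct the geometric data defining these groups: the field \( \GF(q) \) inside \( \GL_d(p) \), a tensor basis for the decomposition \( V=U\otimes W \), or the invariant quadratic form \( \kappa \). None of this is ``read off'' from Liebeck's list; the paper devotes separate algorithms to recovering the field via centralizers of the derived subgroup, building a tensor basis by shifting a two-dimensional subspace with \( \SL(W) \), guessing and propagating the values of \( \kappa \) along \( G_0 \)-orbits using a bounded generating set, and handling the finitely many ``small'' families by brute force justified by a polynomial bound \( |G|\le n^{18} \) together with 4-generation of \( G_0 \). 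You would also need to address that constructive recognition of the linear group \( H\le\GL(V) \) in its action on all of \( V \) (rather than black-box recognition) is what makes these constructions feasible. Absent a concrete method for producing the field, the tensor decomposition, and the quadratic form, the affine branch of your argument does not go through.
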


Throughout this paper permutation groups will be specified by lists of generating permutations,
and ``polynomial-time'' means polynomial in the degree of the groups involved. 

The reader may validly point out that the measure of ``enough automorphisms'' we choose in this paper
is too restrictive, indeed, rank~3 groups admit a detailed classification~\cite{bannai, kantor, liebeckRank3, liebeckAffine}.
Nevertheless, this approach is quite fruitful. It turns out that the set of 2-orbits of an arbitrary permutation group
forms a \textit{coherent configuration}, a combinatorial object closely related to the original graph isomorphism
problem through the Weisfeiler-Leman color refinement algorithm~\cite{weisfeiler}. Such a coherent configuration is essentially
a colored graph, and the full automorphism group of this graph is the 2-closure of the original group.
This point of view initiated by Ponomarenko in~\cite{ponomGI2Cl} led to a series of results computing 2-closures
of permutation groups from various classes, such as, groups of odd order~\cite{evdokimovOdd}, nilpotent groups~\cite{ponomGI2Cl},
\( \frac{3}{2} \)-transitive groups~\cite{vasilev32} and supersolvable groups~\cite{ponomVasSuper}.
Our contribution can be thought as a continuation of this line of research.

A common approach to the computational 2-closure problem consists in embedding the 2-closure
inside a group with restricted composition factors (for example, an iterated wreath product of cyclic groups) and
then applying the Babai-Luks algorithm~\cite[Corollary~3.6]{babaiCanonic} to find the 2-closure in this larger group.
Unfortunately one cannot put a sufficient restriction on the composition factors of rank~3 groups
as arbitrarily large alternating and linear groups may occur. Therefore here we utilize a different approach,
namely, in the most of cases we constructively recognize a graph associated to the given rank 3 group.
For instance, the affine polar graph \( \VO^{\epsilon}_{2m}(q) \) is determined by the order \( q \) of the field,
dimension of the space \( 2m \) and a quadratic form of type \( \epsilon = \pm \), so if we have successfully
identified vertices of our graph with corresponding vectors over \( \GF(q) \) and reconstructed the associated quadratic form,
writing down the full automorphism group becomes a relatively easy task.

The structure of the paper is as follows. In Section~\ref{subsRank3} we review the description of 2-closures
of rank~3 groups and the classification of affine rank~3 groups, while in Section~\ref{subsPtools} we list the fundamental
polynomial-time algorithms used in this work. Proof of the main result is split into four parts: in Section~\ref{subsNonaff}
we deal with nonaffine rank~3 groups and in Section~\ref{subsAffcase}, consisting of Subsections~\ref{subsubsSmallcase},
\ref{subsubsTensor} and \ref{subsubsQuadratic}, we cover affine groups. At the end of each part we provide the relevant
polynomial-time procedure in terms of pseudocode (Algorithms~\ref{nonaffAlgo}--\ref{qformAlgo}).

\section{Preliminaries}\label{secPrelim}

All groups are assumed to be finite. 

Our notation is mostly standard. If \( G \leq \Sym(\Delta) \) and \( H \leq \Sym(X) \)
we write \( G \wr H \leq \Sym(\Delta \times X) \) for the imprimitive wreath product of \( G \) and \( H \),
and \( G \uparrow H \leq \Sym(\Delta^X) \) for the primitive wreath product. The full semilinear group is denoted
by \( \GamL_a(q) \), and the full affine semilinear group by \( \AGL_a(q) \).
For basic properties of 2-closures the reader is referred to~\cite{wielandt2Cl}.

In the next two sections we review some information on rank~3 groups and polynomial-time
algorithms for permutation groups.

\subsection{Rank 3 groups}\label{subsRank3}

Primitive rank~3 groups were completely classified. A primitive rank~3 permutation
group either stabilizes a nontrivial product decomposition, is almost simple or is an affine group.
The description of rank~3 groups stabilizing a nontrivial product decomposition follows
from the classification of 2-transitive almost simple groups, see Theorem~4.1~(ii)(a) and
Section~5 in~\cite{cameronFinsimp}. Almost simple rank~3 groups were determined in~\cite{bannai}
when the socle is an alternating group, in~\cite{kantor} when the socle is a classical group
and in~\cite{liebeckRank3} when the socle is an exceptional or sporadic group.
The classification of affine rank~3 groups was completed in~\cite{liebeckAffine}.

A description of 2-closures of rank~3 groups was given in~\cite{skresRank3},
the key tool being the aforementioned classification of rank~3 groups. Here we record the main statement.

\begin{theorem}[\cite{skresRank3}]\label{class}
	Let \( G \) be a rank~\( 3 \) permutation group on a set \( \Omega \)
	and suppose that \( |\Omega| > 3^{12} \).
	Then exactly one of the following is true.
	\begin{enumerate}[\normalfont(i)]
		\item \( G \) is imprimitive, i.e.\ it preserves a nontrivial decomposition
			\( \Omega \simeq \Delta \times X \).
			Then \( G^{(2)} = \Sym(\Delta) \wr \Sym(X) \).
		\item \( G \) is primitive and preserves a product decomposition
			\( \Omega \simeq \Delta^2 \) (or in other words, the Hamming graph \( H(2, |\Delta|) \)).
			Then \( G^{(2)} = \Sym(\Delta) \uparrow \Sym(2) \).
		\item \( G \) is primitive almost simple with socle \( L \), i.e.\ \( L \unlhd G \leq \Aut(L) \).
			Then \( G^{(2)} = N_{\Sym(\Omega)}(L) \), and \( G^{(2)} \) is almost simple with socle \( L \).
		\item \( G \) is a primitive affine group which does not stabilize a product decomposition.
			Then \( G^{(2)} \) is also an affine group.
			More precisely, there exist an integer \( a \geq 1 \) and a prime power~\( q \) such that \( G \leq \AGL_a(q) \),
			and exactly one of the following holds (setting \( F = \GF(q) \)).
			\begin{enumerate}[\normalfont(a)]
				\item \( G \leq \AGL_1(q) \). Then \( G^{(2)} \leq \AGL_1(q) \).
				\item \( G \leq \AGL_{2m}(q) \) preserves the bilinear forms graph \( H_q(2, m) \), \( m \geq 3 \). Then
					\( G^{(2)} \leq \AGL_{2m}(q) \) and
					\[ G^{(2)} = F^{2m} \rtimes ((\GL_2(q) \circ \GL_m(q)) \rtimes \Aut(F)). \]
				\item \( G \leq \AGL_{2m}(q) \) preserves the affine polar graph \( \VO_{2m}^{\epsilon}(q) \),
					\( m \geq 2 \), \( \epsilon = \pm \). Then \( G^{(2)} \leq \AGL_{2m}(q) \) and
					\[ G^{(2)} = F^{2m} \rtimes \operatorname{\Gamma O}^{\epsilon}_{2m}(q). \]
				\item \( G \leq \AGL_{10}(q) \) preserves the alternating forms graph \( A(5, q) \). Then
					\( G^{(2)} \leq \AGL_{10}(q) \) and
					\[ G^{(2)} = F^{10} \rtimes ((\GamL_5(q) / \{ \pm 1 \}) \times (F^{\times} / (F^{\times})^2)). \]
				\item \( G \leq \AGL_{16}(q) \) preserves the affine half spin graph \( \VD_{5,5}(q) \).
					Then \( G^{(2)} \leq \AGL_{16}(q) \) and
					\[ G^{(2)} = F^{16} \rtimes ((F^{\times} \circ \operatorname{Inndiag}(D_5(q))) \rtimes \Aut(F)). \]
				\item \( G \leq \AGL_4(q) \) preserves the Suzuki-Tits ovoid graph \( \VSz(q) \),
					\( q = 2^{2e+1} \), \( e \geq 1 \). Then
					\( G^{(2)} \leq \AGL_4(q) \) and
					\[ G^{(2)} = F^4 \rtimes ((F^{\times} \times \Sz(q)) \rtimes \Aut(F)). \]
			\end{enumerate}
	\end{enumerate}
\end{theorem}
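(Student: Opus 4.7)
The plan is to leverage the three major classifications cited above: the O'Nan--Scott theorem to split primitive rank~3 groups into types, the Bannai--Kantor--Liebeck classifications for the almost simple case, and Liebeck's classification \cite{liebeckAffine} for the affine case. The hypothesis \( |\Omega| > 3^{12} \) is there precisely to sweep away small exceptional configurations where a generic rank~3 subgroup might have a larger 2-closure than the ``expected'' one predicted by the classification (e.g.\ sporadic isomorphisms among small classical groups, or exceptional transitive extensions of affine groups).

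For case (i), since \( G \) is imprimitive of rank~3, its nondiagonal 2-orbits must be precisely ``pairs lying in a common block'' and ``pairs lying in different blocks''; any permutation preserving these two relations must preserve the block system set-wise and can act arbitrarily on each block and on the set of blocks, so the 2-closure is forced to equal \( \Sym(\Delta) \wr \Sym(X) \). Case (ii) is the same argument applied to the Hamming graph \( H(2,|\Delta|) \): the two nondiagonal 2-orbits are the edges and non-edges of \( H(2,|\Delta|) \), whose automorphism group in product action is well known to be \( \Sym(\Delta) \uparrow \Sym(2) \). In case (iii), once the classification pins down the socle \( L \) and its action, one checks for each family of \cite{bannai,kantor,liebeckRank3} that \( L \) itself is 2-transitive enough (more precisely, 2-closed up to the normaliser) so that \( G^{(2)} = N_{\Sym(\Omega)}(L) \); the tables in those papers list the full automorphism groups of the associated strongly regular graphs, and one verifies directly that these are almost simple with socle \( L \).

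The heart of the matter, and the step I expect to be the most delicate, is case (iv). Liebeck's classification \cite{liebeckAffine} provides a finite list of families of affine rank~3 groups \( G = V \rtimes G_0 \) with \( G_0 \leq \GL_a(q) \); after discarding the product-decomposition subcase already absorbed into (ii) and the one-dimensional subcase (a), the remaining families correspond to the graphs listed in (b)--(f). For each family one must (1) recognise the rank~3 graph structure on \( V = F^a \), (2) compute the full automorphism group of that graph, and (3) verify that the bound \( |\Omega| > 3^{12} \) excludes the small-dimensional coincidences. Step~(2) is where the real work lies: for the bilinear and alternating forms graphs this uses theorems of Hua/Wan-type on semilinear transformations preserving rank, for the affine polar graphs it rests on Witt's theorem giving \( \operatorname{\Gamma O}^{\epsilon}_{2m}(q) \) as the full stabiliser of the quadratic form up to the translations, for the half-spin graph one uses the classical identification of its automorphism group with \( \operatorname{Inndiag}(D_5(q)) \) extended by field automorphisms, and for the Suzuki--Tits ovoid graph one uses the known automorphism group of the Suzuki ovoid in \( \operatorname{PG}_3(q) \).

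Finally, one assembles the four cases into a single statement, using the classification to guarantee exhaustiveness and the size bound to guarantee uniqueness. The overall strategy is purely structural: recognise the graph, then quote the appropriate determination of its automorphism group; the computational exploitation of this recognition, which is what the present paper actually does in polynomial time, is then a separate problem addressed in the subsequent sections.
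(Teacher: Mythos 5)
This statement is quoted verbatim from the author's earlier paper \cite{skresRank3} and is not proved in the present work, so there is no internal proof to compare against; your outline is instead a reconstruction of the strategy of that cited source. As such it is essentially the right reconstruction: cases (i) and (ii) are indeed elementary (the block relation, respectively the Hamming graph, determines the 2-closure directly), while cases (iii) and (iv) reduce to the Bannai--Kantor--Liebeck--Saxl and Liebeck classifications together with the known automorphism groups of the associated strongly regular graphs, with the bound \( |\Omega| > 3^{12} \) disposing of the finitely many exceptional and extraspecial configurations. Your sketch is an outline rather than a proof --- the case-by-case verification that each graph's full automorphism group is exactly the group listed is the actual content of \cite{skresRank3} --- but the decomposition and the tools you name (Witt's theorem for the polar graphs, Hua/Wan-type rigidity for the forms graphs, the classification for exhaustiveness) are the ones that argument uses.
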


Construction and properties of mentioned graphs can be looked up in, for example,~\cite{srgw}, but
in what follows we will most closely work with bilinear forms graphs and affine polar graphs.
We remind that if an affine group \( G = V \rtimes G_0 \) from~(iv) preserves the bilinear forms graph \( H_q(2, m) \),
then the underlying vector space decomposes into a tensor product \( V = U \otimes W \) over \( \GF(q) \),
where \( \dim U = 2 \) and \( \dim W = m \), and \( G_0 \) preserves this decomposition. If \( G \) preserves the
affine polar graph \( \VO_{2m}^{\epsilon}(q) \), then \( V \simeq \GF(q)^{2m} \) can be endowed with a non-degenerate quadratic form
\( \kappa : V \to \GF(q) \) of type \( \epsilon \), such that \( G_0 \) acts on \( \kappa \) by semisimiliarities.

We will also require information on the structure of affine rank~3 groups.

\begin{theorem}[\cite{liebeckAffine}]\label{affrankthree}
	Let \( G \) be a finite primitive affine permutation group of rank~\( 3 \)
	and degree \( n = p^d \), with socle \( V \simeq \GF(p)^d \)
	for some prime~\( p \), and let \( G_0 \) be the stabilizer of the zero vector in \( V \).
	Then \( G_0 \) belongs to one of the following classes.
	\begin{enumerate}
		\item[\normalfont\textbf{(A)}] Infinite classes. These are:
			\begin{enumerate}[\normalfont(1)]
				\item \( G_0 \leq \GamL_1(p^d) \);
				\item \( G_0 \) is imprimitive as a linear group;
				\item \( G_0 \) stabilizes the decomposition of \( V \simeq \GF(q)^{2m} \)
					into \( V = V_1 \otimes V_2 \), where \( p^d = q^{2m} \), \( \dim V_1 = 2 \) and \( \dim V_2 = m \).
					Moreover, \( \SL(V_2) \unlhd G_0^{V_2} \) or \( p = q = 2 \), \( d = a = 8 \),
					or \( \dim V_2 \leq 3 \);
				\item \( G_0 \unrhd \SL_m(\sqrt{q}) \) and \( p^d = q^m \), where \( 2 \) divides \( \frac{d}{m} \);
				\item \( G_0 \unrhd \SL_2(\sqrt[3]{q}) \) and \( p^d = q^2 \), where \( 3 \) divides \( \frac{d}{2} \);
				\item \( G_0 \unrhd \SU_m(q) \) and \( p^d = q^{2m} \);
				\item \( G_0 \unrhd \Omega_{2m}^{\pm}(q) \) and \( p^d = q^{2m} \);
				\item \( G_0 \unrhd \SL_5(q) \) and \( p^d = q^{10} \);
				\item \( G_0 \unrhd B_3(q) \) and \( p^d = q^8 \);
				\item \( G_0 \unrhd D_5(q) \) and \( p^d = q^{16} \);
				\item \( G_0 \unrhd \operatorname{Sz}(q) \) and \( p^d = q^4 \).
			\end{enumerate}
		\item[\normalfont\textbf{(B)}] `Extraspecial' classes.
		\item[\normalfont\textbf{(C)}] `Exceptional' classes.
	\end{enumerate}
	Moreover, classes {\normalfont (B)} and {\normalfont (C)} consist of finitely many groups of degree not exceeding \( 3^{12} \).
\end{theorem}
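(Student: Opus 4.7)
The plan is to follow the strategy of reducing the classification via Aschbacher's theorem on maximal subgroups of classical groups. Since $G$ is primitive affine with elementary abelian socle $V \simeq \GF(p)^d$, the stabilizer $G_0$ is a subgroup of $\GL(V)$, and the rank~3 hypothesis translates precisely into $G_0$ having exactly two orbits on $V \setminus \{0\}$. So the task is to classify subgroups of $\GL_d(p)$ with exactly two orbits on nonzero vectors, and to show that each such subgroup either falls into one of the geometric families labelled A(1)--A(11), or belongs to a bounded finite list in classes B and C.

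First I would dispose of the reducible case. If $G_0$ fixes a proper nonzero subspace of $V$, then a short orbit-counting argument using the two-orbit property on $V \setminus \{0\}$ forces $G_0$ to preserve a direct sum decomposition $V = V_1 \oplus \cdots \oplus V_t$ with $t \geq 2$ and to permute the summands transitively. This places $G_0$ in the imprimitive class A(2), so from here on I may assume $G_0$ acts irreducibly on $V$.

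Next I would invoke Aschbacher's theorem to locate $G_0$ in one of the geometric classes $\mathcal{C}_i$ or in the almost-simple class $\mathcal{S}$, and treat them in turn. The extension-field class $\mathcal{C}_3$ in the extremal case $G_0 \leq \GamL_1(p^d)$ yields A(1), and combined with subfield reductions from $\mathcal{C}_5$ produces the items A(4) and A(5). Tensor-product stabilizers from $\mathcal{C}_4$ give A(3), verified by inspecting the two orbits of $\GL_2(q) \otimes \GL_m(q)$ on decomposable versus indecomposable tensors. The natural modules for classical groups (class $\mathcal{C}_8$) supply A(6), using the orbits of $\SU_m(q)$ on isotropic and non-isotropic vectors, and A(7), using the orbits of $\Omega_{2m}^\epsilon(q)$ on singular and non-singular vectors. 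Finally, normalizers of $r$-groups of symplectic type (class $\mathcal{C}_6$) yield the `extraspecial' family B; the fact that a faithful irreducible representation of such a group has dimension a prime power of $r$, combined with the two-orbit constraint, restricts the possibilities to finitely many small configurations.

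The main difficulty lies in the almost-simple class $\mathcal{S}$: classifying irreducible quasisimple subgroups $G_0 \leq \GL(V)$ having only two orbits on $V \setminus \{0\}$. I would go family by family through the quasisimple groups (alternating, classical of Lie type, exceptional of Lie type, sporadic). The Landazuri-Seitz lower bounds on cross-characteristic representation degrees, together with known minimal defining-characteristic representations, shrink the list of candidate representations to low-dimensional ones. For each remaining representation the two-orbit condition has to be verified or refuted by character-theoretic or geometric arguments, for instance by comparing $|G_0|$ with $|V| - 1$ times a putative suborbit length, or by exhibiting $G_0$-invariants that separate more than two orbits. What survives either assembles into the infinite families A(8)--A(11) (the exterior square of the natural $\SL_5(q)$-module, an $8$-dimensional module for $B_3(q)$, the half-spin module for $D_5(q)$, and the natural $4$-dimensional module for $\Sz(q)$), or into a finite list of small exceptional configurations forming class C, whose degrees are shown by explicit enumeration to lie below $3^{12}$. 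This representation-theoretic sweep through the $\mathcal{S}$ case is the principal obstacle of the proof.
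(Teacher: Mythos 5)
First, a point of order: the paper does not prove this statement at all. It is Liebeck's classification of affine rank~3 groups, imported from \cite{liebeckAffine} with an attribution in the theorem header, so there is no internal proof to compare against; what you have written is an outline of how the \emph{external} proof goes. As an outline it does capture the architecture of Liebeck's argument: translate the rank~3 hypothesis into ``\( G_0 \) has exactly two orbits on \( V \setminus \{0\} \)'', divide into geometric classes and an almost-simple class in the style of Aschbacher, identify the orbits explicitly in the geometric cases, and sweep through irreducible modules of quasisimple groups using lower bounds on representation degrees.

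But as a proof there is a decisive gap: essentially all of the content of the theorem lives in the case analysis you defer. The \( \mathcal{S} \)-class step (``go family by family through the quasisimple groups \dots verified or refuted by character-theoretic or geometric arguments'') is a description of a task, not an argument; it is where the bulk of a long paper's worth of work sits and where the classification of finite simple groups enters, and nothing in the sketch carries it out. Likewise, the claim that classes (B) and (C) consist of finitely many groups of degree not exceeding \( 3^{12} \) is a specific numerical output of that enumeration --- the bound is attained by an actual exceptional example --- and your sketch only asserts it. A smaller but genuine error: you ``dispose of the reducible case'' by arguing that a \( G_0 \) fixing a proper nonzero subspace is forced into class A(2). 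Primitivity of \( G = V \rtimes G_0 \) already forces \( G_0 \) to act irreducibly on \( V \), so the reducible case is vacuous; class A(2) consists of \emph{irreducible} but linearly imprimitive \( G_0 \) (Aschbacher's class \( \mathcal{C}_2 \): \( G_0 \) transitively permutes the summands of a direct-sum decomposition without stabilizing any single one). The conflation is harmless for the final statement, but the orbit-counting mechanism you invoke there is not the one that actually produces A(2).
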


The correspondence between classes of Theorem~\ref{affrankthree} and rank~3 graphs, as well as formulae for subdegrees
will be given later in Tables~\ref{graphtab} and~\ref{subtab}.

\subsection{Polynomial toolkit}\label{subsPtools}

As was mentioned earlier, both input and output permutation groups used by our algorithms are specified by their generating sets.
In fact, if \( G \) is a permutation group of degree \( n \) generated by a set of elements \( S \), one can in time polynomial
in \( n \) and \( |S| \) construct a set of generators of size at most \( n^2 \), see~\cite[Exercise~4.1]{seress}.
Therefore we can assume that \( |S| \) is bounded polynomially in terms of \( n \), and hence an algorithm
runs in polynomial time if its running time can be bounded as a polynomial in \( n \) only.

Recall that if \( \Sigma \) is a collection of finite simple groups (specified by their names),
\( O_\Sigma(G) \) is the largest normal subgroup of \( G \) such that each composition factor of \( O_\Sigma(G) \)
is isomorphic to a member of \( \Sigma \).
The following fundamental algorithms will be used in the text without further notice.

\begin{proposition}[\cite{seress}]\label{standardAlgos}
	Let \( G \leq \Sym(\Omega) \) be a permutation group given by its generating set.
	The following can be solved in polynomial time:
	\begin{enumerate}[\hspace{1em}\normalfont 1.]
		\item compute the order of \( G \),
		\item given \( g \in \Sym(\Omega) \) test whether \( g \in G \),
		\item compute the orbits of \( G \),
		\item given \( \alpha \) and \( \beta \) in the same orbit, compute \( g \in G \) such that \( \alpha^g = \beta \),
		\item given \( \alpha \in \Omega \) compute the point stabilizer \( G_\alpha \),
		\item compute a minimal block system for \( G \),
		\item compute socle of \( G \), center \( Z(G) \) and derived subgroup \( [G, G] \),
		\item compute \( O_\Sigma(G) \) for any collection of simple groups \( \Sigma \),
		\item test if \( G \) is simple, and if it is, identify its isomorphism type,
		\item enumerate all elements of \( G \) in time polynomial in \( |G| \) and \( |\Omega| \),
		\item all of the above, except items \( 3 \)--\( 6 \), in quotient groups of \( G \).
	\end{enumerate}
\end{proposition}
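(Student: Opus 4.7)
The plan is to organize the proof around a single foundational tool: the Schreier--Sims algorithm, which on input of a generating set of \( G \leq \Sym(\Omega) \) produces in polynomial time a base and strong generating set (BSGS) for \( G \). From a BSGS one immediately reads off \( |G| \) as the product of the basic orbit sizes (item~1); membership testing (item~2) is the standard sifting procedure down the stabilizer chain; a point stabilizer \( G_\alpha \) (item~5) is obtained by choosing a base starting with \( \alpha \); and enumeration (item~10) is the iteration over coset representatives at each level of the chain. A parallel breadth-first search on \( \Omega \) using the given generators, with Schreier trees recorded at each visited point, settles items~3 and~4: the orbits are the connected components and the transporter \( g \in G \) with \( \alpha^g = \beta \) is recovered by composing generator labels along the tree path from \( \alpha \) to \( \beta \).

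The combinatorial item~6 I would handle by Atkinson's block-system algorithm, which, starting from the trivial partition on \( \Omega \), repeatedly forces \( G \)-invariance under the generators and tries every candidate merge \( \{\alpha, \beta\} \) in order to extract a minimal nontrivial block system; polynomial complexity follows from the fact that every merge strictly decreases the number of classes.

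The substantive items are~7, 8 and~9, and for these I would invoke the by-now standard constructive machinery developed by Luks, Babai, Kantor, Beals and Seress. Socle and \( O_\Sigma \) computations proceed by descending through the primitive/imprimitive structure of \( G \), identifying socle factors of primitive constituents via constructive recognition of simple groups, and assembling them through appropriate normal closures; the simplicity test and isomorphism-type identification (item~9) lean on CFSG-dependent black-box recognition. The center and derived subgroup in item~7 can then be extracted from the BSGS of a suitable conjugation or commutator action on the strong generating set. Item~11 is uniform: a quotient \( G/N \) acts faithfully on a \( G \)-set of polynomial size (e.g.\ on blocks or on a carefully chosen orbit), so each item outside items~3--6 transfers routinely to the quotient. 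The main obstacle is precisely the CFSG-dependent core of items~7--9; rather than reprove it I would cite the relevant chapters of Seress's monograph, which is what the attribution of the proposition already reflects.
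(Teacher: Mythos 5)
The paper offers no proof of this proposition at all: it is stated as a black box with a citation to Seress's monograph, plus a one-line remark that item~11 rests on the Kantor--Luks paper on computing in quotient groups. Your survey of the underlying machinery for items~1--10 (Schreier--Sims and sifting for order, membership, stabilizers and enumeration; Schreier trees for orbits and transporters; Atkinson's method for block systems; the CFSG-dependent Luks--Babai--Kantor--Beals--Seress apparatus for socle, \( O_\Sigma \), simplicity testing and recognition) is accurate and consistent with what the cited source actually contains, so for those items your proposal is fine.

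There is, however, a genuine error in your treatment of item~11. You assert that a quotient \( G/N \) ``acts faithfully on a \( G \)-set of polynomial size (e.g.\ on blocks or on a carefully chosen orbit)'', so that everything ``transfers routinely''. This is false: the minimal faithful permutation degree of a quotient of a degree-\( n \) permutation group can be exponential in \( n \). A standard example is \( G \) a direct product of \( m \) copies of \( D_8 \) acting on \( 4m \) points; it has a quotient isomorphic to a central product of the \( m \) copies (an extraspecial-type group of order \( 2^{2m+1} \)) whose smallest faithful permutation representation has degree exponential in \( m \). This obstruction is precisely why the Kantor--Luks work that the paper cites for item~11 is nontrivial: quotient elements must be handled as coset representatives, with all computations carried out inside \( G \) relative to \( N \), rather than by passing to a small faithful permutation action. (It is also why items~3--6, which are intrinsically tied to the permutation domain, are excluded from item~11.) Your argument for item~11 would therefore fail as written; the correct route is to cite Kantor--Luks, as the paper does.
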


The last item is based on the work of Kantor and Luks~\cite{kantorQuot}.
For quotient group algorithms we assume that we are given generators of \( G \) and generators of its normal subgroup~\( N \).
Elements of the quotient \( G/N \) are then specified by coset representatives.

We also remark that certain permutation group constructions, such as primitive and imprimitive wreath products,
can be trivially performed in polynomial time.

Next result will be used in the treatment of almost simple groups.
\begin{lemma}[{\cite[Corollary~3.24]{luksPolyNorm}}]\label{polyNorm}
	Let \( T \leq Sym(\Omega) \) be a nonabelian simple group.
	Then \( N_{Sym(\Omega)}(T) \) can be found in polynomial time.
\end{lemma}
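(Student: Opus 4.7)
The plan is to reduce the problem to a single transitive orbit, handle the centralizer of $T$ directly, and then realize the remaining quotient inside $\Aut(T)$, which is polylogarithmically small by the classification of finite simple groups.

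First, compute the $T$-orbits $\Delta_1, \ldots, \Delta_r$ on $\Omega$ via item~3 of Proposition~\ref{standardAlgos}. Any element of $N := N_{\Sym(\Omega)}(T)$ permutes the $\Delta_i$'s, and may swap $\Delta_i$ with $\Delta_j$ only when the $T$-actions on them are equivalent up to an automorphism of $T$. Grouping orbits into $\Aut(T)$-equivalence classes reduces the task to computing $N$ on each class and then assembling the pieces by a wreath-product-type construction realizing admissible permutations within each class. So we may assume that $T$ is transitive on $\Omega$.

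Fix $\alpha \in \Omega$, compute $T_\alpha$ and $N_T(T_\alpha)$ using Proposition~\ref{standardAlgos}, and identify $\Omega$ with the coset space $T/T_\alpha$. The centralizer $C := C_{\Sym(\Omega)}(T)$ is semiregular of order $[N_T(T_\alpha):T_\alpha]$ and is generated explicitly by right multiplication on $T/T_\alpha$ by generators of $N_T(T_\alpha)/T_\alpha$. The quotient $N/C$ injects into $\Aut(T)$ as the subgroup $A$ of automorphisms preserving the $T$-conjugacy class of $T_\alpha$: any such $\sigma$, after composition with a suitable inner automorphism, stabilizes $T_\alpha$ setwise and hence descends to a well-defined permutation $xT_\alpha \mapsto \sigma(x)T_\alpha$ normalizing $T$. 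Thus $N$ is generated by $T$ (which covers the inner part of $A$), by $C$, and by one lift to $\Sym(\Omega)$ of each generator of $A$ modulo $\operatorname{Inn}(T)$.

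It remains to construct generators of $A/\operatorname{Inn}(T)$ and realize them on $\Omega$. Identify the isomorphism type of $T$ by item~9 of Proposition~\ref{standardAlgos}. By CFSG, $\Out(T)$ is polylogarithmic in $|T|$ and admits an explicit description for every simple type; using constructive recognition, produce an isomorphism from the given copy of $T$ to a standard model, transport back a generating set of $\Out$ of the standard model, and obtain concrete candidates $\sigma_1, \ldots, \sigma_k$ representing the nontrivial cosets of $\operatorname{Inn}(T)$ in $\Aut(T)$. For each $\sigma_i$ decide whether $\sigma_i(T_\alpha)$ is $T$-conjugate to $T_\alpha$; if so, determine a correcting inner element, combine, and write down the resulting permutation of $\Omega \cong T/T_\alpha$.

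The main obstacle is precisely this last step: concretely realizing outer automorphisms of the input copy of $T$ as permutations of $\Omega$, and performing the subgroup-conjugacy test inside the simple group $T$. Both ingredients lie in the domain of constructive recognition of finite simple groups, where classification-dependent polynomial-time algorithms have been developed in computational group theory. Granting these, all steps above run in time polynomial in $|\Omega|$, yielding the lemma.
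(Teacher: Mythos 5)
The paper offers no proof of this lemma: it is imported verbatim from Luks and Miyazaki \cite{luksPolyNorm}, so your proposal is being measured against a citation rather than an in-paper argument. Judged on its own merits, your transitive-case skeleton is the standard and correct one: \( T \) and \( C = C_{\Sym(\Omega)}(T) \) are normal in \( N = N_{\Sym(\Omega)}(T) \), conjugation embeds \( N/C \) into \( \Aut(T) \) with image the automorphisms carrying \( T_\alpha \) to a point stabilizer, \( C \simeq N_T(T_\alpha)/T_\alpha \) acts by right multiplication, and \( |\Out(T)| \) is polynomially bounded in \( |\Omega| \) by the classification. The conjugacy test is also unproblematic, since the \( T \)-conjugates of \( T_\alpha \) are exactly the \( n \) point stabilizers and equality of subgroups is decidable by membership testing.

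There are, however, two genuine gaps. First, the intransitive reduction fails as stated: an element \( g \in N \) induces a \emph{single} automorphism \( \sigma_g \) of \( T \), and its restriction to every orbit must realize that same \( \sigma_g \). Hence \( N \) is not an independent assembly of the normalizers of the transitive constituents, but the subgroup of such an assembly in which the induced elements of \( \Aut(T) \) agree across all orbits, i.e.\ a fiber (subdirect) product over \( \Aut(T) \); the sentence ``so we may assume \( T \) is transitive'' skips precisely this gluing, which is where much of the bookkeeping in \cite{luksPolyNorm} lives. Second, the decisive step --- producing an effective isomorphism to a standard copy of \( T \), transporting back generators of \( \Out(T) \), evaluating the resulting automorphisms on generators of \( T_\alpha \), and writing down the induced permutations of \( \Omega \) --- is delegated wholesale to ``constructive recognition''. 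Polynomial-time (in the degree) constructive recognition of an arbitrary simple permutation group is itself a substantial, classification-dependent theory with its own case analysis; compare Proposition~\ref{simpRecog} of this paper, which must already split on \( |G| \leq n^8 \) versus \( |G| > n^8 \) merely to invoke Kantor's theorem \cite{kantorSylow} for classical groups. Without a precise reference for that ingredient your argument does not prove the lemma; it replaces the black box \cite{luksPolyNorm} with a different, comparably heavy one.
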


Recall that the vertex set of the Hamming graph \( H(a, q) \) is the set of tuples \( (x_1, \dots, x_a) \)
where components \( x_i \) are taken from some finite set of cardinality \( q \), and two vertices are adjacent
if they differ in exactly one place. This labelling of vertices is called a \emph{Hamming labelling},
and the next result provides a polynomial algorithm for finding such a labelling.

\begin{lemma}[\cite{imrichHamming}]\label{hammAlgo}
	Given a graph (specified by a list of vertices and a list of edges), one can decide in polynomial time
	if it is a Hamming graph, and find an appropriate Hamming labelling if that is the case.
\end{lemma}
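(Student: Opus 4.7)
The plan is to exploit the fact that $H(a,q)$ is the $a$-fold Cartesian product of complete graphs, $H(a,q) = K_q \square K_q \square \cdots \square K_q$, so the recognition problem reduces to decomposing the input graph into Cartesian factors and verifying that each factor is isomorphic to the same $K_q$. A Hamming labelling is then read off from the factorization by choosing a base vertex and identifying the $a$ factors with the $a$ coordinate directions.

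Concretely, I would proceed as follows. First, compute $n = |V|$ and the common vertex degree $k$, and check that $n = q^a$ and $k = a(q-1)$ for some integers $a \geq 1$, $q \geq 2$, rejecting otherwise. Second, assuming $q \geq 3$, enumerate maximal cliques: a direct argument shows that in $H(a,q)$ every edge lies in a unique maximal clique of size exactly $q$ (the clique obtained by varying one coordinate), so these cliques partition the edge set, and the partition can be produced in polynomial time by greedily extending every edge. Third, define an equivalence relation on maximal cliques by declaring $C_1$ and $C_2$ to be \emph{parallel} whenever they are disjoint and every vertex of $C_1$ has exactly one neighbor in $C_2$; the resulting equivalence classes should correspond to the $a$ coordinate directions. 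Fourth, pick a base vertex $v_0$, label it $(0, \dots, 0)$, fix an arbitrary bijection from each of the $a$ cliques through $v_0$ to $\{0, 1, \dots, q-1\}$, and propagate labels along shortest paths from $v_0$. Finally, verify that the resulting labelling satisfies the Hamming adjacency condition on every edge and accept if and only if it does.

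The case $q = 2$ requires a minor modification, since maximal cliques in the hypercube $Q_a$ are just edges: the analogous relation is to call two edges parallel whenever they are opposite sides of a $4$-cycle, and then take the transitive closure. The main obstacle is combinatorial correctness rather than complexity: one must show that the parallelism relation on cliques (resp.\ edges) is an equivalence relation with exactly $a$ classes, and that label propagation is independent of the chosen path. These facts rest on the \emph{square property} of Cartesian-product graphs, namely that any two adjacent edges belonging to different factors lie in a unique induced $4$-cycle, a property which fails in generic strongly regular graphs and thus acts as the essential certificate of the Hamming structure. Once this is established, the polynomial time bound follows immediately from the straightforward enumeration of vertices, cliques, and $4$-cycles.
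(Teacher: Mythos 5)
The paper does not prove this lemma at all: it is quoted verbatim from Imrich and Klav\v{z}ar's recognition algorithm for Hamming graphs, so there is no internal proof to compare against. Your sketch is nevertheless a faithful reconstruction of the ideas underlying that algorithm (Cartesian factorization into copies of \( K_q \), parallel classes of cliques/edges, the square property as the certificate of product structure), and since the paper only needs a polynomial bound rather than the linear one of the cited source, your cruder enumeration of cliques and \( 4 \)-cycles is entirely adequate. The overall logic is also sound in the decision sense: every intermediate failure leads to rejection, and the final relabel-and-verify step prevents false acceptance.

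Two points would need tightening in a self-contained write-up. First, the relation you define on maximal cliques for \( q \geq 3 \) (disjoint, with every vertex of \( C_1 \) having exactly one neighbour in \( C_2 \)) is not itself an equivalence relation even on a genuine Hamming graph: it only relates same-direction lines whose base points are at Hamming distance one, so you must take the reflexive-symmetric-transitive closure here as well, not only in the \( q = 2 \) case; one then has to check (as your analysis of cross-direction lines implicitly requires) that the closure never merges two distinct coordinate directions, which does hold because a line in direction \( i \) never has a perfect matching into a disjoint line in direction \( j \neq i \) when \( q \geq 3 \). Second, verifying ``the Hamming adjacency condition on every edge'' only shows that the edge set is contained in that of \( H(a,q) \); to conclude equality you should also use the degree check \( k = a(q-1) \) (or verify bijectivity of the labelling and count edges), since otherwise a proper spanning subgraph could pass. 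With these repairs the argument is complete, and the path-independence of label propagation that you flag as the remaining obstacle is exactly the square property of Cartesian products that the cited reference establishes.
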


Affine groups of rank~3 will require certain algorithms for linear groups. If \( G = V \rtimes G_0 \) is a primitive affine group,
observe that \( V \) is the socle and \( G_0 \) is point stabilizer and thus they can be computed in polynomial time.
Since \( |V| = n \) we can avoid most of the theory of effective algorithms for matrix groups, for instance,
we can represent subspaces as lists of vectors, and compute their sums and intersections by straightforward procedures.
One can easily test if \( G_0 \) acts irreducibly on \( V \), indeed, it suffices to check that for every nonzero vector
\( v \in V \) the orbit \( v^{G_0} \) spans \( V \), while the general procedure for matrix groups is highly
nontrivial, see~\cite[Corollary~5.4]{ronyaiFA} and~\cite{parkerMeat}.

Nevertheless we will require the following two matrix groups algorithms.
Note that algorithms run in polynomial time in the size of the input, in particular,
the running time may depend on the size of the matrices involved and the order of the field.
Linear groups are given by their generating matrices, and since all our linear groups
will arise from permutation groups, all parameters (matrix size, order of the field, number of generators, etc.)\ will be under control.

\begin{lemma}[{\cite[Theorem~2]{chistovConj}}]\label{conjalgo}
	Let \( g_1, \dots, g_k \) and \( h_1, \dots, h_k \) be elements of \( \GL_d(p) \), where \( p \) is a prime.
	Then it can be checked in polynomial time whether there
	exists an element \( t \in \GL_d(p) \) such that \( g_i^t = h_i \), \( i = 1, \dots, k \),
	and such an element can be computed if it exists.
\end{lemma}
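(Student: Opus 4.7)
The plan is to recast the simultaneous conjugacy problem as the search for an invertible element in a computable linear subspace, and then to solve that via isomorphism of modules over a finite-dimensional $\GF(p)$-algebra. The equation $g_i^t = h_i$ rewrites as the homogeneous linear relation $g_i t = t h_i$ in the entries of $t$, so the set
\[ L = \{ t \in M_d(\GF(p)) : g_i t = t h_i \text{ for } i = 1, \dots, k \} \]
is a subspace of $M_d(\GF(p))$ cut out by at most $k d^2$ linear conditions, hence computable by Gaussian elimination in time polynomial in $d$, $k$ and $\log p$. The question now becomes whether $L$ contains an invertible matrix, and if so, how to exhibit one.

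Interpret $V = \GF(p)^d$ as a module over the free algebra $\GF(p)\langle x_1, \dots, x_k \rangle$ in two ways: $x_i$ acts as $g_i$ on $V_g$ and as $h_i$ on $V_h$. Then $L$ is precisely the space of module homomorphisms $V_h \to V_g$, and an invertible element of $L$ is an isomorphism of these two modules. The two actions factor through the $\GF(p)$-subalgebra $B$ of $M_d(\GF(p)) \times M_d(\GF(p))$ generated by the pairs $(g_i, h_i)$, which has dimension at most $2d^2$ and is constructible by straightforward matrix multiplication. I would then invoke the standard polynomial-time pipeline for module isomorphism over a finite-dimensional algebra over a finite field: compute the Jacobson radical of $B$ via Dickson's algorithm, decompose the semisimple quotient $B/J(B)$ into its simple factors by central idempotents and polynomial factorization over $\GF(p)$, lift a complete orthogonal system of primitive idempotents back to $B$, and use those idempotents to split $V_g$ and $V_h$ into indecomposable direct summands. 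The two modules are isomorphic if and only if the multisets of isomorphism types of indecomposable summands coincide, and in that case an intertwiner $t$ is assembled blockwise from local isomorphisms.

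The main obstacle is the matching of indecomposable summands, since indecomposable modules over a non-semisimple algebra can be genuinely wild and resist intrinsic classification. Crucially, however, we only need to compare two specific modules of dimension $d$: by Fitting's lemma, a homomorphism between two local indecomposables with the same projective cover is invertible as soon as it is nonzero modulo the radical, so each local matching reduces to a non-vanishing check for a determinantal condition on a subspace of the already-computed space $L$, which is again pure linear algebra over $\GF(p)$. The remaining subtlety is that the residue fields $\GF(p^{f_i})$ appearing in the Wedderburn decomposition of $B/J(B)$ may grow, but the bound $\sum_i n_i^2 f_i \leq \dim_{\GF(p)} B \leq 2d^2$ keeps all arithmetic polynomial in $d$ and $\log p$.
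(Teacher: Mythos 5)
The paper offers no proof of this lemma at all: it is imported verbatim as Theorem~2 of Chistov--Ivanyos--Karpinski, so what you have written is a reconstruction of the cited result rather than an alternative to anything in the text. Your opening reduction is exactly the standard (and the cited) one: the intertwiner space \( L = \{ t : g_i t = t h_i \} \) is computable by Gaussian elimination, and an invertible element of \( L \) is the same thing as an isomorphism between the two \( B \)-module structures on \( \GF(p)^d \), where \( B \) is the algebra generated by the pairs \( (g_i,h_i) \). Up to that point the proposal is sound.

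The genuine misstep is in the sentence ``lift a complete orthogonal system of primitive idempotents back to \( B \), and use those idempotents to split \( V_g \) and \( V_h \) into indecomposable direct summands.'' A primitive idempotent \( e \) of \( B \) is not central in general, so \( eV \) is not a \( B \)-submodule of \( V \), and the sets \( e_jV \) do not give a direct sum decomposition of \( V_g \) or \( V_h \) as \( B \)-modules; primitive idempotents of \( B \) decompose the regular module \( {}_BB \) into projective indecomposables, which is a different object. The decomposition of \( V_g \oplus V_h \) into indecomposable summands is governed by the primitive idempotents of the endomorphism algebra \( E = \operatorname{End}_B(V_g \oplus V_h) \) (itself computable by linear algebra, with radical and idempotent lifting available as you describe for \( B \)), and the isomorphism test becomes the question of whether the two projections \( \pi_g, \pi_h \in E \) are equivalent idempotents, which does reduce modulo \( J(E) \) to rank comparisons in the simple components of \( E/J(E) \) and yields the intertwiner constructively. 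Your Fitting-lemma remark is the right idea but is aimed at the wrong algebra. Two smaller points: the claim that everything is polynomial in \( \log p \) is an overclaim for the deterministic Wedderburn/factorization steps (they are polynomial in \( p \)), though this is harmless here since the paper measures running time in \( n = p^d \geq p \); and the phrase ``same projective cover'' is not the relevant criterion for matching indecomposable summands.
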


\begin{lemma}[\cite{ronyai}]\label{ronyaiAlgo}
	Given \( G_0 \leq \GL_d(p) \), where \( p \) is a prime, the centralizer \( C_{\GL_d(p)}(G_0) \) can be computed in polynomial time.
\end{lemma}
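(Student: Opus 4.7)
The plan is to reduce the problem to computing the unit group of a finite-dimensional associative $\GF(p)$-algebra, namely the centralizer algebra
\[ \mathcal{C} = C_{M_d(\GF(p))}(G_0) = \{ A \in M_d(\GF(p)) : Ag = gA \text{ for all } g \in G_0 \}, \]
since $C_{\GL_d(p)}(G_0) = \mathcal{C}^{\times}$. Computing a $\GF(p)$-basis of $\mathcal{C}$ is easy: if $g_1, \dots, g_k$ generate $G_0$, the conditions $Ag_i = g_i A$ form a homogeneous linear system in the $d^2$ entries of $A$, solvable by Gaussian elimination in polynomial time.

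Next I would apply the structure theory of finite-dimensional associative algebras. Compute the Jacobson radical $J = J(\mathcal{C})$ and decompose the semisimple quotient $\mathcal{C}/J$ explicitly as $\bigoplus_{i=1}^{r} M_{n_i}(\GF(p^{e_i}))$ via Wedderburn. For each simple factor $M_{n_i}(\GF(p^{e_i}))$ a small explicit generating set of $\GL_{n_i}(\GF(p^{e_i}))$ is known, for instance a scalar matrix given by a generator of $\GF(p^{e_i})^{\times}$, a transvection, and a permutation matrix. Pulling these back through the Wedderburn isomorphism and combining them with $\{ 1 + b : b \text{ ranges over a basis of } J \}$ produces a generating set of $\mathcal{C}^{\times}$: the pullbacks surject onto $(\mathcal{C}/J)^{\times}$, while the second family generates the unipotent kernel $1 + J$, which is a finite $p$-group because $J$ is nilpotent.

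The main obstacle is the algorithmic Wedderburn decomposition over $\GF(p)$, which is precisely the content of R\'onyai's work. The radical can be computed in polynomial time from linear equations built out of the trace form, while splitting the semisimple quotient into simple components reduces to finding central idempotents and hence, ultimately, to factoring polynomials over $\GF(p)$, a task accomplished in polynomial time (in $d$ and $p$, as is permitted by our conventions for matrix group algorithms). Once the explicit decomposition is in hand, assembling generators of $\mathcal{C}^{\times}$ is routine, and any redundancy in the resulting generating set can be trimmed to polynomial size.
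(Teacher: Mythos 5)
The paper gives no proof of this lemma at all: it is quoted verbatim from R\'onyai's paper, so there is nothing to compare line by line. Your sketch is a correct reconstruction of what that reference actually does: reduce to the centralizer algebra \( \mathcal{C} \) by linear algebra, note \( C_{\GL_d(p)}(G_0)=\mathcal{C}^{\times} \), and extract generators of the unit group from the radical filtration and the Wedderburn decomposition of \( \mathcal{C}/J \), all of which is available in polynomial time over \( \GF(p) \) (polynomial dependence on \( p \) being explicitly allowed by the paper's conventions for matrix-group algorithms). Two small points deserve more care than you give them. First, in characteristic \( p \) the Jacobson radical is \emph{not} in general the radical of the plain trace form; the polynomial-time computation of \( J \) over finite fields (Friedl--R\'onyai) uses an iterated refinement with modified trace functions, so your parenthetical justification is slightly off even though the conclusion you need is correct. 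Second, the claim that \( \{1+b\} \) for \( b \) ranging over an arbitrary basis of \( J \) generates \( 1+J \) needs an argument; the clean version takes a basis adapted to the filtration \( J\supseteq J^2\supseteq\cdots \) (also computable), for which a downward induction on \( k \) shows \( 1+J^k\subseteq H\,(1+J^{k+1}) \) and hence \( H\supseteq 1+J \). Neither point is a genuine gap in the approach, and the overall argument is sound.
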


\section{Proof of the main result}\label{secProof}

Set \( n = |\Omega| \).  If \( n \leq 3^{12} \), we can find the 2-closure by brute-force,
so from now on we assume that \( n > 3^{12} \). In the following we will often leave out
the condition \( n > 3^{12} \) and write that ``\( n \) is sufficiently large'' instead.

By Theorem~\ref{class}, rank~3 groups fall into four categories: imprimitive groups,
primitive preserving a nontrivial product decomposition, almost simple and affine.
In the next section we provide an algorithm covering the first three cases (Algorithm~\ref{nonaffAlgo}),
while affine groups are subdivided into the ``small'' case (Algorithm~\ref{smallAlgo},
tensor product case (Algorithm~\ref{tensorAlgo}) and quadratic form case (Algorithm~\ref{qformAlgo}).
The main algorithm consists of running Algorithms~\ref{nonaffAlgo} through~\ref{qformAlgo}
and taking the largest output as the final result. Indeed, our group is guaranteed to lie in one
of the four cases mentioned above, and as we will prove below, each algorithm computes the 2-closure
correctly for the respective class.

In the following sections we detail the descriptions of the mentioned classes and provide required algorithms.

\subsection{Nonaffine case}\label{subsNonaff}

In this section we consider imprimitive rank~3 groups, groups preserving a nontrivial product decomposition
and almost simple groups. Notice that despite the name of the section, some affine groups may fall in this case,
for example, affine groups stabilizing a nontrivial product decomposition (i.e.\ linearly imprimitive affine groups).

\begin{proposition}\label{imprimCasePoly}
	It can be tested in polynomial time if \( G \) is an imprimitive rank~\( 3 \) group, and if it is,
	\( G^{(2)} \) can be found in polynomial time.
\end{proposition}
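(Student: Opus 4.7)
The plan is to reduce the proposition to two polynomial-time ingredients already listed in Section~\ref{subsPtools}: the computation of a minimal block system and the construction of an imprimitive wreath product. First I would observe that a rank~\( 3 \) group is automatically transitive, since if it had \( k \geq 2 \) orbits on \( \Omega \) then the product partition of \( \Omega \times \Omega \) would already give at least \( k^2 \geq 4 \) invariant sets. Thus it is safe to invoke the standard transitive-group toolkit on \( G \).

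Next, I would compute a minimal block system \( \mathcal{B} \) of \( G \) using item~6 of Proposition~\ref{standardAlgos}. If \( \mathcal{B} \) is trivial then \( G \) is primitive, so the imprimitive case does not apply and the procedure returns ``not imprimitive''. Otherwise let \( s = |B| \) for some \( B \in \mathcal{B} \) and \( t = |\mathcal{B}| \); this yields an identification \( \Omega \simeq \Delta \times X \) with \( |\Delta| = s \) and \( |X| = t \). By Theorem~\ref{class}(i), under the standing assumption \( n > 3^{12} \), the two-closure is then forced to be \( G^{(2)} = \Sym(\Delta) \wr \Sym(X) \), whose generators can be written down directly on \( \Omega \) in polynomial time.

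The only conceptual point worth checking is that the minimal block system produced by the algorithm really does match the decomposition appearing in Theorem~\ref{class}(i), rather than some coarser or finer one. This is automatic for rank~\( 3 \) imprimitive groups: one of the two non-diagonal \( 2 \)-orbits, together with the diagonal, forms the unique nontrivial \( G \)-invariant equivalence relation on \( \Omega \), so \( G \) has a unique nontrivial block system, which therefore coincides with the minimal one computed.

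Honestly, there is no serious obstacle in this proposition; it is essentially a one-line translation of Theorem~\ref{class}(i) into an algorithm, with correctness guaranteed by the classification and running time bounded by that of the block-system subroutine. The substantive algorithmic work of the paper is deferred to the almost simple and affine cases treated in the subsequent sections.
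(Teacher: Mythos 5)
Your proposal is correct and matches the paper's proof: both compute a nontrivial (minimal) block system in polynomial time, identify \( \Omega \simeq \Delta \times X \), and invoke Theorem~\ref{class}(i) to output \( \Sym(\Delta) \wr \Sym(X) \). The extra observations on transitivity and uniqueness of the block system are valid but not needed beyond what the cited theorem already guarantees.
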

\begin{proof}
	If \( \Delta \) is a nontrivial block of imprimitivity of \( G \), and \( \Omega \) is identified
	with \( \Delta \times X \) for some set \( X \), then \( G^{(2)} = \Sym(\Delta) \wr \Sym(X) \)
	by Theorem~\ref{class}~(i). A nontrivial block system of a permutation group can be found in polynomial time,
	so generators of the required wreath product can be easily obtained. The claim is proved.
\end{proof}

\begin{proposition}\label{prodPoly}
	It can be tested in polynomial time if \( G \) is a primitive rank~\( 3 \) permutation group
	preserving a nontrivial product decomposition, and if it is, \( G^{(2)} \) can be found in polynomial time.
\end{proposition}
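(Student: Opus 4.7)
The plan is to recognize $\Omega$ as the vertex set of a Hamming graph $H(2,d)$ with $d=\sqrt{n}$. Indeed, by Theorem~\ref{class}(ii), whenever $G$ is primitive rank~3 preserving a product decomposition $\Omega\simeq\Delta^2$, we have $G^{(2)}=\Sym(\Delta)\uparrow\Sym(2)$, which is precisely $\Aut H(2,|\Delta|)$. Once such a Hamming labeling is in hand, writing down generators of $G^{(2)}$ is immediate, so the task reduces to extracting the graph from the $2$-orbit structure of $G$ and invoking the recognition algorithm of Lemma~\ref{hammAlgo}.

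Concretely, I first compute the orbits of $G$ on $\Omega\times\Omega$: since $|\Omega\times\Omega|=n^2$, this falls under Proposition~\ref{standardAlgos} as an orbit computation on a polynomially-large permutation domain. If the number of $2$-orbits is not three, the test reports failure. Otherwise, let $R_1,R_2$ denote the two non-diagonal $2$-orbits, and for each form the undirected graph $\Gamma_i$ on $\Omega$ with edge set $R_i\cup R_i^{*}$, where $R_i^{*}$ is the transposed relation. Then I apply Lemma~\ref{hammAlgo} to each of $\Gamma_1$ and $\Gamma_2$ to decide whether it is a Hamming graph and, if so, obtain a Hamming labeling.

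In the positive case, one of the $\Gamma_i$ is the rook graph $H(2,d)$, so Lemma~\ref{hammAlgo} returns an explicit bijection $\phi\colon\Omega\to\Delta\times\Delta$ with $|\Delta|=d$. I verify that the recovered Hamming parameters satisfy $a=2$ and $q^2=n$ (automatic from the rank~$3$ assumption on $G$, since a rank~$a+1$ automorphism group of $H(a,q)$ could not be rank~$3$ otherwise, but this serves as a clean failure criterion when $G$ is not of the claimed type). Transporting the canonical generators of $\Sym(\Delta)\uparrow\Sym(2)$ through $\phi^{-1}$ yields generators of $G^{(2)}$; the primitive wreath product is constructible in polynomial time as noted at the end of Section~\ref{subsPtools}.

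No step presents real difficulty: the orbit computation on $\Omega^2$ is polynomial because the enlarged domain has polynomial size, the nontrivial combinatorial recognition is handled by Lemma~\ref{hammAlgo}, and the final generator transfer is mechanical. The only mild care required is to try both non-diagonal $2$-orbits, as we do not know \emph{a priori} which carries the rook-graph structure versus its complement.
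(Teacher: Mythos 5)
Your proposal is correct and follows essentially the same route as the paper: recognize one of the non-diagonal $2$-orbits as the Hamming graph $H(2,\sqrt{n})$ via Lemma~\ref{hammAlgo} and then write down $\Sym(\Delta)\uparrow\Sym(2)$ from the recovered labelling. The only (immaterial) difference is that you test both non-diagonal $2$-orbits, whereas the paper notes that for large $n$ the subdegrees $2(\sqrt{n}-1)$ and $(\sqrt{n}-1)^2$ differ, so the smallest one is the edge relation of the Hamming graph.
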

\begin{proof}
	Suppose that \( G \) does preserve a nontrivial product decomposition, in particular,
	its subdegrees are \( 2(\sqrt{n}-1) \) and \( (\sqrt{n}-1)^2 \) (these are subdegrees
	of the Hamming graph \( H(2, \sqrt{n}) \)).
	Since the degree of \( G \) is large enough, we can assume that subdegrees of \( G \)
	have different sizes. The smallest nondiagonal 2-orbit induces a Hamming graph on \( \Omega \),
	and Hamming graphs can be recognized and appropriately relabeled in polynomial time
	by Lemma~\ref{hammAlgo}, allowing one to reconstruct the product decomposition \( \Omega \simeq \Delta^2 \)
	preserved by \( G \). We have \( G^{(2)} = \Sym(\Delta) \uparrow \Sym(2) \), and we can obtain
	generators of that wreath product in polynomial time.
\end{proof}

\begin{proposition}\label{affalmsimp}
	It can be tested in polynomial time if \( G \) is a primitive rank~\( 3 \) permutation group
	with nonabelian simple socle, and if it is, \( G^{(2)} \) can be found in polynomial time.
\end{proposition}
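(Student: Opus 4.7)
The plan is to reduce the problem to a direct application of Theorem~\ref{class}(iii), which identifies the 2-closure of a primitive rank~3 almost simple group with the normalizer of its socle in \( \Sym(\Omega) \). Concretely, I would first compute the socle \( L \) of \( G \) in polynomial time via item~7 of Proposition~\ref{standardAlgos}, and then test whether \( L \) is a nonabelian simple group: item~9 of Proposition~\ref{standardAlgos} decides simplicity of a permutation group and names its isomorphism type, while nonabelianness reduces to checking \( [L, L] \neq 1 \) (or equivalently that \( L \) is not cyclic of prime order). If this test fails, the algorithm halts without output.

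Next I would verify primitivity of \( G \) by computing a minimal block system via item~6 of Proposition~\ref{standardAlgos}. Once \( G \) is known to be primitive with nonabelian simple socle \( L \), a standard O'Nan--Scott-type argument forces \( G \) to be almost simple: \( C_G(L) \) is normal in \( G \) with \( C_G(L) \cap L = Z(L) = 1 \), so \( C_G(L) \) is a normal subgroup disjoint from the socle, which primitivity rules out, whence \( C_G(L) = 1 \) and \( G \hookrightarrow \Aut(L) \). Thus we are in case~(iii) of Theorem~\ref{class} and may invoke the formula \( G^{(2)} = N_{\Sym(\Omega)}(L) \).

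The final step is to compute \( N = N_{\Sym(\Omega)}(L) \) using Lemma~\ref{polyNorm} and return \( N \); correctness then follows from Theorem~\ref{class}(iii), and each individual step runs in polynomial time by the cited results. The main potential obstacle is precisely this normalizer computation: for an arbitrary subgroup \( L \), computing \( N_{\Sym(\Omega)}(L) \) is not known to be feasible in polynomial time, but Luks's Lemma~\ref{polyNorm} covers exactly the case of a nonabelian simple \( L \), which is all we need. Everything else is bookkeeping on top of the standard polynomial toolkit from Proposition~\ref{standardAlgos}.
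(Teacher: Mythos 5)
Your proposal is correct and follows essentially the same route as the paper: compute the socle, verify it is nonabelian simple, invoke Theorem~\ref{class}(iii) to identify \( G^{(2)} \) with \( N_{\Sym(\Omega)}(L) \), and compute that normalizer via Lemma~\ref{polyNorm}. The extra primitivity check and the centralizer argument showing \( G \) is almost simple are harmless additional detail that the paper leaves implicit.
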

\begin{proof}
	One can compute the socle \( S \) and check that it is simple in polynomial time.
	By Theorem~\ref{class}, we have \( G^{(2)} = N_{\Sym(\Omega)}(S) \). The claim now follows from Lemma~\ref{polyNorm}.
\end{proof}

We summarize the algorithm for the nonaffine case in the following figure.
\medskip

\begin{algorithm}[H]
	\SetAlgoLined
	\KwIn{generators of a rank~3 group \( G \leq \Sym(\Omega) \) of sufficiently large degree}
	\KwOut{generators of \( G^{(2)} \) if \( G \) is imprimitive or primitive stabilizing a nontrivial product decomposition
	or almost simple, and failure otherwise}
	\medskip

	\If{\( G \) is imprimitive}{
		Find a nontrivial decomposition \( \Omega \simeq \Delta \times X \)\;
		\Return{\( \Sym(\Delta) \wr \Sym(X) \)}
		}
	\smallskip

	Set \( \Gamma \) to be the smallest 2-orbit of \( G \)\;
	\If(\tcp*[f]{Lemma~\ref{hammAlgo}}){\( \Gamma \) is a Hamming graph}{
		Find a decomposition \( \Omega \simeq \Delta^2 \)\;
		\Return{\( \Sym(\Delta) \uparrow \Sym(2) \)}
		}
	\smallskip

	Compute the socle \( S \) of \( G \)\;
	\If{\( S \) is a nonabelian simple group}{
		\Return{\( N_{\Sym(\Omega)}(S)\)}
		\tcp*[f]{Lemma~\ref{polyNorm}}
		}
	\smallskip

	\tcp{If the group is affine}
	\Return{failure}
	\caption{Nonaffine\label{nonaffAlgo}}
\end{algorithm}

\subsection{Affine case}\label{subsAffcase}

Recall that if \( G \) is an affine rank~3 group, then \( G \leq \AGL_a(q) \) and \( G \) decomposes as \( G = V \rtimes G_0 \),
where \( V \) is an \( a \)-dimensional vector space over \( \GF(q) \), while \( G_0 \leq \GamL_a(q) \).
The smallest value of \( a \) such that \( G \leq \AGL_a(q) \) for some \( q \) will be denoted by~\( a(G) \).
We continue to use our standard notation \( n = |\Omega| = |V| \). Note also that \( n = p^d = q^a \),
where \( q \) is a power of a prime \( p \) and \( a \) divides \( d \).

By Theorem~\ref{affrankthree} affine groups of rank~\( 3 \) fall into three categories (A), (B) and (C),
where the latter two encompass only a finite number of groups and are not considered since \( n > 3^{12} \).
Also notice that groups from class (A2) stabilize a nontrivial product decomposition,
and so they were covered in the previous section. 

Finally, structure of affine rank~3 groups, their subdegrees, graphs preserved and values of \( a(G) \)
can be easily read of~\cite[Tables~1 and~3]{skresRank3}. Here we provide a summary for reader's convenience.
The meaning of the column ``Class'' is explained right after the tables.

\begin{table}[H]
\centering
\caption{Linearly primitive rank 3 group in class (A)}\label{graphtab}
\begin{tabular}{l l l l l}
	\hline
	       & Type of \( G \) & a(G)           & Graph & Class \\
	\hline
	(A1):  & \( G_0 < \Gamma L_1(p^d) \) & \( 1 \) & One-dimensional affine & (S)\\
	(A3):  & tensor product & \( 2m \) & \( H_q(2, m) \) & (T)\\
	(A4):  & \( G_0 \unrhd \SL_m(\sqrt{q}) \) & \( m \) & \( H_{\sqrt{q}}(2, m) \) & (T)\\
	(A5):  & \( G_0 \unrhd \SL_2(\sqrt[3]{q}) \) & \( 2 \) & \( H_{\sqrt[3]{q}}(2, 3) \) & (T)\\
	(A6):  & \( G_0 \unrhd \SU_m(q) \) & \( m \) & \( \VO^{\epsilon}_{2m}(q) \), \( \epsilon = (-1)^m \) & (Q)\\
	(A7):  & \( G_0 \unrhd \Omega^{\epsilon}_{2m}(q) \) & \( 2m \) & \( \VO^{\epsilon}_{2m}(q) \) & (Q)\\
	(A8):  & \( G_0 \unrhd \SL_5(q) \) & \( 10 \) & \( A(5, q) \) & (S)\\
	(A9):  & \( G_0 \unrhd B_3(q) \) & \( 8 \) & \( \VO^+_8(q) \) & (S)\\
	(A10): & \( G_0 \unrhd D_5(q) \) & \( 16 \) & \( \VD_{5,5}(q) \) & (S)\\
	(A11): & \( G_0 \unrhd \operatorname{Sz}(q) \) & \( 4 \) & \( \operatorname{VSz}(q) \) & (S)\\
	\hline
\end{tabular}
\end{table}

\begin{table}[h]
\centering
\caption{Subdegrees of some groups in class (A)}\label{subtab}
\begin{tabular}{l l c}
	\hline
	       & \( n = p^d \) & Subdegrees \\
	\hline
	(A3):  & \( q^{2m} \) & \( (q+1)(q^m - 1) \), \( q(q^m - 1)(q^{m-1} - 1) \)\\
	(A4):  & \( q^m \) & \( (\sqrt{q}+1)(\sqrt{q}^m - 1) \), \( \sqrt{q}(\sqrt{q}^m - 1)(\sqrt{q}^{m-1} - 1) \)\\
	(A5):  & \( q^2 \) & \( (\sqrt[3]{q}+1)(q - 1) \), \( \sqrt[3]{q}(q - 1)(\sqrt[3]{q}^2 - 1) \)\\
	(A6):  & \( q^{2m} \) & \(
	       \begin{cases}
		       (q^m - 1)(q^{m-1} + 1), q^{m-1}(q-1)(q^m - 1), \, m \text{ even}\\
		       (q^m + 1)(q^{m-1} - 1), q^{m-1}(q-1)(q^m + 1), \, m \text{ odd} 
	       \end{cases} \)\\
	(A7):  & \( q^{2m} \) & \(
	       \begin{cases}
		       (q^m - 1)(q^{m-1} + 1), q^{m-1}(q-1)(q^m - 1), \, \epsilon = + \\
		       (q^m + 1)(q^{m-1} - 1), q^{m-1}(q-1)(q^m + 1), \, \epsilon = - 
	       \end{cases} \)\\
	\hline
\end{tabular}
\end{table}

For the algorithmic considerations to follow it is more useful to regroup linearly primitive
affine rank~3 groups into the following three families:
\begin{enumerate}
	\item[\textbf{(S)}] Small groups, that is, groups with \( a(G) \leq 16 \) and
		such that the zero stabilizer \( G_0 \) is 4-generated.
	\item[\textbf{(T)}] Tensor case, that is, classes (A3), (A4) and (A5).
		These groups stabilize a nontrivial tensor decomposition.
	\item[\textbf{(Q)}] Quadratic form case, that is, classes (A6) and (A7). These groups preserve a quadratic form.
\end{enumerate}

Classes (A3)--(A7) fall into cases (T) and (Q) by definition, 
so it is left to sort out classes (A1) and (A8)--(A11). We need the following observation.

\begin{lemma}[{\cite[Corollary to Theorem~1]{voltaGens}}]\label{almsimpGens}
	Let \( G \) be an almost simple group with socle \( S \).
	Then \( G/S \) and \( G \) are 3-generated.
\end{lemma}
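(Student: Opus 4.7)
The plan has two parts, corresponding to the two assertions, and both rely on the classification of finite simple groups (CFSG).

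For the first assertion, I would observe that $G/S$ embeds into $\Out(S)$, so it suffices to prove that every subgroup of $\Out(S)$ is 3-generated for every finite nonabelian simple $S$. Using CFSG, this becomes a case inspection. If $S$ is alternating with $S \neq A_6$, or sporadic, then $|\Out(S)| \leq 2$, while $\Out(A_6) \cong C_2 \times C_2$, so both cases are trivial. If $S$ is of Lie type, the standard description gives $\Out(S) = \operatorname{Outdiag}(S) \rtimes (\Phi \times \Gamma)$, where $\operatorname{Outdiag}(S)$ is abelian of rank at most~$2$, $\Phi$ is cyclic (generated by field automorphisms), and $\Gamma$ is the graph automorphism group (of order at most~$2$, except $\Gamma \cong S_3$ when $S = D_4(q)$). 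Combining this description with the standard bounds on the rank of subgroups of abelian and short solvable groups, a brief case analysis bounds the rank of every subgroup of $\Out(S)$ by~$3$.

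For the second assertion, I would invoke the fact, also a consequence of CFSG, that every finite nonabelian simple group is 2-generated, together with a lifting argument. Let $\bar{a}_1, \bar{a}_2, \bar{a}_3$ generate $G/S$, and pick arbitrary lifts $a_i \in G$. Setting $H = \langle a_1, a_2, a_3 \rangle$ gives $HS = G$, so one only needs to arrange $S \leq H$. The natural device is to right-multiply each $a_i$ by a carefully chosen $s_i \in S$ and exploit the strong generation properties of~$S$ (for instance, the fact that $S$ has spread at least~$2$: for any nontrivial $x \in S$ there exists $y \in S$ with $\langle x, y \rangle = S$) to force a suitable pair of commutators or conjugates inside $H$ to generate $S$.

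The main obstacle is the second assertion. Keeping the generator count of $G$ at $3$, despite the apparent need for $3+2 = 5$ elements (lifts of generators of $G/S$ together with generators of $S$), requires delicate replacement arguments within the nonabelian simple socle and a more refined case analysis than the first part; this is precisely where~\cite{voltaGens} does the real work. The first assertion, by contrast, reduces to routine inspection of the well-known structure of $\Out(S)$.
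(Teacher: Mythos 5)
The first thing to note is that the paper does not prove this lemma at all: it is imported verbatim as the ``Corollary to Theorem~1'' of Dalla Volta and Lucchini~\cite{voltaGens}, so there is no internal argument to measure your sketch against; the only question is whether your sketch would itself establish the statement. For the first assertion it essentially would. Since \( C_G(S) = 1 \) for an almost simple group, \( G/S \) embeds in \( \Out(S) \), and bounding the rank of every subgroup of \( \Out(S) \) by \( 3 \) is a finite, if tedious, CFSG-based inspection. (Two small corrections: the decomposition \( \operatorname{Outdiag}(S) \rtimes (\Phi \times \Gamma) \) is not always a semidirect product of that shape --- for \( B_2(2^f) \), \( G_2(3^f) \), \( F_4(2^f) \) the field and graph automorphisms do not split off as a direct factor --- and the bound ``rank of a subgroup of \( A \rtimes B \) is at most \( \mathrm{rk}(A) + \mathrm{rk}(B) \)'' gives \( 4 \), not \( 3 \), in the worst configuration, so the case analysis genuinely has to be carried out. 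Neither issue is fatal.)

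The second assertion is where your proposal stops being a proof, and you say so yourself. Setting \( H = \langle a_1, a_2, a_3 \rangle \) with \( HS = G \) is the right opening move, but the step ``exploit spread at least \( 2 \) to force a suitable pair of commutators or conjugates inside \( H \) to generate \( S \)'' is a hope, not an argument. The obvious reductions do not close it: even if one arranges \( H \cap S \neq 1 \), minimality of \( S \) only yields that the normal closure of \( H \cap S \) in \( G \) equals \( S \), not that \( H \cap S = S \); and spread-type results concern generating pairs inside \( S \), whereas here there may be no room to spare --- \( G/S \) can itself require \( 3 \) generators (e.g.\ a subgroup \( C_2 \times C_2 \times C_f \) of \( \Out(D_4(q)) \) with \( f \) even), so one cannot reserve any of the three generators for the socle. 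Reducing the naive count of \( 5 \) to \( 3 \) is exactly the content of the cited theorem, and your sketch defers it back to~\cite{voltaGens}. So as a self-contained proof the proposal has a genuine gap in its main half; as a reading of the paper, the correct conclusion is simply that this lemma is a black-box citation and should be treated as such.
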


\begin{proposition}\label{ains}
	Classes (A1), (A8)--(A11) lie in (S).
\end{proposition}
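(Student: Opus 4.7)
The plan is to verify both conditions defining class (S), namely $a(G) \leq 16$ and $G_0$ 4-generated, for each of the classes (A1), (A8), (A9), (A10), (A11).

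The degree bound is read off Table~\ref{graphtab}: the values of $a(G)$ are $1$, $10$, $8$, $16$, $4$ respectively, all at most $16$.

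For class (A1), the group $G_0 \leq \GamL_1(p^d) = \GF(p^d)^\times \rtimes \Aut(\GF(p^d))$ sits inside a metacyclic group. Intersecting $G_0$ with the cyclic normal subgroup $\GF(p^d)^\times$ gives a cyclic group, and the quotient embeds in $\Aut(\GF(p^d)) \cong C_d$, which is also cyclic. Hence $G_0$ is 2-generated.

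For classes (A8)--(A11), the key idea is to recognize $G_0/Z(G_0)$ as an almost simple group. In each case $G_0$ contains the quasisimple normal subgroup $L \in \{\SL_5(q), B_3(q), D_5(q), \Sz(q)\}$ listed in Theorem~\ref{affrankthree}, and $L$ acts absolutely irreducibly on $V$. By Schur's lemma, $C_{\GL(V)}(L) = F^\times$ is cyclic, so $Z(G_0)$ (contained in this centralizer) is cyclic. The explicit formulas for $G^{(2)}_0$ in Theorem~\ref{class}~(iv)(c)--(f) then show that $G_0/Z(G_0) \leq \Aut(L/Z(L))$, the outer part consisting only of diagonal and field automorphisms, making $G_0/Z(G_0)$ almost simple with socle $L/Z(L)$. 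By Lemma~\ref{almsimpGens}, $G_0/Z(G_0)$ is 3-generated, and adding one generator for the cyclic $Z(G_0)$ yields a 4-element generating set of $G_0$.

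The main obstacle will be carefully identifying $Z(G_0)$ and $G_0/Z(G_0)$ in each case, particularly (A8), where $G^{(2)}_0 = (\GamL_5(q)/\{\pm 1\}) \times (F^\times/(F^\times)^2)$ has an extra direct factor, and (A10), where $G^{(2)}_0$ involves a central product with $\operatorname{Inndiag}(D_5(q))$. In both situations the verification reduces to tracking which outer automorphisms of $L/Z(L)$ actually appear in $G_0$, after which Schur's lemma and Lemma~\ref{almsimpGens} combine to finish the argument.
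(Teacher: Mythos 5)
Your proposal follows the paper's proof essentially verbatim: read \( a(G) \leq 16 \) off Table~\ref{graphtab}, establish that \( Z(G_0) \) is cyclic and \( G_0/Z(G_0) \) is almost simple, invoke Lemma~\ref{almsimpGens} to get 3-generation of the quotient, and add one generator for the cyclic center to conclude that \( G_0 \) is 4-generated. The only difference is one of sourcing: the paper obtains the structural facts about \( Z(G_0) \) and \( G_0/Z(G_0) \) by citing~\cite[(1.4)]{liebeckAffine} (remarking that they also follow from Table~\ref{graphtab} and Theorem~\ref{class}), whereas you sketch the direct derivation via Schur's lemma and the explicit descriptions of \( G^{(2)}_0 \) in Theorem~\ref{class}~(iv) --- which is precisely the alternative route the paper alludes to, so the case-by-case bookkeeping you flag as ``the main obstacle'' is exactly what the citation is standing in for.
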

\begin{proof}
	Obvious for (A1). If \( G \) lies in (A8)--(A11), then \( G_0/Z(G_0) \) is almost simple and \( Z(G_0) \) is cyclic,
	see~\cite[(1.4)]{liebeckAffine} (it also follows from Table~\ref{graphtab} and Theorem~\ref{class}).
	By Lemma~\ref{almsimpGens} the group \( G_0/Z(G_0) \) is 3-generated.
	Since \( Z(G_0) \) is cyclic, \( G_0 \) is 4-generated, as claimed.
	Table~\ref{graphtab} implies \( a(G) \leq 16 \) at last.
\end{proof}

As a corollary from Theorem~\ref{affrankthree} and Proposition~\ref{ains} we yield
\begin{proposition}
	Let \( G \) be a primitive affine rank~\( 3 \) permutation group on \( \Omega \) which does not preserve a nontrivial product decomposition,
	and assume that \( |\Omega| > 3^{12} \). Then \( G \) belongs to class (S), (T) or (Q).
\end{proposition}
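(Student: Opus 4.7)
The plan is to invoke Theorem~\ref{affrankthree} and then do a case-by-case bookkeeping on the eleven subclasses of class~(A). Since $G$ is primitive affine of rank~$3$, Theorem~\ref{affrankthree} applies and places $G_0$ in one of the classes (A), (B), (C). Classes (B) and (C) contain only finitely many groups and all have degree at most $3^{12}$, so the assumption $|\Omega| > 3^{12}$ rules them out immediately. Hence $G_0$ lies in one of the eleven subclasses (A1)--(A11).

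Next I would eliminate (A2): groups in (A2) are linearly imprimitive, and as was already noted right before Table~\ref{graphtab} (and as the text of the affine section emphasizes), they stabilize a nontrivial product decomposition of $\Omega$. This is exactly what our hypothesis excludes, so (A2) does not occur.

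The remaining ten subclasses are sorted by the definitions of (S), (T), (Q) combined with Proposition~\ref{ains}. By definition of (T) the classes (A3), (A4), (A5) fall into (T), and by definition of (Q) the classes (A6), (A7) fall into (Q). The classes (A1) and (A8)--(A11) are handled by Proposition~\ref{ains}, which places them all into (S). Thus every possibility for $G$ belongs to (S), (T) or (Q), completing the proof.

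There is essentially no obstacle here: the statement is a pure classification corollary, so the ``hard'' work has been pushed into Theorem~\ref{affrankthree}, into the regrouping definitions of (S), (T), (Q), and into Proposition~\ref{ains}. The only thing to double-check while writing is that the exclusion of (A2) really matches the hypothesis ``does not preserve a nontrivial product decomposition'' in the sense of Theorem~\ref{class}(ii), which it does by the discussion preceding Table~\ref{graphtab}.
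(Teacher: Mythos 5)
Your proposal is correct and matches the paper exactly: the paper states this proposition as an immediate corollary of Theorem~\ref{affrankthree} and Proposition~\ref{ains}, with classes (B) and (C) excluded by the degree bound, (A2) excluded by the product-decomposition hypothesis, (A3)--(A7) sorted into (T) and (Q) by definition, and (A1), (A8)--(A11) placed in (S) by Proposition~\ref{ains}. Your write-up simply makes this bookkeeping explicit; no further comment is needed.
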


Names of classes, corresponding to cases (A1)--(A11), are written in the last column of Table~\ref{graphtab}.
We stress that classes (S), (T) and (Q) may overlap, so a group may belong to several classes.

\subsubsection{Small groups case}\label{subsubsSmallcase}

Let \( G \) lie in (S), and recall that \( G = V \rtimes G_0 \),
where \( V \) is an \( a \)-dimensional vector space over \( \GF(q) \)
and \( G_0 \leq \GamL_a(q) \). Moreover, \( a \leq 16 \) and \( G_0 \) is 4-generated.

In order to compute the 2-closure we first show that \( G^{(2)} \) lies in
\( \AGL_a(q) \) for some \( a \leq 48 \) and \( q \).
Then we explicitly enumerate all groups of the form \( \GamL_a(q) \), \( a \leq 48 \),
containing \( G_0 \), and find the 2-closure by brute-force, i.e.\ by listing all elements
of \( \AGL_a(q) \) and checking which permutations preserve the associated rank~3 graph.

\begin{lemma}
	Let \( G \) be an affine rank~\( 3 \) group of sufficiently large degree, and suppose that \( G \)
	does not stabilize a nontrivial product decomposition. Then \( a(G^{(2)}) \leq 3a(G) \).
\end{lemma}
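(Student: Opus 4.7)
The plan is to invoke the classification of affine rank 3 groups (Theorem \ref{affrankthree}) and verify the bound row by row in Table \ref{graphtab}. First I would observe that since \( G \leq G^{(2)} \) and \( G \) does not preserve a nontrivial product decomposition, neither does \( G^{(2)} \); by Theorem \ref{class}(iv) the 2-closure is affine on the same underlying vector space, and being rank 3 it must itself belong to one of the classes of Theorem \ref{affrankthree}. Moreover, \( G \) and \( G^{(2)} \) share the same 2-orbits, so they preserve the same rank 3 graph and fall into the same row of Table \ref{graphtab}.

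For each row I then compare the value of \( a(G) \) given by the table with \( a(G^{(2)}) \), which is determined by the ambient affine group in the corresponding clause of Theorem \ref{class}(iv). In families (S) and (Q) the field and dimension intrinsic to the graph essentially match those used to realize \( G \), so \( a(G^{(2)}) \leq 2 a(G) \); for instance, in (A6) one has \( a(G) = m \) over \( \GF(q^2) \) while the graph \( \VO^{\epsilon}_{2m}(q) \) is defined over \( \GF(q) \), giving \( a(G^{(2)}) = 2m = 2 a(G) \), and in the (S) cases \( G \) and \( G^{(2)}  \) are realized over the same field and dimension.

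The essential place to look is the tensor family (T), where \( G \) may be semilinear over a proper extension of the natural field of the associated graph. Classes (A3) and (A4) yield \( a(G^{(2)}) / a(G) \in \{1, 2\} \) by direct inspection, and class (A5) is extremal: with \( a(G) = 2 \) over \( \GF(q) \) and the graph \( H_{\sqrt[3]{q}}(2, 3) \) forcing \( G^{(2)} \leq \AGL_6(\sqrt[3]{q}) \) by Theorem \ref{class}(iv)(b), one obtains \( a(G^{(2)}) = 6 = 3 a(G) \). This is the unique source of the factor 3; all other field extensions appearing in the classification are at most quadratic, so the maximum over the table gives the desired bound. The only real obstacle is carefully pairing up the rows of Table \ref{graphtab} with the clauses of Theorem \ref{class}(iv) and tracking the field of the ambient \( \AGL \) in each case, since \( G \) and \( G^{(2)} \) are generally defined over different fields even though they act on the same set \( V \).
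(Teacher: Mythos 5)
Your proposal is correct and follows essentially the same route as the paper: bound \( \Aut \) of each rank~3 graph by the ambient \( \AGL \) from Theorem~\ref{class}(iv), then compare with the values of \( a(G) \) in Table~\ref{graphtab} row by row, identifying (A5) with \( a(G)=2 \) and \( a(G^{(2)})=6 \) as the unique extremal case. The row-by-row field bookkeeping (e.g.\ the quadratic extension in (A6) and the cubic one in (A5)) matches the paper's intended inspection exactly.
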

\begin{proof}
	It follows from Theorem~\ref{class} that
	\begin{equation*}
	\begin{split}
		\Aut(H_q(2, m)) \leq \AGL_{2m}(q),\\
		\Aut(\VO^{\pm}_{2m}(q)) \leq \AGL_{2m}(q),\\
		\Aut(A(5, q)) \leq \AGL_{10}(q),\\
		\Aut(\VD_{5,5}(q)) \leq \AGL_{16}(q),\\
		\Aut(\VSz(q)) \leq \AGL_4(q).
	\end{split}
	\end{equation*}
	Also, Theorem~\ref{class}~(iv)(a) implies that if \( G \leq \AGL_1(q) \) then \( G^{(2)} \leq \AGL_1(q) \).
	Table~\ref{graphtab} lists \( a(G) \) and corresponding rank~3 graphs for affine linearly primitive rank~3 groups \( G \),
	so the inequality \( a(G^{(2)}) \leq 3a(G) \) follows easily by inspection of the table. Equality is attained in case (A5),
	where \( a(G) = 2 \), and \( G^{(2)} \) is the automorphism group of the bilinear forms graph \( H_{\sqrt[3]{q}}(2, 3) \),
	so \( a(G^{(2)}) = 6 \).
\end{proof}

\begin{corollary}\label{asmall}
	If \( G \) belongs to (S), then \( a(G^{(2)}) \leq 48 \).
\end{corollary}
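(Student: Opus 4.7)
The plan is to derive the bound as an immediate consequence of the preceding lemma, with the only substantive step being to check that the lemma's hypothesis is satisfied for every $G$ in class (S). So the proof will essentially be a single chain of inequalities.

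First I would note that the definition of class (S) sits inside the classification of linearly primitive affine rank~3 groups, namely within the union of classes (A1) and (A8)--(A11) as established in Proposition~\ref{ains}. By Theorem~\ref{affrankthree}, the groups which stabilize a nontrivial linear decomposition form class (A2), and accordingly none of (A1), (A8)--(A11) do. More importantly, in the permutation-group sense relevant here, the groups stabilizing a nontrivial product decomposition $\Omega \simeq \Delta \times X$ or $\Omega \simeq \Delta^2$ are precisely those listed in Theorem~\ref{class}~(i),~(ii), and these have been segregated by the nonaffine algorithm. Thus any $G$ in (S) is automatically a primitive affine rank~3 group of case~(iv) in Theorem~\ref{class}, not stabilizing any nontrivial product decomposition.

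Second, I would invoke the preceding lemma, which under exactly this hypothesis gives the bound $a(G^{(2)}) \leq 3\,a(G)$ for sufficiently large degree. By the very definition of class~(S) we have $a(G) \leq 16$, and therefore
\[
a(G^{(2)}) \;\leq\; 3\,a(G) \;\leq\; 3\cdot 16 \;=\; 48,
\]
which is the stated claim.

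There is no real obstacle here: the work has already been done in the preceding lemma (where the case analysis over Table~\ref{graphtab} yields the factor $3$, attained in class (A5)), and the corollary is a bookkeeping step that supplies the concrete numerical constant needed for the subsequent brute-force enumeration of overgroups of $G_0$ inside $\GamL_a(q)$ with $a \leq 48$.
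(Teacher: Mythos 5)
Your proof is correct and matches the paper's (implicit) argument: the corollary is stated as an immediate consequence of the preceding lemma, obtained by combining $a(G^{(2)}) \leq 3a(G)$ with the bound $a(G) \leq 16$ from the definition of class (S). Your additional care in verifying the lemma's hypothesis (that groups in (S) do not stabilize a nontrivial product decomposition) is sound and consistent with how the paper sets up the classes.
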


\begin{lemma}\label{aglsize}
	Let \( n = q^a \). Then \( |\GamL_a(q)| \leq n^{a+1} \) and \( |\AGL_a(q)| \leq n^{a+2} \).
\end{lemma}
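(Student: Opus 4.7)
The plan is a straightforward layered order count, using the standard factorizations $\GamL_a(q) = \GL_a(q) \rtimes \Aut(\GF(q))$ and $\AGL_a(q) = V \rtimes \GamL_a(q)$, with crude bounds applied at each stage.

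First I would bound $|\GL_a(q)| = \prod_{i=0}^{a-1}(q^a - q^i)$ trivially from above by $q^{a \cdot a} = q^{a^2} = n^a$. Writing $q = p^f$ so that $|\Aut(\GF(q))| = f$, and noting that $f \leq q \leq q^a = n$ for any $a \geq 1$, this yields
\[ |\GamL_a(q)| = f \cdot |\GL_a(q)| \leq n \cdot n^a = n^{a+1}. \]

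The affine bound then follows immediately by multiplying by $|V| = q^a = n$, giving $|\AGL_a(q)| \leq n^{a+2}$. There is no genuine obstacle here; the only mild point worth verifying is that the Galois factor $f$ can be absorbed into $n$, which is immediate since $f = \log_p q \leq q \leq n$.
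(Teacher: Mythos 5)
Your proof is correct and matches the paper's argument essentially verbatim: both bound $|\GL_a(q)|$ by $q^{a^2}=n^a$, absorb the Galois factor $|\Aut(\GF(q))|$ into one extra factor of $n$ (the paper uses the marginally sharper $f\leq q$, you use $f\leq n$, with the same end result), and then multiply by $|V|=n$ for the affine bound. No further comment is needed.
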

\begin{proof}
	Observe that
	\[ |\GamL_a(q)| = |\GL_a(q)| \cdot |\Aut(\GF(q))| \leq q^{a^2} \cdot q \leq n^{a+1}. \]
	In particular, \( |\AGL_a(q)| = q^a \cdot |\GamL_a(q)| \leq n^{a+2} \).
\end{proof}
\begin{corollary}
	If \( G \) belongs to (S), then \( |G| \leq n^{18} \).
\end{corollary}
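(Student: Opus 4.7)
The statement follows directly from combining Lemma~\ref{aglsize} with the definition of class (S). The plan is as follows.

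First, recall that by the very definition of class (S), any such \( G \) satisfies \( G \leq \AGL_a(q) \) for some prime power \( q \) with \( a = a(G) \leq 16 \), and we have \( n = q^a \). Hence it suffices to bound the order of the ambient group \( \AGL_a(q) \).

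Second, I would simply invoke Lemma~\ref{aglsize}, which gives \( |\AGL_a(q)| \leq n^{a+2} \). Substituting \( a \leq 16 \) yields
\[ |G| \leq |\AGL_a(q)| \leq n^{a+2} \leq n^{18}, \]
as required.

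There is no real obstacle here: the 4-generation hypothesis in the definition of (S) is not needed for this particular bound (it will be used elsewhere, presumably when enumerating candidate overgroups or bounding search cost), and the containment \( G \leq \AGL_a(q) \) together with the explicit bound from the preceding lemma does all the work. The corollary is best viewed as a bookkeeping step that records, once and for all, that groups in class (S) have polynomially bounded order in \( n \), which in turn justifies later brute-force enumeration inside \( \AGL_a(q) \).
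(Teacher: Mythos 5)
Your proof is correct and matches the paper's (implicit) argument exactly: since \( a(G) \leq 16 \) by the definition of class (S), one has \( G \leq \AGL_a(q) \) with \( n = q^a \), and Lemma~\ref{aglsize} gives \( |G| \leq n^{a+2} \leq n^{18} \). Your observation that the 4-generation condition plays no role here is also accurate.
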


Above corollary justifies the name ``small'' for the class (S), indeed,
groups from this class have order polynomially bounded in terms of \( n \).

\begin{lemma}\label{embed}
	Let \( g_1, \dots, g_k \) be some elements of \( \GL_d(p) \), where \( k \) is a fixed constant,
	and let \( G_0 \) be the group generated by \( g_1, \dots, g_k \).
	It can be checked in polynomial time whether \( G_0 \) is conjugate to a subgroup of \( \GamL_a(q) \leq \GL_d(p) \),
	where \( a \) is a fixed constant, and if it is conjugate, an element \( t \in \GL_d(p) \) such that
	\( G_0^t \leq \GamL_a(q) \) can be found in polynomial time. In fact, all embeddings of \( G_0 \) into
	\( \GamL_a(q) \) can be found in polynomial time.
\end{lemma}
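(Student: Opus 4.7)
The plan is to enumerate a polynomial-sized set of candidate images of the generators inside $\GamL_a(q)$ and, for each candidate tuple, invoke Chistov's simultaneous conjugacy algorithm (Lemma~\ref{conjalgo}) to decide whether a suitable conjugator in $\GL_d(p)$ exists.

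First I would fix a concrete copy of $\GamL_a(q)$ inside $\GL_d(p)$: pick an irreducible polynomial of degree $d/a$ over $\GF(p)$ to build $\GF(q)$, choose a $\GF(p)$-basis of $\GF(q)^a$, and express $\GF(q)$-scalar multiplication together with the Frobenius automorphism as $\GF(p)$-linear maps. This realizes $\GamL_a(q) = \GL_a(q) \rtimes \Aut(\GF(q))$ as a concrete subgroup of $\GL_d(p)$ in polynomial time in $n = p^d$ for fixed $a$. By Lemma~\ref{aglsize} we have $|\GamL_a(q)| \leq n^{a+1}$, so I can list all of its elements explicitly (enumerate invertible $a \times a$ matrices over $\GF(q)$ and multiply by each of the $d/a$ Frobenius twists). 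Consequently there are at most $n^{k(a+1)}$ ordered $k$-tuples $(h_1, \dots, h_k) \in \GamL_a(q)^k$, a polynomial number once $a$ and $k$ are fixed constants.

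For each such tuple I would apply Lemma~\ref{conjalgo} to decide whether there exists $t \in \GL_d(p)$ with $g_i^t = h_i$ for all $i$, recovering $t$ whenever it exists, and I would record the successful pairs $(t; h_1, \dots, h_k)$. Every embedding of $G_0$ into the fixed copy of $\GamL_a(q)$ induced by $\GL_d(p)$-conjugation is uniquely determined by the ordered image of $(g_1, \dots, g_k)$, so this enumeration produces all embeddings; and passing to an arbitrary $\GL_d(p)$-conjugate copy of $\GamL_a(q)$ merely composes the recorded conjugators with a fixed element, so no generality is lost. The total cost is a polynomial number of polynomial-time Chistov calls, hence polynomial overall.

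The main obstacle the argument has to circumvent is that a direct search for $t$ inside $\GL_d(p)$ is hopeless, since $|\GL_d(p)|$ is exponential in $n$. The lemma is nontrivial exactly because fixing the small constant $a$ shrinks the target $\GamL_a(q)$ to a set of polynomial size (Lemma~\ref{aglsize}); Chistov's algorithm then converts existence of the conjugator into a polynomially-bounded search over target tuples rather than over conjugators, which is the whole point.
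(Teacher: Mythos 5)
Your proposal is correct and follows essentially the same route as the paper: enumerate all $k$-tuples of elements of a fixed copy of $\GamL_a(q)$ (polynomially many by Lemma~\ref{aglsize}) and test each with Chistov's simultaneous conjugacy algorithm (Lemma~\ref{conjalgo}). Your extra remarks on constructing an explicit copy of $\GamL_a(q)$ and on conjugate copies mirror the paper's discussion immediately following the lemma.
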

\begin{proof}
	Try all \( k \)-tuples \( h_1, \dots, h_k \) of elements of \( \GamL_a(q) \) (there are at most
	\( n^{k(a+1)} \) of those by Lemma~\ref{aglsize}), and using Lemma~\ref{conjalgo} check if there
	exists an element \( t \in \GL_d(p) \) such that \( g_i^t = h_i \), \( i = 1, \dots, k \).
\end{proof}

Recall that there might be several subgroups of the form \( \GamL_a(q) \) inside \( \GL_d(p) \),
depending on the embedding of the multiplicative group of the field \( \GF(q) \) in \( \GL_d(p) \). All of these subgroups are conjugate
in \( \GL_d(p) \), so if \( \GamL_a(q) \leq \GL_d(p) \) and \( G_0 \) lies in some \( \widetilde{\GamL_a(q)} \),
then \( \widetilde{\GamL_a(q)}^t = \GamL_a(q) \) for some \( t \in \GL_d(p) \), and hence \( G_0^t \leq \GamL_a(q) \).
We can find all such \( t \) in polynomial time in the situation of Lemma~\ref{embed}, so in fact we can
find all subgroups of the form \( \GamL_a(q) \) containing \( G_0 \).

\begin{proposition}
	It can be tested in polynomial time if \( G \) belongs to (S), and if it does,
	\( G^{(2)} \) can be found in polynomial time.
\end{proposition}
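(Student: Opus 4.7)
The plan is to first certify that $G$ lies in (S) by locating a polynomially sized overgroup of the form $\AGL_a(q) \leq \Sym(\Omega)$ that contains $G^{(2)}$, and then to extract $G^{(2)}$ by brute force.

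First I would compute $V = \operatorname{soc}(G)$ and the zero stabiliser $G_0$, which realises $G_0 \leq \GL_d(p)$ as a matrix group acting on $V \simeq \GF(p)^d$, where $n = p^d$. By the corollary following Lemma~\ref{aglsize}, any $G$ in (S) satisfies $|G| \leq n^{18}$, so a preliminary order check lets me reject inputs with larger cardinality. Otherwise $|G_0|$ is polynomially bounded in $n$; by item~10 of Proposition~\ref{standardAlgos} I can enumerate every element of $G_0$ in polynomial time and brute-force over the polynomially many quadruples of such elements, searching for a 4-generating set. If none exists, then $G_0$ is not 4-generated, so $G$ does not lie in (S) and the procedure returns failure.

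Having fixed a 4-generating set $g_1, \ldots, g_4$ of $G_0$ viewed as matrices in $\GL_d(p)$, I iterate over the $O(\log n)$ pairs $(a, q)$ with $a \leq 48$ and $q^a = n$. For each pair, Lemma~\ref{embed} together with the remark that follows it produces in polynomial time every $t \in \GL_d(p)$ with $G_0^t \leq \GamL_a(q)$, equivalently every conjugate $\Theta = \GamL_a(q)^{t^{-1}}$ of the fixed target copy that contains $G_0$. For each such $\Theta$ I form the ambient group $V \rtimes \Theta \leq \Sym(\Omega)$, which by Lemma~\ref{aglsize} has at most $n^{50}$ elements, enumerate those elements, and retain those permutations preserving each non-diagonal 2-orbit of $G$; this yields a candidate subgroup $H_\Theta$. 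The algorithm outputs the largest such $H_\Theta$, or failure if none was produced.

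For correctness when $G$ lies in (S): Corollary~\ref{asmall} yields $a^* \leq 48$ and $q^*$ with $G^{(2)} \leq \AGL_{a^*}(q^*) \leq \Sym(\Omega)$ for some embedding, which in turn gives $G_0 \leq \GamL_{a^*}(q^*)^{t^*}$ for some $t^* \in \GL_d(p)$. The main loop reaches $(a^*, q^*)$, Lemma~\ref{embed} produces $t^*$, and so the associated $V \rtimes \Theta^*$ contains $G^{(2)}$; since $G^{(2)}$ is by definition the full stabiliser of the 2-orbits of $G$ in $\Sym(\Omega)$, the filtering step recovers precisely $H_{\Theta^*} = G^{(2)}$, while every other $H_\Theta$ is also contained in $G^{(2)}$ by construction. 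The main delicate point is the embedding-enumeration step: I must rely on the remark after Lemma~\ref{embed} that all copies of $\GamL_a(q)$ in $\GL_d(p)$ are conjugate and that only polynomially many conjugators $t$ arise, so that a single fixed target copy of $\GamL_a(q)$ suffices for the search.
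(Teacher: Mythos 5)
Your proposal is correct and follows essentially the same route as the paper's proof: check \( |G| \leq n^{18} \), enumerate \( G_0 \) and brute-force a \( 4 \)-generating set, use Lemma~\ref{embed} (and the remark after it) to enumerate all copies of \( \GamL_a(q) \), \( a \leq 48 \), containing \( G_0 \), and then filter the polynomially bounded groups \( \AGL_a(q) \) by brute force, returning the largest stabilizer of the \( 2 \)-orbits. The one small deviation is that the paper's membership test for (S) also checks that \( G_0 \) embeds into some \( \GamL_a(q) \) with \( a \leq 16 \) (i.e.\ that \( a(G) \leq 16 \), which is part of the definition of (S)), whereas you only verify the order bound and \( 4 \)-generation; this does not affect the correctness of the computed \( 2 \)-closure or of the overall algorithm, but you should add that check to literally decide membership in (S).
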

\begin{proof}
	Find \( |G| \) and check that \( |G| \leq n^{18} \).
	If that is the case, list all elements of \( G_0 \) in polynomial time,
	and check if any \( 4 \)-tuple of elements generates \( G_0 \) (there are
	at most \( n^{18\cdot 4} \) such tuples). Given a generating set of size \( 4 \),
	by Lemma~\ref{embed} we can check in polynomial time if \( G_0 \) is conjugate to a subgroup of
	\( \GamL_a(q) \), \( a \leq 16 \), hence it is possible to check
	if \( G \) belongs to (S) in polynomial time.

	By Corollary~\ref{asmall}, \( G^{(2)} \leq \AGL_a(q) \), \( a \leq 48 \), in particular,
	\( G_0 \leq (G^{(2)})_0 \leq \GamL_a(q) \), \( a \leq 48 \). By Lemma~\ref{embed},
	we can find all groups \( \GamL_a(q) \), \( a \leq 48 \), such that \( G_0 \leq \GamL_a(q) \)
	in polynomial time. In particular, we can find all groups \( \AGL_a(q) \), \( a \leq 48 \), containing \( G \),
	and since \( |\AGL_a(q)| \) is polynomially bounded in terms of \( n \), we can find the subgroup of
	\( \AGL_a(q) \) preserving the 2-orbits of \( G \) in polynomial time.
	The largest such subgroup will give us the 2-closure of~\( G \), as required.
\end{proof}

In the following figure (Algorithm~\ref{smallAlgo}) we give an overview of the algorithm for class (S).
When checking the condition \( a(G) \leq 16 \) we use an additional variable \( f \)
as a flag: \( f = 0 \) if \( G_0 \) cannot be embedded into \( \GamL_a(q) \), \( a \leq 16 \),
and \( f = 1 \) otherwise. In the last part of the algorithm, intersection \( G^{(2)} \cap \AGL_a(q) \)
is computed by brute-force, i.e.\ all elements of \( \AGL_a(q) \) are listed and it is checked which
elements preserve the 2-orbits of \( G \).

\begin{algorithm}[p]
	\SetAlgoLined
	\KwIn{generators of an affine rank~3 group \( G \) of sufficiently large degree \( n = p^d \)}
	\KwOut{generators of \( G^{(2)} \) if \( G \) lies in (S), and failure otherwise}
	\medskip

	Check that \( G \) is affine and construct \( G_0 \)\;
	\smallskip

	\If{\( |G| > n^{18} \)}{\Return{failure}}
	Compute all elements of \( G \)\;
	\eIf{\( G_0 \) is not 4-generated}{
		\Return{failure}
		}{
		Find 4 generators of \( G_0 \)\ by brute-force;
		}
	\medskip

	\tcp{Check that \( a(G) \leq 16 \)}
	Set \( f = 0 \)\;
	\For{\( a = 1, \dots, 16 \), such that \( a \) divides \( d \)}{
		Set \( q = p^{d/a} \)\;
		\If(\tcp*[f]{Lemma~\ref{embed}}){\( G_0 \) embeds in \( \GamL_a(q) \)}{
			Set \( f = 1 \)\;
			}
		}
	\If{\( f = 0 \)}{
		\Return{failure}
		}
	\medskip

	Set \( L \) to be an empty list\;
	\For{\( a = 1, \dots, 48 \), such that \( a \) divides \( d \)}{
		Set \( q = p^{d/a} \)\;
		\For(\tcp*[f]{Lemma~\ref{embed}}){\( \GamL_a(q) \leq \GL_d(p) \) such that \( G_0 \leq \GamL_a(q) \)}{
			Find \( G^{(2)} \cap \AGL_a(q) \) by brute-force\;
			Add \( G^{(2)} \cap \AGL_a(q) \) to the list \( L \)\;
			}
		}
	\smallskip

	\Return{the largest member of \( L \)}
	\caption{Small groups case\label{smallAlgo}}
\end{algorithm}

\subsubsection{Tensor product case}\label{subsubsTensor}

Let \( G \) lie in (T), and recall that \( G = V \rtimes G_0 \), where
\( V \) is a \( 2m \)-dimensional vector space over \( \GF(q) \) and \( G_0 \leq \GamL_{2m}(q) \), \( m \geq 2 \).
The space decomposes into a tensor product \( V = U \otimes W \) over \( \GF(q) \), where \( \dim U = 2 \),
\( \dim W = m \), and one of the two orbits of \( G_0 \) on nonzero vectors consists of simple tensors,
i.e.\ vectors of the form \( u \otimes w \), \( u \in U \), \( w \in W \).

We mention that if \( G_0 \) lies in (A3), it stabilizes a tensor decomposition by definition,
while in classes (A4) and (A5) it follows from the fact that \( G_0 \) stabilizes a certain subspace.
For example, if \( G_0 \) lies in (A4), it stabilizes a subspace \( U \leq V \)
of dimension \( m \) over \( \GF(\sqrt{q}) \), where \( n = q^m \), and hence \( G_0 \)
stabilizes a nontrivial tensor decomposition \( V \simeq U \otimes \GF(q) \),
where the tensor product is taken over \( \GF(\sqrt{q}) \). Similar reasoning holds for (A5).

Since \( G \) from class (T) stabilizes a bilinear forms graph, Theorem~\ref{class} and \cite[Theorem~9.5.1]{brouwerDRG} imply that
\[ G^{(2)} = \GF(q)^{2m} \rtimes ((\GL_2(q) \circ \GL_m(q)) \rtimes \Aut(F)), \]
when \( m > 2 \), and
\[ G^{(2)} = \GF(q)^4 \rtimes ((\GL_2(q) \circ \GL_2(q)) \rtimes \Aut(F)) \rtimes C_2, \]
when \( m = 2 \), where an additional automorphism of order two flips components of simple tensors.
Notice also that the case of \( m = 2 \) is covered by Theorem~\ref{class}~(c), since \( H_q(2, 2) \) is isomorphic to \( \VO_4^+(q) \).

To compute the 2-closure in polynomial time, we will first obtain an explicit embedding of \( G_0 \)
into \( \GamL_{2m}(q) \). Then we will exhibit a basis of \( V \) of the form \( u_i \otimes w_j \), \( i = 1, 2 \), \( j = 1, \dots, m \),
where \( u_1, u_2 \) is a basis of \( U \) and \( w_1, \dots, w_m \) is a basis of \( W \); we will call
such a basis a \emph{tensor basis}. Finally, given a tensor basis and the action of the field \( \GF(q) \)
it is possible to write down generators of \( G^{(2)} \) explicitly.

We start with several general lemmata which will be used in this and next sections to
construct embeddings of \( G_0 \) into groups of the form \( \GamL_a(q) \).
Recall that the group \( \GamL_a(q) \) is completely determined by the embedding
of the field \( \GF(q) \) in \( \GL_d(p) \). Indeed, if \( F = \GF(q) \), then
\( F^{\times} \leq \GamL_a(q) \) and \( \GamL_a(q) \) is precisely the set of all
\( F \)-semilinear mappings of the vector space \( V \). It is enough to know
a generator of \( F^\times \) to specify the field, hence we write ``the field is given''
if such a generator is given as input to an algorithm, and ``the field is known'' if we
computed it at some stage of an algorithm.

Notice that in what follows matrix groups \( \GL_d(p) \), \( G_0 \) etc.\ are given
by generating permutations, in particular, polynomial time still means polynomial in \( n = p^d \).

\begin{lemma}\label{fieldelem}
	If \( c \in \GL_d(p) \) and \( a \) divides \( d \), then we can check in polynomial time whether
	\( c \) generates a multiplicative group of a finite field \( \GF(p^{d/a}) \).
	Moreover, given \( G_0 \leq \GL_d(p) \), we can check whether \( G_0 \leq \GamL_a(p^{d/a}) \),
	where the respective field is generated by the element \( c \).
\end{lemma}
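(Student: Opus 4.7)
The plan is to view the element $c$ as a matrix $M_c$ acting on $V = \GF(p)^d$ (extracted in polynomial time by evaluating the given permutation on a $\GF(p)$-basis) and split the lemma into two independent checks.

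For the first assertion, observe that ``$c$ generates the multiplicative group of some $\GF(p^{d/a})$ sitting inside $\GL_d(p) \cup \{0\}$'' is equivalent to the conjunction of (a) the $\GF(p)$-algebra $F := \GF(p)[c] \subseteq \operatorname{Mat}_d(\GF(p))$ is a field of order $p^{d/a}$, and (b) $c$ has multiplicative order $p^{d/a} - 1$. Condition (a) reduces to computing the minimal polynomial $\mu(x) \in \GF(p)[x]$ of $M_c$ by standard linear algebra and checking that $\mu$ is irreducible of degree exactly $d/a$, which is a classical polynomial-time operation over a finite field. Condition (b) reduces to computing the order of $c$ as a permutation of $V$ and comparing it with $p^{d/a} - 1$. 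Both conditions are evidently necessary, and they are jointly sufficient: if $\mu$ is irreducible of degree $d/a$ then $F \cong \GF(p^{d/a})$, and then (b) forces $\langle c \rangle = F^\times$, so $\langle c \rangle \cup \{0\} = F$ is the required field.

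For the second assertion, assume we have already identified the field $F = \GF(p)[c]$. The key algebraic observation is that $\GamL_a(p^{d/a})$, viewed as a subgroup of $\GL_d(p)$, coincides with the normalizer $N_{\GL_d(p)}(F^\times) = N_{\GL_d(p)}(\langle c \rangle)$: the $F$-linear subgroup $\GL_a(p^{d/a})$ equals the centralizer of $F^\times$, while extension by field automorphisms of $F$ corresponds precisely to conjugating $\langle c \rangle$ onto itself while permuting its generators. Consequently, $G_0 \leq \GamL_a(p^{d/a})$ if and only if each generator $g$ of $G_0$ satisfies $g c g^{-1} \in \langle c \rangle$. To verify this, I would compute the permutation $g c g^{-1}$ and test whether it equals $c^j$ for some $0 \leq j \leq p^{d/a} - 2$; since $p^{d/a} - 1 \leq n$, either direct enumeration of the powers of $c$ or a membership test in the $\GF(p)$-span of $1, c, \dots, c^{d/a - 1}$ runs in polynomial time.

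The only genuinely nontrivial ingredient is the irreducibility test of $\mu$ over $\GF(p)$, and this is resolved by classical polynomial-time finite-field polynomial factorization; everything else is routine permutation and linear-algebra bookkeeping, with the relevant quantities (such as $p^{d/a} - 1$ and the $d \times d$ matrix entries) all controlled by the input size $n = p^d$.
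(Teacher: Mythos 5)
Your proof is correct, and although it follows the same two-stage outline as the paper's (first certify that \( \{0\} \cup \langle c \rangle \) is a copy of \( \GF(p^{d/a}) \), then test each generator of \( G_0 \) against it), both stages are realized differently. For the field test, the paper simply lists the \( p^{d/a} \) matrices \( 0, c, c^2, \dots, c^{p^{d/a}-1} \) and checks closure under addition together with the order condition, whereas you compute the minimal polynomial of \( c \) and test irreducibility of degree \( d/a \); both run in polynomial time, the paper's check being more elementary and yours leaning on standard finite-field factorization. For the second stage, the paper verifies for each generator \( g \) that \( F \) is invariant under conjugation by \( g \) \emph{and} that the induced map \( f \mapsto f^g \) is a power of Frobenius; you instead invoke the identity \( \GamL_a(p^{d/a}) = N_{\GL_d(p)}(\langle c \rangle) \) and reduce everything to the single check \( c^g \in \langle c \rangle \). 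This is a genuine economy, but the inclusion \( N_{\GL_d(p)}(\langle c \rangle) \leq \GamL_a(p^{d/a}) \) deserves one more sentence than you give it: a priori an element normalizing the cyclic group \( \langle c \rangle \cong C_{p^{d/a}-1} \) could induce any of its \( \varphi(p^{d/a}-1) \) group automorphisms, far more than the \( d/a \) field automorphisms, and "permuting the generators of \( \langle c\rangle \)" does not by itself rule these out. What closes the gap is that conjugation is additive on \( \operatorname{Mat}_d(\GF(p)) \), so an element stabilizing \( \langle c \rangle \) stabilizes its \( \GF(p) \)-span \( F \) and restricts to a \emph{ring} automorphism of \( F \) fixing the prime field, hence to a Frobenius power --- which is exactly the computation \( (f \cdot v)^g = f^g \cdot v^g \) that the paper writes out explicitly. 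With that observation made explicit, your argument is complete and slightly leaner than the original.
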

\begin{proof}
	For the first claim, check that the order of \( c \) equals \( p^{d/a}-1 \), and check
	that the set of matrices \( 0 \), \( c, c^2, \dots, c^{p^{d/a}-1} \)
	is closed under addition (clearly then it satisfies all other field axioms).
	For the second claim, it suffices to check that all generators \( g_1, \dots, g_k \) of \( G_0 \)
	act semilinearly on \( V \simeq \Omega \) viewed as a vector space over
	\( \GF(p^{d/a}) \simeq F = 0 \cup \langle c \rangle \). First, check that \( F \) is \( G_0 \)-invariant,
	i.e.\ \( c^g \in F \) for \( g \in \{ g_1, \dots, g_k \} \).
	Now, notice that for each \( v \in V \), \( f \in F \) and every generator \( g \in \{ g_1, \dots, g_k \} \) we have:
	\[ (f \cdot v)^g = v^{fg} = v^{gf^g} = f^g \cdot v^g, \]
	where \( \cdot \) denotes the action of the field on the vector space and is defined as \( f \cdot v = v^f \)
	for \( v \in V \) and \( f \in \langle c \rangle \), and \( 0 \cdot v = 0 \).
	Therefore \( G \) acts semilinearly if and only if for every generator \( g \in \{ g_1, \dots, g_k \} \)
	and every \( f \in F \) we have \( f^g = f^{p^i} \) for some \( i \) (depending only on \( g \)).
	All checks can be done in polynomial time, so the claim is proved.
\end{proof}

\begin{lemma}\label{centrAlgo}
	Let \( H \leq \GL_d(p) \) and suppose that \( V \) decomposes into a direct sum of
	\( k \) irreducible \( H \)-modules over \( \GF(p) \).
	Then \( |C_{\GL_d(p)}(H)| \leq n^k \) and, in particular,
	all elements of \( C_{\GL_d(p)}(H) \) can be enumerated in time polynomial in~\( n^k \).
\end{lemma}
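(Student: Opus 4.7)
The plan is to identify $C_{\GL_d(p)}(H)$ with the unit group of $\operatorname{End}_H(V)$ and then apply Schur's lemma together with Wedderburn's theorem on finite division rings.

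First I would group the given irreducible decomposition $V = V_1 \oplus \cdots \oplus V_k$ by isomorphism type, obtaining $V \cong \bigoplus_{j=1}^{t} U_j^{m_j}$, where the $U_j$ are pairwise non-isomorphic irreducible $H$-modules and $\sum_{j=1}^{t} m_j = k$. By Schur's lemma, $D_j := \operatorname{End}_H(U_j)$ is a division ring containing $\GF(p)$, and by Wedderburn it is a finite field. Standard semisimple theory then gives $\operatorname{End}_H(V) \cong \prod_{j=1}^{t} M_{m_j}(D_j)$, and therefore
\[
    C_{\GL_d(p)}(H) \;=\; \Aut_H(V) \;\cong\; \prod_{j=1}^{t} \GL_{m_j}(D_j).
\]

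For the size bound, I would combine the crude estimate $|\GL_{m_j}(D_j)| \leq |D_j|^{m_j^2}$ with two elementary observations: since $U_j$ is a nonzero $D_j$-vector space, $|U_j| \geq |D_j|$; and since $U_j^{m_j}$ embeds as a subspace of $V$, $|U_j|^{m_j} \leq n$. Chaining these,
\[
    |D_j|^{m_j^2} \;\leq\; |U_j|^{m_j^2} \;=\; \bigl(|U_j|^{m_j}\bigr)^{m_j} \;\leq\; n^{m_j},
\]
and multiplying over $j$ yields $|C_{\GL_d(p)}(H)| \leq n^{\sum_j m_j} = n^k$.

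For the enumeration statement, Lemma~\ref{ronyaiAlgo} produces generators of $C_{\GL_d(p)}(H)$ in polynomial time, and since this group acts faithfully on $V$, a set of $n$ elements, Proposition~\ref{standardAlgos}(10) lists all of its elements in time polynomial in $n$ and $|C_{\GL_d(p)}(H)|$, hence polynomial in $n^k$. The argument is a direct application of classical representation theory, so no step constitutes a real obstacle; the only delicate moment is lining up the exponent $m_j^2$ coming from $|\GL_{m_j}(D_j)|$ with the exponent $m_j$ afforded by the subspace inclusion $U_j^{m_j} \subseteq V$, which is exactly what makes the bound come out to $n^k$ rather than something weaker.
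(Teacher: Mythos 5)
Your proof is correct, but it takes a genuinely different route from the paper's. You identify \( C_{\GL_d(p)}(H) \) with the unit group of \( \operatorname{End}_H(V) \), invoke Schur's lemma and Wedderburn's theorem to write this as \( \prod_j \GL_{m_j}(D_j) \) for finite fields \( D_j \), and then carefully balance the exponent \( m_j^2 \) from \( |M_{m_j}(D_j)| \) against the inclusion \( U_j^{m_j} \subseteq V \) to land on \( n^k \); the bookkeeping checks out. The paper instead argues elementarily: fix one nonzero vector \( v_i \) in each irreducible summand \( V_i \); irreducibility over \( \GF(p) \) means the orbit \( v_i^H \) generates \( V_i \) as an abelian group, and the commuting relation \( (v_i^h)^c = (v_i^c)^h \) shows that any \( c \in C_{\GL_d(p)}(H) \) is completely determined by the \( k \)-tuple \( (v_1^c, \dots, v_k^c) \), of which there are at most \( n^k \). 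The paper's argument is shorter and needs no structure theory (in particular no grouping of the \( V_i \) by isomorphism type and no appeal to finiteness of division rings), while yours yields strictly more information, namely the exact isomorphism type of the centralizer as a direct product of general linear groups over extension fields. Both handle the enumeration step identically, via Lemma~\ref{ronyaiAlgo} followed by brute-force listing.
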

\begin{proof}
	Take a decomposition of \( V \) into irreducible \( H \)-modules
	\[ V = V_1 \oplus \dots \oplus V_k \]
	and fix arbitrary nonzero vectors \( v_i \in V_i \), \( i = 1, \dots, k \).
	Since \( V_i \) is an irreducible \( H \)-module over \( \GF(p) \), it is generated by \( v_i^H \)
	as an abelian group, so \( V \) is generated by all classes \( v_i^H \), \( i = 1, \dots, k \), as an abelian group.

	Now, let \( c \in C_{\GL_d(p)}(H) \), and set \( u_i = v_i^c \), \( i = 1, \dots, k \).
	Fix arbitrary \( i \in \{ 1, \dots, k \} \). Then, for any \( h \in H \) we have
	\[ (v_i^h)^c = (v_i^c)^h = u_i^h, \]
	and hence the action of \( c \) on all elements of the \( H \)-orbit \( v_i^H \) can
	be deduced from the value of \( u_i \). Since \( V \) is generated by orbits \( v_i^H \), \( i = 1, \dots, k \),
	as an abelian group, the action of \( c \) on \( V \) is completely determined by values of \( u_i \), \( i = 1, \dots, k \).

	There are at most \( n^k \) choices for the vectors \( u_i \), \( i = 1, \dots, k \),
	therefore \( |C_{\GL_d(p)}(H)| \leq n^k \). Now one can enumerate all elements of \( C_{\GL_d(p)}(H) \)
	by applying Lemma~\ref{ronyaiAlgo}.
\end{proof}
\begin{corollary}\label{fieldClAlgo}
	Let \( G_0 \leq \GL_d(p) \) and suppose that \( V \) decomposes into \( k \)
	irreducible \( [G_0, G_0] \)-modules over \( \GF(p) \).
	Then for every \( a \) dividing \( d \),
	one can find all groups of the form \( \GamL_a(p^{d/a}) \) containing \( G_0 \) in time polynomial in \( n^k \).
\end{corollary}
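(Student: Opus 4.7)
The plan is to exploit the fact that a subgroup of the form $\GamL_a(p^{d/a})$ inside $\GL_d(p)$ is completely specified by the embedding of its scalar field, which in turn is specified by a generator $c$ of $\GF(p^{d/a})^{\times}$ viewed as an element of $\GL_d(p)$. So the task reduces to finding a polynomial-sized set of candidate generators $c$ and then verifying each candidate using Lemma~\ref{fieldelem}.

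The key structural observation is the following. Suppose $G_0 \leq \GamL_a(p^{d/a})$ for some divisor $a$ of $d$, and write $q = p^{d/a}$. The quotient $\GamL_a(q)/\GL_a(q) \simeq \Aut(\GF(q))$ is cyclic, hence abelian, so $[G_0, G_0] \leq \GL_a(q)$. The scalar subfield $\GF(q)^{\times}$ lies in the center of $\GL_a(q)$, so in particular any generator $c$ of $\GF(q)^{\times}$ commutes with every element of $[G_0, G_0]$. Therefore $c \in C_{\GL_d(p)}([G_0, G_0])$, and the field itself sits inside this centralizer. Consequently, every $\GamL_a(p^{d/a})$ containing $G_0$ is determined by a generator $c$ taken from $C_{\GL_d(p)}([G_0, G_0])$.

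The algorithm is then straightforward. First compute $[G_0, G_0]$, which can be done in polynomial time by Proposition~\ref{standardAlgos}. By Lemma~\ref{centrAlgo}, the centralizer $C_{\GL_d(p)}([G_0, G_0])$ has size at most $n^k$ and all of its elements can be enumerated in time polynomial in $n^k$. For each candidate $c$ in this centralizer and each divisor $a$ of $d$, invoke Lemma~\ref{fieldelem}: first check that $c$ generates the multiplicative group of a subfield isomorphic to $\GF(p^{d/a})$, and then check that $G_0$ acts semilinearly with respect to this field, i.e.\ $G_0 \leq \GamL_a(p^{d/a})$. Each check runs in polynomial time in $n$, and we perform at most $d \cdot n^k$ of them, giving total running time polynomial in $n^k$. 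The output is the collection of all such semilinear groups, which is complete by the observation above.

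The only point requiring care is making sure that the decomposition hypothesis applies to $[G_0, G_0]$ rather than $G_0$ itself: Lemma~\ref{centrAlgo} is invoked with $H = [G_0, G_0]$, and its hypothesis is exactly that $V$ decomposes into $k$ irreducible $H$-modules, which is our standing assumption. Everything else is a direct assembly of the stated lemmas, so no genuine obstacle remains.
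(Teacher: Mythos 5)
Your proposal is correct and follows essentially the same route as the paper: observe that since \( \GamL_a(q)/\GL_a(q) \) is abelian, \( [G_0,G_0] \) lies in \( \GL_a(q) \), so a generator of the scalar field centralizes \( [G_0,G_0] \); then enumerate \( C_{\GL_d(p)}([G_0,G_0]) \) via Lemma~\ref{centrAlgo} and test each candidate with Lemma~\ref{fieldelem}. No issues.
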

\begin{proof}
	Suppose that \( G_0 \leq \GamL_a(p^{d/a}) \) and let \( c \in \GL_d(p) \) be a
	generator of the center~\( Z(\GL_a(p^{d/a})) \). Since \( H = [G_0, G_0] \) lies in  \( \GL_a(p^{d/a}) \),
	the element \( c \) centralizes~\( H \). By Lemma~\ref{centrAlgo}, we can enumerate all elements of \( C_{\GL_d(p)}(H) \)
	in time polynomial in \( n^k \). By Lemma~\ref{fieldelem}, we can check in polynomial time if
	\( c \in C_{\GL_d(p)}(H) \) generates a finite field of order \( p^{d/a} \) and
	if \( G_0 \leq \GamL_a(p^{d/a}) \), where \( \GamL_a(p^{d/a}) \) corresponds to
	the finite field generated by \( c \).
\end{proof}

In order to apply algorithms for embeddings into \( \GamL_a(q) \) in the particular case of
groups from class (T), we first need a more precise analysis of the structure of \( G_0 \) in class (T).

\begin{proposition}\label{tensSL}
	Let \( G \) be an affine permutation group of rank~\( 3 \), such that \( G_0 \) stabilizes a tensor decomposition
	\( V = U \otimes W \) over \( \GF(q) \), where \( \dim U = 2 \) and \( \dim W = m \), \( m \geq 2 \).
	Then one of the following holds.
	\begin{enumerate}[\normalfont (i)]
		\item \( G_0 \leq \GamL_1(q) \);
		\item \( G_0 \) is imprimitive as a linear group;
		\item \( m = 4 \) and \( q = 2 \);
		\item \( \SL(W) \unlhd G_0 \);
		\item \( \SL(U) \unlhd G_0 \) and \( m = 3 \).
	\end{enumerate}
\end{proposition}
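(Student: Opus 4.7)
The plan is to invoke the Liebeck classification of affine rank~3 groups (Theorem~\ref{affrankthree}) and work through its cases. Since $|\Omega| > 3^{12}$, the stabilizer $G_0$ lies in one of the infinite classes (A1)--(A11), and it suffices to show that each class yields one of the conclusions (i)--(v) or is incompatible with the tensor decomposition hypothesis.

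First, classes (A1) and (A2) directly give (i) and (ii). Class (A3) is the tensor-product class and matches the hypothesis under the identification $V_1 \simeq U$, $V_2 \simeq W$; its three internal alternatives map onto (iv) (the generic condition $\SL(V_2) \unlhd G_0^{V_2}$), (iii) (the exceptional case $p = q = 2$, $d = a = 8$, giving $m=4$ and $q=2$), and the small-dimensional exception $\dim V_2 \leq 3$, which yields (v) when $m = 3$ and collapses to (iv) or an earlier case when $m = 2$. For classes (A4) and (A5), I would realize the action over a subfield as a tensor decomposition: in (A4), viewing $V$ as a $\GF(\sqrt{q})$-module factors it as $\GF(q) \otimes_{\GF(\sqrt{q})} W$ with $\SL_m(\sqrt{q}) = \SL(W) \unlhd G_0$, placing us in (iv); in (A5), the analogous realization over $\GF(\sqrt[3]{q})$ gives $\SL_2(\sqrt[3]{q}) = \SL(U) \unlhd G_0$ and $m = 3$, which is (v).

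The remaining task is to rule out classes (A6)--(A11). A natural tool is a subdegree comparison: the set of simple tensors in $U \otimes W$ has cardinality $(q+1)(q^m-1)$ and is $G_0$-invariant, hence must coincide with one of the two nontrivial $G$-orbits on $V \setminus \{0\}$. The subdegree formulas in rows (A6) and (A7) of Table~\ref{subtab}, together with the analogous explicit formulas for (A8)--(A11) available from \cite{skresRank3}, fail to produce the value $(q+1)(q^m-1)$ for any admissible parameters with $n > 3^{12}$, which excludes these classes. The main obstacle is exactly this last step, which requires a careful case-by-case inspection of subdegree polynomials and verifying that the arithmetic identities have no spurious solutions for large $n$; fortunately the explicit forms in Table~\ref{subtab} reduce the comparison to routine manipulation of polynomials in $q$ and $m$.
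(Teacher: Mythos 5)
Your route is genuinely different from the paper's: the paper deduces the proposition in one line from Bamberg--Devillers--Fawcett--Praeger \cite[Theorem~3.2]{bambergRank3}, translating their classes (R0), (I0), (T1)--(T6) for tensor-product rank~3 actions directly into cases (i)--(v), whereas you try to rederive it from Liebeck's Theorem~\ref{affrankthree}. Unfortunately your version has two genuine gaps. First, the exclusion of classes (A6)--(A11) by subdegree comparison fails: the set of nonzero simple tensors has size \( (q+1)(q^m-1) \), and for \( m = 2 \) this equals \( (q+1)(q^2-1) \), which is \emph{exactly} the first subdegree in rows (A6) (with \( m \) even) and (A7) (with \( \epsilon = + \)) of Table~\ref{subtab}. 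This is no accident --- the paper itself points out that \( H_q(2,2) \simeq \VO_4^+(q) \), and \( \Omega_4^+(q) \simeq \SL_2(q) \circ \SL_2(q) \) really does preserve a \( 2 \otimes 2 \) tensor decomposition. So the arithmetic you call ``routine'' does have solutions, and for these groups you would still need a structural argument that \( \SL(W) \unlhd G_0 \); the claimed incompatibility is simply false.

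Second, and more importantly, your treatment of the residual subcase of (A3) with \( \dim V_2 \leq 3 \) is circular: you assert that it ``yields (v) when \( m=3 \) and collapses to (iv) or an earlier case when \( m=2 \),'' but Liebeck's theorem gives no information there beyond the bound on \( \dim V_2 \), so this assertion is precisely the content of the proposition (the paper explicitly remarks that case (v) is the main point, the rest being read off from the descriptions of (A3)--(A5)). Establishing that a linearly primitive rank~3 group with \( \dim V_2 \leq 3 \) and without \( \SL(V_2) \) normal must contain \( \SL(U) \) normally with \( m = 3 \) requires the finer analysis carried out in \cite{bambergRank3}; nothing in Theorem~\ref{affrankthree} or the subdegree tables supplies it. A minor additional point: the proposition as stated carries no lower bound on \( |\Omega| \), so your appeal to \( |\Omega| > 3^{12} \) to discard classes (B) and (C) proves a weaker statement than the one asserted (harmless for the paper's application, but worth flagging).
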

\begin{proof}
	Follows directly from~\cite[Theorem~3.2]{bambergRank3}. In the terminology the authors use,
	\( G_0 \) falls into one of the classes (R0), (I0), (T1)--(T6). Class (R0) corresponds to (i)
	in our statement, class (I0) to (ii), class (T4) implies (iii), classes (T1)--(T3) and (T5) satisfy the conclusion of (iv)
	and class (T6) implies (v).
\end{proof}

Note that the main part of the proposition is~(v), since the rest follows from the description
of the classes (A3)--(A5) (see Theorem~\ref{affrankthree}). We also mention that~\cite{bambergRank3} proves
a lot more about the structure of \( G_0 \), see~\cite[Theorems~3.1 and 3.2]{bambergRank3}.

\begin{lemma}\label{tensMods}
	Suppose that \( G \) lies in class (T). Set \( H = [G_0, G_0] \) and let
	\[ V = V_1 \oplus \dots \oplus V_k, \]
	be a decomposition of \( V \) into a direct sum of irreducible \( H \)-modules over \( \GF(p) \).
	Then \( k \leq 3 \).
\end{lemma}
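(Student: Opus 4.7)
The plan is to apply Proposition~\ref{tensSL}, which gives five possibilities (i)--(v) for the structure of $G_0$. Since $G$ lies in (T), it belongs to one of Liebeck's classes (A3), (A4) or (A5) from Theorem~\ref{affrankthree}; cases (i) and (ii) of Proposition~\ref{tensSL} correspond respectively to $G$ lying in class (A1) (one-dimensional semilinear) and class (A2) (linearly imprimitive), both of which are disjoint from (T), so they cannot arise here. The exceptional case (iii), with $m=4$ and $q=2$, concerns a single family of degree $n=256$ whose $G_0$ is explicitly listed in the classification, and one verifies directly that $H=[G_0,G_0]$ contains a copy of $\SL_2(16)$ acting irreducibly on $V\simeq \GF(16)^2\simeq \GF(2)^8$, so $k=1$.

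The substantive argument is for the generic cases (iv) and (v), which I would treat uniformly. In case (iv), $\SL(W)\unlhd G_0$ with $\dim W = m \geq 2$, and $\SL_m(q)$ is perfect for all relevant pairs $(m,q)$ occurring in (T) of sufficiently large degree, so $\SL(W) = [\SL(W),\SL(W)]\leq H$. The tensor decomposition $V=U\otimes_{\GF(q)} W$, with $\SL(W)$ acting trivially on $U$ and naturally on $W$, gives $V\simeq W^{\oplus \dim U}=W\oplus W$ as $\GF(q)[\SL(W)]$-module, and hence as $\GF(p)[\SL(W)]$-module. The key input is the standard fact that the natural $\SL_m(q)$-module remains irreducible when viewed over $\GF(p)$: any nonzero $\GF(p)$-submodule has $\GF(q)$-span equal to the whole natural module by irreducibility over $\GF(q)$, and the action of the transvection subgroups then forces the submodule itself to exhaust $W$. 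Therefore $V$ is a direct sum of exactly two irreducible $\GF(p)[\SL(W)]$-summands; since $\SL(W)\unlhd H$, Clifford theory guarantees that $V$ is semisimple also as a $\GF(p)[H]$-module and every $\GF(p)[H]$-irreducible in $V$ further decomposes into $\SL(W)$-irreducibles, so $k\leq 2$. Case (v) is entirely analogous: $\SL(U)\unlhd G_0$ is perfect hence contained in $H$, and the restriction $V\simeq U^{\oplus m}=U\oplus U\oplus U$ (as $m=3$) over $\GF(p)[\SL(U)]$ yields $k\leq 3$.

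The main obstacle I anticipate is the irreducibility of the natural $\SL_m(q)$-module over the prime field $\GF(p)$, which although standard does require the two-step argument sketched above (Galois span + transvections). A secondary concern is the careful matching of Proposition~\ref{tensSL}'s case (i)--(iii) with classes (A1), (A2) and the exceptional $(m,q)=(4,2)$ subcase of (A3) used to dispatch them; this is routine given the classification results already invoked in the paper.
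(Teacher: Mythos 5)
Your proof is correct and follows essentially the same route as the paper: use Proposition~\ref{tensSL} to locate a perfect normal subgroup \( \SL(U_2) \unlhd G_0 \) (hence contained in \( H \)) acting on one tensor factor, decompose \( V \) into at most three irreducible modules for that subgroup over \( \GF(p) \), and conclude \( k \leq 3 \). Two minor remarks. First, case~(iii) of Proposition~\ref{tensSL} has degree \( n = 2^8 = 256 < 3^{12} \) and is therefore vacuous under the standing assumption that the degree is sufficiently large; your ad hoc claim that \( H \) contains \( \SL_2(16) \) acting irreducibly there is unsubstantiated (and not obviously what Liebeck's exceptional \( (q,m)=(2,4) \) groups look like), but nothing depends on it. Second, the irreducibility of \( u \otimes W \) over \( \GF(p) \) follows in one line from the transitivity of \( \SL(W) \) on the nonzero vectors of \( W \) — any nonzero \( \GF(p) \)-submodule contains a nonzero vector and hence all of them — which is the argument the paper uses and which covers your case~(v) verbatim; the Galois-span-plus-transvections argument is correct but unnecessary.
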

\begin{proof}
	Let \( V = U_1 \otimes U_2 \) be a tensor decomposition preserved by \( G_0 \).
	By definition of the class (T), \( G_0 \) is primitive as a linear group
	and since \( m \geq 2 \), we have \( a(G) \geq 2 \) (see Table~\ref{graphtab}),
	and consequently \( G_0 \) does not lie in \( \GamL_1(q) \). By Proposition~\ref{tensSL},
	\( G_0 \) contains a normal subgroup \( S = \SL(U_2) \) preserving
	subspaces of the form \( u \otimes U_2 \), for \( u \in U_1 \) (in the notation of Proposition~\ref{tensSL},
	\( U_2 = W \) in case (iv) and \( U_2 = U \) in case (v)). Moreover, \( \dim U_1 \leq 3 \).
	Since \( n \) is large enough, we may assume that \( S \) is neither \( \SL_2(2) \) nor \( \SL_2(3) \).
	Hence \( S \) is perfect and we have \( S \leq H \).

	Now, \( S \) acts transitively on \( U_2 \), so \( u \otimes U_2 \) is an irreducible
	\( S \)-module of \( V \) for every nonzero \( u \). Hence \( V \) admits the following
	decomposition into irreducible \( S \)-modules:
	\[ V = \sum_{i = 1}^l u_i \otimes U_2, \]
	where \( u_1, \dots, u_l \) is a basis of \( U_1 \), in particular, \( l \leq 3 \).
	Since \( S \leq H \) we immediately obtain \( k \leq 3 \).
\end{proof}
\begin{corollary}\label{allgl}
	Suppose that \( G \) lies in class (T).
	Then all possibilities for \( a \) and \( \GamL_a(p^{d/a}) \) such that \( G_0 \leq \GamL_a(p^{d/a}) \)
	can be found in polynomial time.
\end{corollary}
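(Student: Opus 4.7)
The plan is to combine the structural module-count bound from Lemma~\ref{tensMods} with the centralizer-based enumeration procedure of Corollary~\ref{fieldClAlgo}. First I would identify $V$ with the socle of $G$ using the standard algorithms of Proposition~\ref{standardAlgos}, fix a $\GF(p)$-basis of $V$, and compute the conjugation action of each generator of $G_0$ on this basis; this yields an explicit embedding $G_0 \leq \GL_d(p)$. Then I would compute $H = [G_0, G_0]$ in polynomial time via Proposition~\ref{standardAlgos}.

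By Lemma~\ref{tensMods}, the hypothesis that $G$ lies in class (T) forces $V$ to decompose into at most $k = 3$ irreducible $H$-modules over $\GF(p)$. This is precisely the hypothesis of Corollary~\ref{fieldClAlgo}, which then guarantees that for each divisor $a$ of $d$, all subgroups of the form $\GamL_a(p^{d/a}) \leq \GL_d(p)$ containing $G_0$ can be enumerated in time polynomial in $n^k$, and therefore polynomial in $n$ since $k \leq 3$. Iterating over the (at most $d$) divisors of $d$ and collecting all outputs yields the required list in polynomial time.

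No fundamentally new obstacle arises, since the main technical content is already packaged in the preceding lemmas: Lemma~\ref{ronyaiAlgo} provides generators of $C_{\GL_d(p)}(H)$, Lemma~\ref{centrAlgo} combined with Lemma~\ref{tensMods} bounds $|C_{\GL_d(p)}(H)|$ by $n^3$ so that its elements can be listed, and Lemma~\ref{fieldelem} tests for each candidate element whether it generates a field of the desired order and whether $G_0$ acts semilinearly over it. The one subtle observation is that we need not verify the module-count bound $k \leq 3$ in advance: Lemma~\ref{tensMods} guarantees it under the standing hypothesis, so we may safely apply Corollary~\ref{fieldClAlgo} with $k = 3$ and all running-time estimates go through.
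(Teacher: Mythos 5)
Your proposal is correct and follows exactly the paper's route: the paper's proof is the one-line observation that the claim follows from Corollary~\ref{fieldClAlgo} combined with the bound $k \leq 3$ from Lemma~\ref{tensMods}. Your additional remarks on computing $G_0$ and $H=[G_0,G_0]$ and on not needing to verify $k \leq 3$ in advance are accurate elaborations of the same argument.
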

\begin{proof}
	Follows from Corollary~\ref{fieldClAlgo} and Lemma~\ref{tensMods}.
\end{proof}

Next we show that we can check if a given basis is a tensor basis in polynomial time.

\begin{lemma}\label{checkbase}
	Suppose that the field \( \GF(q) \) is given, and let \( v_{ij} \), \( i = 1, 2 \), \( j = 1, \dots, m \)
	be a \( \GF(q) \)-basis of \( V \). Then in polynomial time it is possible to check
	that \( G_0 \) preserves a tensor decomposition \( V = U \otimes W \), and that
	\( v_{ij} = u_i \otimes w_j \), where \( u_1, u_2 \) is some basis of \( U \) and \( w_1, \dots, w_m \) is some basis of \( W \).
\end{lemma}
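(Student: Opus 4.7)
My plan is to reduce the lemma to checking invariance of a single, explicitly enumerable subset of $V$. Using the given basis, identify $V$ abstractly with $\GF(q)^2 \otimes_{\GF(q)} \GF(q)^m$ via $v_{ij} \leftrightarrow u_i \otimes w_j$, and let
\[
T = \Bigl\{\, v = \sum_{i,j} \alpha_{ij} v_{ij} \in V \setminus \{0\} \;:\; (\alpha_{ij}) \in \GF(q)^{2 \times m} \text{ has rank} \leq 1 \,\Bigr\}
\]
be the candidate set of nonzero simple tensors attached to the basis. Its cardinality is $(q+1)(q^m-1) < n$, so $T$ can be listed in polynomial time by running over outer products of nonzero vectors in $\GF(q)^2$ and $\GF(q)^m$.

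The key observation is that $v_{ij}$ is a tensor basis for some tensor decomposition preserved by $G_0$ if and only if $T$ is $G_0$-invariant. One direction is tautological. Conversely, the set of simple tensors of any tensor decomposition $V = U \otimes W$ determines the decomposition (up to an irrelevant swap of factors when $\dim U = \dim W$), so $G_0$-invariance of $T$ yields $G_0$-preservation of the decomposition $V = \langle u_1, u_2\rangle \otimes \langle w_1, \dots, w_m\rangle$ corresponding to our identification, with $v_{ij} = u_i \otimes w_j$ by construction.

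The algorithm is then straightforward. First, enumerate $T$ as above. Second, for each generator $g$ of $G_0$ and each $t \in T$: compute the image $t^g$ as the permutation $g$ applied to the point $t \in \Omega \simeq V$; express $t^g$ in the basis $v_{ij}$ by Gaussian elimination over $\GF(q)$ (feasible because the given field datum provides a generator of $\GF(q)^{\times}$ acting on $V$); and verify that the resulting $2 \times m$ coefficient matrix has rank at most one. Return \emph{true} if all checks succeed and \emph{false} otherwise. Because the number of generators of $G_0$ and the size of $T$ are polynomial in $n$, and each membership test is a polynomial-time linear algebra computation, the whole procedure runs in polynomial time.

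The only nontrivial conceptual step is the equivalence between $G_0$-invariance of $T$ and $G_0$-preservation of the tensor decomposition: this rests on the standard multilinear fact that the variety of rank-one tensors in $U \otimes W$ recovers the two factors uniquely, up to a possible swap when $\dim U = \dim W$, and the boundary case $m = 2$ is harmless since exchanging the tensor factors yields an equivalent decomposition preserved by $G_0$. Once this is noted, the remaining work is routine linear algebra over the explicitly given field $\GF(q)$.
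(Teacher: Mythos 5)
Your proposal is correct and follows essentially the same route as the paper: both identify \( V \) with a formal tensor product via the given basis and verify, for each generator of \( G_0 \), that the set of simple tensors is mapped to itself, which is a polynomial-size check. The only differences are cosmetic — you test membership in the simple-tensor set by a rank-one condition on the coefficient matrix rather than by searching over pairs \( (u'', w'') \), and you add an (unneeded but harmless) justification that invariance of the simple tensors recovers the decomposition.
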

\begin{proof}
	Construct a formal tensor product \( V' = U' \otimes W' \), where \( U' \) has basis \( u_1, u_2 \) and \( W' \)
	has basis \( w_1, \dots, w_m \) over \( \GF(q) \). Let \( \phi : V \to V' \) be an isomorphism defined
	by the rule \( \phi(v_{ij}) = u_i \otimes w_j \), \( i = 1, 2 \), \( j = 1, \dots, m \). Clearly we
	can define the action of \( G_0 \) on \( V' \) by \( \phi(v)^g = \phi(v^g) \), where \( v \in V \), \( g \in G_0 \).

	To finish the proof of the claim it suffices to check that \( G_0 \) preserves a tensor decomposition \( V' = U' \otimes W' \).
	That amounts to checking that for all \( u' \in U' \), \( w' \in W' \) and \( g \in G_0 \) we have
	\( (u' \otimes w')^g = u'' \otimes w'' \) for some \( u'' \in U' \) and \( w'' \in W' \). 
	Notice that we only need to check that for elements \( g \) taken from some generating set of \( G_0 \).
	The size of that generating set can be polynomially bounded in terms of \( n \), and there are at most \( n^4 \)
	choices for \( u', w', u'', w'' \). Thus this check can be performed in polynomial time and the claim follows.
\end{proof}


\begin{lemma}\label{tens2Clgens}
	Let \( G \) lie in class (T), and suppose that the associated field and 
	some tensor basis are known. Then one can compute generators of \( G^{(2)} \)
	in polynomial time.
\end{lemma}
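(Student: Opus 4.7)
The plan is to use the explicit tensor decomposition \(V = U \otimes W\) furnished by the known tensor basis to write down generators for each factor of the structure
\[ G^{(2)} = V \rtimes ((\GL(U) \circ \GL(W)) \rtimes \Aut(F)), \]
extended, in the case \(m = 2\), by an order-two involution swapping the tensor factors. The shape of \(G^{(2)}\) is dictated by Theorem~\ref{class}~(iv)(b) together with the identification \(H_q(2,2) \simeq \VO^+_4(q)\) in the \(m = 2\) case, so the task reduces to realising each listed factor as a collection of explicit permutations of \(V \simeq \Omega\) in polynomial time.

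First I would use the known field \(F = \GF(q)\) and the given basis \(v_{ij} = u_i \otimes w_j\) to identify \(V\) with the formal tensor product \(U \otimes W\), where \(U = \langle u_1, u_2\rangle\) and \(W = \langle w_1, \ldots, w_m\rangle\). Under this identification any \(\GF(q)\)-linear map written out as a matrix in the basis \(\{v_{ij}\}\) translates in polynomial time into a permutation of the \(n\) points of \(V\). Next I would fix an \(O(1)\)-sized generating set of \(\GL_2(q)\) acting on \(U\) (say, a diagonal matrix coming from a generator of \(F^\times\) together with one unitriangular matrix) and an analogous small generating set of \(\GL_m(q)\) acting on \(W\). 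For each generator \(g\) of \(\GL(U)\) I form the operator \(g \otimes 1_W\) on \(V\), and for each generator \(h\) of \(\GL(W)\) the operator \(1_U \otimes h\); jointly these generate the image of \(\GL(U) \times \GL(W)\) in \(\GL(V)\), which is precisely the central product \(\GL(U) \circ \GL(W)\), since the kernel of the tensor-product homomorphism is \(\{(\lambda I_U, \lambda^{-1} I_W) : \lambda \in F^\times\}\).

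To complete the construction I would adjoin the Frobenius map acting coordinate-wise in the \(\{v_{ij}\}\)-basis (this element, possibly together with its powers, supplies the \(\Aut(F)\) factor), translations by each basis vector \(v_{ij}\) (generating the socle \(V\)), and, when \(m = 2\), the linear involution swapping \(v_{12} \leftrightarrow v_{21}\) while fixing \(v_{11}\) and \(v_{22}\), extended \(\GF(q)\)-linearly. The output of the procedure is the permutation group generated by all of the above. I do not anticipate any serious obstacle: every listed generator is an explicit element of \(\AGL(V)\) expressed in the tensor basis, the number of generators is polynomially bounded, and each is a permutation of \(n\) points computable in polynomial time. The point that most deserves a careful sanity-check is the \(m = 2\) case, where one must confirm that the declared involution really does implement the outer automorphism of \(H_q(2, 2) \simeq \VO^+_4(q)\) that flips the two tensor factors; this is immediate from the way the tensor basis was declared.
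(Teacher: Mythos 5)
Your proposal is correct and follows essentially the same route as the paper: write down the tensor-factor generators \(g\otimes 1_W\) and \(1_U\otimes h\) to obtain \(\GL(U)\circ\GL(W)\), adjoin the coordinate-wise Frobenius map for \(\Aut(F)\), and in the \(m=2\) case the basis-swapping involution \(u_i\otimes w_j\mapsto u_j\otimes w_i\). The only cosmetic difference is that you explicitly add the translations generating the socle \(V\) (which the paper leaves implicit) and claim an \(O(1)\)-sized generating set for each \(\GL\)-factor where the paper is content with a polynomially bounded one; neither affects correctness.
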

\begin{proof}
	Let \( G_0 \leq \GamL_a(q) = \GamL(V) \), where \( V = U \otimes W \),
	and let \( u_i \otimes w_j \), \( i = 1, 2 \), \( j = 1, \dots, m \), be the given tensor basis.
	Let \( g_1, \dots, g_s \) and \( h_1, \dots, h_r \) be generators of \( \GL(U) \) and \( \GL(W) \)
	respectively (take, for example, diagonal matrices and transvections, so \( s \) and \( r \) are polynomial in \( n \)).
	The following linear transformations, given by their action on the basis, preserve the tensor product:
	\begin{equation*}
	\begin{split}
		u_i \otimes w_j \mapsto u_i^{g_k} \otimes w_j,\; k = 1, \dots, s,\\
		u_i \otimes w_j \mapsto u_i \otimes w_j^{h_k},\; k = 1, \dots, r.
	\end{split}
	\end{equation*}
	Clearly these transformations generate the central product \( \GL(U) \circ \GL(W) \).

	Since the field \( \GF(q) \) is known, we can add the following semilinear map to our generating set:
	\[
		\sum_{i,j} c_{ij}(u_i \otimes w_j) \mapsto \sum_{i,j} c_{ij}^p(u_i \otimes w_j),
	\]
	where \( p \) is the characteristic of the field. This element generates field automorphisms,
	hence we obtained generators for the group \( (\GL(U) \circ \GL(W)) \rtimes \Aut(\GF(q)) \).
	
	Finally, if \( m = 2 \), add the following linear map, given by its action on the tensor basis:
	\[ u_i \otimes w_j \mapsto u_j \otimes w_i. \]
	This element generates the additional automorphism of order \( 2 \). Since the number of generators
	obtained is bounded polynomially in terms of \( n \) and all computations were performed in polynomial time,
	the claim is proved.
\end{proof}

\begin{lemma}\label{slwalgo}
	Suppose that \( G \) lies in class (T) and preserves a tensor decomposition \( V = U \otimes W \),
	where \( \dim U = 2 \), \( \dim W = m \) and \( m \geq 4 \). Suppose also that the respective field is known.
	Then \( \SL(W) \unlhd G_0 \) and it is possible to find that subgroup in polynomial time.
\end{lemma}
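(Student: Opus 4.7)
The structural statement $\SL(W) \unlhd G_0$ will follow directly from Proposition~\ref{tensSL}. Since $G$ lies in class (T), $G_0$ is linearly primitive and does not sit inside $\GamL_1(q)$, which rules out cases (i) and (ii). Case (iii) has $m = 4$ and $q = 2$, giving $n = 2^8 = 256 < 3^{12}$, excluded by our standing assumption on the degree. Case (v) requires $m = 3$, contradicting $m \geq 4$. Hence we must be in case (iv), and $\SL(W) \unlhd G_0$.

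For the algorithm, note that since the field $\GF(q)$ is given, $m = \tfrac{1}{2}\log_q n$ is determined. The plan is to isolate $\SL(W)$ as the perfect core of an appropriate normal subgroup of $G_0$. Take $\Sigma$ to consist of the simple group $\PSL_m(q)$ together with all cyclic groups of prime order dividing $|G_0|$; since each such prime divides the length of some orbit in a Schreier--Sims stabiliser chain, $\Sigma$ is polynomial-time computable. Compute $K = O_\Sigma(G_0)$ via Proposition~\ref{standardAlgos}, and then iteratively replace $K$ by its derived subgroup $[K, K]$ until the value stabilises. The stable subgroup will be $\SL(W)$. Each iteration runs in polynomial time, and the number of iterations is bounded by the derived length of the initial $K$, which is $O(\log|K|) = O(\log n)$.

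The one substantive point is verifying that this perfect core equals $\SL(W)$ and nothing larger. The inclusion $\SL(W) \leq K$ is immediate: $\SL_m(q)$ has composition factors $\PSL_m(q)$ and primes dividing $\gcd(m, q-1)$, all lying in $\Sigma$. In the opposite direction, $\SL(W)$ must be the only component of $G_0$ whose simple quotient is $\PSL_m(q)$. Indeed, any second such component $E$ would commute with $\SL(W)$ (components of a finite group commute pairwise) and hence embed into $C_{\GL(V)}(\SL(W))$, which by Schur's lemma applied to the $\SL(W)$-module decomposition $V \simeq W \oplus W$ is isomorphic to $\GL(U) \simeq \GL_2(q)$, a group too small to contain any cover of $\PSL_m(q)$ once $m \geq 4$. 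Consequently $K/\SL(W)$ has only abelian composition factors and is solvable, and since $\SL(W)$ is itself perfect, the derived series of $K$ terminates exactly at $\SL(W)$. This uniqueness-of-component step is the main subtlety of the proof, and it is precisely where the hypothesis $m \geq 4$ is used.
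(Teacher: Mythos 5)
Your derivation of $\SL(W) \unlhd G_0$ from Proposition~\ref{tensSL} is exactly the intended one. For the algorithmic part you use the same two tools as the paper---the operator $O_\Sigma$ and derived subgroups---but in the opposite order: the paper first passes to the second derived subgroup $H = G_0''$, which by the containment $G_0 \leq (\GL(U) \circ \GL(W)) \rtimes \Aut(F)$ lands inside $\SL(U) \circ \SL(W)$, and only then applies $O_{\{\PSL_m(q)\}}$ to the quotient $H/(H \cap F^\times)$ to strip off the $\SL(U)$ part; you apply $O_\Sigma$ first (with the cyclic factors thrown into $\Sigma$) and then take the perfect core. Both orderings work, and your version has the mild advantage that the perfect core returns $\SL(W)$ on the nose with no scalar contamination.

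The one step you should tighten is the inference from ``there is no second component with quotient $\PSL_m(q)$'' to ``$K/\SL(W)$ has only abelian composition factors.'' In a general finite group a nonabelian composition factor need not arise from a component (a perfect but non-quasisimple normal section such as $V \rtimes \PSL_m(q)$ with $V$ a faithful module has a $\PSL_m(q)$ composition factor and no corresponding component), so ruling out a second component does not by itself rule out a second $\PSL_m(q)$ factor. The gap is easily closed in this setting, and the cleanest repair is the very containment the paper uses: since $G_0 \leq (\GL(U) \circ \GL(W)) \rtimes \Aut(F)$ and $\dim U = 2 \neq m$, the quotient $K/\SL(W)$ embeds into a group of order $|\GL_2(q)| \cdot |\Aut(\GF(q))| < q^5$, which is too small to have $\PSL_m(q)$ as a section for $m \geq 4$; combined with the fact that all composition factors of $K = O_\Sigma(G_0)$ lie in $\Sigma$, this forces $K/\SL(W)$ to be solvable, and your conclusion $K^{(\infty)} = \SL(W)$ follows. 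With that substitution the argument is complete.
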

\begin{proof}
	The fact that \( \SL(W) \unlhd G_0 \) follows from Proposition~\ref{tensSL}
	(or directly from the description of classes (A3) and (A4)).
	In order to find \( \SL(W) \) explicitly, first compute the second derived subgroup \( H \) of \( G_0 \).
	Set \( F = \GL(q) \). Since \( G_0 \leq (\GL(U) \circ \GL(W)) \rtimes \Aut(F) \), we have \( H \leq \SL(U) \circ \SL(W) \).
	Now, \( \SL(W) \) is precisely the preimage of \( O_{\Sigma}(H/(H \cap F^{\times})) \) in \( H \),
	where \( \Sigma = \{ \operatorname{PSL}_m(q) \} \).
\end{proof}

The following lemma about arbitrary (not necessarily rank~3) permutation groups allows one to ``shift''
subsets of the permutation domain away from other subsets.

\begin{lemma}\label{shiftlem}
	Let \( G \) be a permutation group on \( \Omega \), and let \( \Gamma, \Delta \subseteq \Omega \).
	Then one can compute \( g \in G \) such that \( \Gamma^g \not\subseteq \Delta \) in polynomial time,
	or determine that there is no such \( g \).
\end{lemma}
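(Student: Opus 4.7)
The plan is to reduce the problem to a simple orbit-containment check. The key observation is that no admissible $g$ exists if and only if $\Gamma^g \subseteq \Delta$ for every $g \in G$, which in turn is equivalent to $\gamma^G \subseteq \Delta$ for every $\gamma \in \Gamma$. Indeed, if every orbit $\gamma^G$ lies inside $\Delta$ then of course every image of $\Gamma$ under $G$ lies in $\Delta$; conversely, if some $\gamma \in \Gamma$ has a point $\alpha \in \gamma^G$ outside $\Delta$, then any $g \in G$ with $\gamma^g = \alpha$ witnesses $\Gamma^g \not\subseteq \Delta$.

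Based on this, the algorithm proceeds as follows. First compute the orbits of $G$ on $\Omega$ in polynomial time via Proposition~\ref{standardAlgos}(3), which also provides, for each $\gamma \in \Gamma$, the orbit $\gamma^G$ to which it belongs. Then scan through $\Gamma$ and test, for each $\gamma$, whether $\gamma^G \subseteq \Delta$; this is a linear-time set containment check once the orbits are known. If the containment holds for every $\gamma \in \Gamma$, output that no such $g$ exists. Otherwise, having located some $\gamma \in \Gamma$ together with a point $\alpha \in \gamma^G \setminus \Delta$, invoke Proposition~\ref{standardAlgos}(4) to produce in polynomial time an element $g \in G$ with $\gamma^g = \alpha$, and return $g$; by construction $\gamma^g \notin \Delta$, so $\Gamma^g \not\subseteq \Delta$.

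Every subroutine used above (orbit computation, orbit membership, and the constructive transporter $\alpha \mapsto \beta$ within an orbit) runs in polynomial time by Proposition~\ref{standardAlgos}, and the total work is bounded by a polynomial in $|\Omega|$. There is no serious obstacle here: the lemma is essentially a packaging of standard polynomial-time permutation-group primitives, and its value lies in being invoked later when one needs to constructively shift one subset partly off of another.
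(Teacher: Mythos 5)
Your proof is correct and follows essentially the same route as the paper: reduce to checking whether each orbit $\gamma^G$ for $\gamma \in \Gamma$ lies in $\Delta$, and if some orbit escapes, use the standard transporter algorithm to produce the witnessing $g$. (Your writeup is in fact slightly cleaner, since the paper's proof contains a small typo, taking $\gamma \in \Delta \setminus \gamma_i^G$ where it clearly means $\gamma \in \gamma_i^G \setminus \Delta$.)
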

\begin{proof}
	Let \( |\Omega| = n \) and set \( \Gamma = \{ \gamma_1, \dots, \gamma_k \} \), \( k \leq n \).
	One can compute orbits \( \gamma_i^G \), \( i = 1, \dots, k \), in polynomial time.
	Clearly all orbits lie inside \( \Delta \) if and only if \( \Gamma^g \subseteq \Delta \) for all \( g \in G \),
	so we can test if the required \( g \) exists in polynomial time. Now, if there exists an orbit \( \gamma_i^G \)
	such that \( \gamma_i^G \not\subseteq \Delta \), then take \( \gamma \in \Delta \setminus \gamma_i^G \).
	One can now find \( g \in G \) with \( \gamma = \gamma_i^g \) in polynomial time,
	and \( \Gamma^g \not\subseteq \Delta \).
\end{proof}

\begin{proposition}\label{tensAlgoProof}
	If \( G \) belongs to (T), then we can compute \( G^{(2)} \) in polynomial time.
\end{proposition}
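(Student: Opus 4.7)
My plan is to reduce the problem to Lemma~\ref{tens2Clgens}, which produces generators of \( G^{(2)} \) once the defining field and a tensor basis of \( V \) are available. The task therefore splits into (a) recovering the field \( \GF(q) \) and realizing \( G_0 \leq \GamL_{2m}(q) \leq \GL_d(p) \), and (b) exhibiting explicit vectors forming a tensor basis. For (a) I invoke Corollary~\ref{allgl} to enumerate in polynomial time every candidate pair \( (a, q) \) with \( G_0 \leq \GamL_a(q) \); the rest of the algorithm is executed for each candidate, and at the end I keep the largest output that actually preserves the 2-orbits of \( G \).

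For (b), assuming \( m \geq 4 \), I first use Lemma~\ref{slwalgo} to compute the normal subgroup \( S = \SL(W) \unlhd G_0 \). Since \( S \) acts on \( V = U \otimes W \) as \( 1 \otimes \SL(W) \), its orbits on \( V \setminus \{0\} \) are exactly the punctured subspaces \( (u \otimes W) \setminus \{0\} \) for \( u \) ranging over representatives of the projective space \( \mathbb{P}(U) \); there are \( q + 1 \) such orbits, and each spans the \( m \)-dimensional subspace \( L_u = u \otimes W \). Computing the orbits in polynomial time, I pick two distinct subspaces \( L_1, L_2 \) and fix any basis \( v_1, \dots, v_m \) of \( L_1 \); implicitly \( v_j = u_1 \otimes w_j \) for some \( u_1 \in U \) and some basis \( w_1, \dots, w_m \) of \( W \).

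The delicate step is producing matching vectors \( u_2 \otimes w_j \) in \( L_2 \), since two independently chosen bases of \( L_1 \) and \( L_2 \) will typically correspond to unrelated bases of \( W \). To enforce coherence I pass through the centralizer \( C = C_{\GL(V)}(S) \), which by Schur's lemma is isomorphic to \( \GL(U) \) acting through the \( U \)-factor and, in particular, permutes transitively the set of lines \( L_u \). By Lemma~\ref{tensMods}, \( V \) decomposes into at most three irreducible \( S \)-submodules over \( \GF(p) \), so Lemma~\ref{centrAlgo} lets me enumerate \( C \) in polynomial time; I then pick any \( t \in C \) with \( L_1^t = L_2 \) and set \( v_j' = v_j^t \). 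These vectors are of the form \( u_2 \otimes w_j \) for the same \( w_j \)'s (absorbing the field scalar into the choice of \( u_2 \)), so \( v_1, \dots, v_m, v_1', \dots, v_m' \) is a tensor basis. Its validity is confirmed by Lemma~\ref{checkbase}, after which Lemma~\ref{tens2Clgens} outputs the generators of \( G^{(2)} \).

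The main obstacle is precisely the coherence step above, resolved by exploiting the isomorphism \( C_{\GL(V)}(S) \cong \GL(U) \) together with Lemmas~\ref{tensMods} and~\ref{centrAlgo} to make the centralizer computationally accessible. A secondary issue is the low-dimensional exceptional cases \( m \in \{2, 3\} \) not directly covered by Lemma~\ref{slwalgo}: here \( a(G) \leq 6 \) and \( |G_0| \) is polynomially bounded, so either one replays the same strategy with a suitable normal subgroup extracted from Proposition~\ref{tensSL} (using \( \SL(U) \) instead of \( \SL(W) \) in case (v), for instance), or one observes that these groups fall into class (S) or (Q) and are caught by the parallel algorithms Algorithm~\ref{smallAlgo} or the quadratic-form procedure; the main algorithm then selects the correct answer as the largest valid output.
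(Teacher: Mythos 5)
Your overall architecture matches the paper's (recover the field via Corollary~\ref{allgl}, produce a tensor basis, hand it to Lemma~\ref{checkbase} and Lemma~\ref{tens2Clgens}), but your mechanism for building the tensor basis is genuinely different: you try to recover the $q+1$ subspaces $u\otimes W$ as $\SL(W)$-orbits and then use the centralizer $C_{\GL(V)}(\SL(W))\cong\GL(U)$ to transport a basis of one such subspace coherently to another. The paper instead guesses two vectors spanning a $2$-dimensional subspace $P=U\otimes w$ and uses Lemma~\ref{shiftlem} to accumulate $\SL(W)$-translates of $P$ into a direct sum decomposition $V=P^{g_1}\oplus\dots\oplus P^{g_m}$, which yields the tensor basis directly. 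Your centralizer trick for the ``coherence'' step is correct (every element of $C_{\GL_d(p)}(\SL(W))$ does act as $g\otimes 1$, by Schur's lemma applied to the natural module over the prime field, and Lemma~\ref{centrAlgo} with $k=2$ makes $C$ enumerable), and in principle this is a viable alternative.

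There are, however, two concrete problems. First, your claim that the orbits of $S=1\otimes\SL(W)$ on $V\setminus\{0\}$ are exactly the $q+1$ punctured subspaces $(u\otimes W)\setminus\{0\}$ is false: these subspaces cover only the simple tensors, of which there are $(q+1)(q^m-1)$, far fewer than $q^{2m}-1$. A vector such as $u_1\otimes w_1+u_2\otimes w_2$ with $w_1,w_2$ independent lies in a much larger $S$-orbit whose span is not $m$-dimensional, so the step ``pick two distinct subspaces $L_1,L_2$'' breaks down as written. The fix is easy --- restrict first to the $G_0$-orbit of simple tensors (identifiable from the subdegrees in Table~\ref{subtab}) and decompose \emph{that} into $S$-orbits, or keep only the $S$-orbits of size $q^m-1$ spanning an $m$-dimensional space --- but it must be said. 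Second, your treatment of $m\in\{2,3\}$ is not adequate: Lemma~\ref{slwalgo} does not apply, the claim that these groups land in class (S) would require knowing $G_0$ is $4$-generated (which you do not establish), and class (Q) only covers $m=2$ via $H_q(2,2)\cong\VO_4^+(q)$. The simple and correct move here, which the paper uses, is to note that for $m\le 3$ there are at most $n^6$ ordered $\GF(q)$-bases of $V$, so one can test all of them against Lemma~\ref{checkbase} by brute force.
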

\begin{proof}
	Suppose that \( G_0 \) preserves a tensor decomposition \( V = U \otimes W \)
	over the field \( \GF(q) \) (notice that \( q \) is not necessarily the largest
	power of \( p \) such that \( G_0 \leq \GamL_a(q) \) for some \( a \)). Without loss
	of generality we may assume that \( \dim U = 2 \) and \( \dim W = m \) for some \( m \).
	We claim that in polynomial time it is possible to find the field \( \GF(q) \)
	and a \( \GF(q) \)-basis of \( V \) of the form \( u_i \otimes w_j \), \( i = 1, 2 \), \( j = 1, \dots, m \),
	where the pair \( u_1, u_2 \) is some basis of \( U \) and \( w_1, \dots, w_m \) is some basis of \( W \).
	Given such a basis we can compute \( G^{(2)} \) in polynomial time by Lemma~\ref{tens2Clgens}.

	Set \( q \) to be the largest power of \( p \) dividing the second subdegree,
	and notice that it is the order of the field required, see Table~\ref{subtab}.
	Also let \( m \) be an integer such that \( n = q^{2m} \), and notice that it is equal to the dimension of \( W \)
	over \( \GF(q) \). Observe that by Corollary~\ref{fieldClAlgo} and Lemma~\ref{tensMods}, we can find all possibilities
	for the field \( \GF(q) \) in polynomial time.

	Now, if \( m \leq 3 \), there are at most \( n^6 \) bases of \( V \) over \( \GF(q) \).
	Using Lemma~\ref{checkbase}, we can check all such bases by brute force in polynomial time, so we are done in this case.
	From now on we may assume that \( m \geq 4 \).

	Choose two vectors \( v_1, v_2 \in V \) and assume they have the form
	\( v_1 = u_1 \otimes w \), \( v_2 = u_2 \otimes w \) for
	some linearly independent (over \( \GF(q) \)) vectors \( u_1, u_2 \in U \). Then \( v_1 \) and \( v_2 \)
	span the subspace \( P = U \otimes w \).
	By Lemma~\ref{slwalgo}, we can compute \( \SL(W) \leq G_0 \) in polynomial time.
	If \( g \in \SL(W) \), then \( P^g = U \otimes w^g \), therefore we have either \( P = P^g \) or \( P \cap P^g = 0 \)
	depending on whether \( w \) and \( w^g \) are linearly dependent or not. Since \( \SL(W) \) acts transitively
	on nonzero vectors of \( W \), there exists \( g \in \SL(W) \) such that \( P \cap P^g = 0 \), 
	and we can find such \( g \) by applying Lemma~\ref{shiftlem} with \( \Gamma = \Delta = P \).
	In general, if \( Q = P^{g_1} \oplus P^{g_2} \oplus \dots \oplus P^{g_k} \),
	\( g_1, \dots, g_k \in \SL(W) \), \( g_1 = 1 \), is a proper subspace of \( V \), then for some \( g \in \SL(W) \) the subspace
	\( P^g \) does not lie in \( Q \) and hence \( Q \cap P^g = 0 \). This allows us to add another direct summand
	to \( Q \) and to proceed further. After the final step we will end up with precisely \( m \) direct summands.

	Now, since \( V = P^{g_1} \oplus \dots \oplus P^{g_m} \), where \( g_1 = 1 \),
	the set \( v_1^{g_1}, v_2^{g_1}, \dots, v_1^{g_m}, v_2^{g_m} \) is a basis for \( V \).
	Notice that \( v_i^{g_j} = u_i \otimes w^{g_j} \) and that vectors \( w^{g_1}, \dots, w^{g_m} \) form a basis for \( W \).
	Setting \( w_j = w^{g_j} \), \( j = 1, \dots, m \), we obtain that \( v_i^{g_j} = u_i \otimes w_j \) is a required tensor basis for \( V \).
	Since there are at most \( n^2 \) choices for vectors \( v_1, v_2 \) and since we can check if the basis obtained is correct
	using Lemma~\ref{checkbase}, the claim follows.
\end{proof}

We provide a summary of the algorithm for class (T) in the following figure (Algorithm~\ref{tensorAlgo}).
Notice that in contrast to the algorithm for class (S), this algorithm may not output ``failure'' if
the input group fails to lie in (T).

\begin{algorithm}[p]
	\SetAlgoLined
	\KwIn{generators of an affine rank~3 group \( G \) of sufficiently large degree \( n = p^d \)}
	\KwOut{generators of \( G^{(2)} \) if \( G \) lies in (T)}
	\medskip

	Check that \( G \) is affine and construct \( G_0 \)\;
	\smallskip

	Set \( q \) to be the largest power of \( p \) dividing the second subdegree\;
	Find an integer \( m \) such that \( n = q^{2m} \)\;
	\smallskip

	\For(\tcp*[f]{Corollary~\ref{allgl}}){all embeddings of \( G_0 \) into \( \GamL_{2m}(q) \)}{
		Find the associated field \( \GF(q) \)\;
		\If{\( m \leq 3 \)}{
			\For{all bases of \( V \) over \( \GF(q) \)}{
				\If(\tcp*[f]{Lemma~\ref{checkbase}}){it is a tensor basis}{
					\Return{\( G^{(2)} \)}\tcp*[f]{Lemma~\ref{tens2Clgens}}
					}
				}
				\tcp{There is no appropriate basis}
				\Return{failure}
			}
		\smallskip

		Find the subgroup \( \SL(W) \leq G_0 \) \tcp*{Lemma~\ref{slwalgo}}
		\For{all \( v_1, v_2 \in V \setminus 0 \)}{
			Compute the span \( P = \langle v_1, v_2 \rangle \)\;
			Set \( g_1 = 1 \)\;
			\For{\( i = 2, \dots, m \)}{
				Find \( g_i \in \SL(W) \) such that\\
				\( P^{g_1} \oplus \dots \oplus P^{g_{i-1}} \cap P^{g_i} = 0 \) \tcp*{Lemma~\ref{shiftlem}}
				}
			\If(\tcp*[f]{Lemma~\ref{checkbase}}){\( v_1^{g_1}, v_2^{g_1}, \dots, v_1^{g_m}, v_2^{g_m} \) is a tensor basis}{
				\Return{\( G^{(2)} \)}\tcp*[f]{Lemma~\ref{tens2Clgens}}
				}
			}
		}
		\smallskip

		\tcp{No embedding gives a tensor product basis}
		\Return{failure}
		\caption{Tensor product case\label{tensorAlgo}}
\end{algorithm}

\subsubsection{Quadratic form case}\label{subsubsQuadratic}

Let \( V \) be a vector space over \( \GF(q) \).
Recall that a mapping \( \kappa : V \to \GF(q) \) is called a quadratic form, if
\( \kappa(\lambda v) = \lambda^2 \kappa(v) \) for all \( \lambda \in \GF(q) \) and \( v \in V \),
and the mapping \( f : V \times V \to \GF(q) \) defined by
\[ f(x, y) = \kappa(x+y) - \kappa(x) - \kappa(y) \]
is a bilinear form (the bilinear form associated to \( \kappa \)). The quadratic
form \( \kappa \) is non-degenerate if the radical \( \operatorname{rad}(f) \) is trivial, where
\[ \operatorname{rad}(f) = \{ x \in V \mid f(x, y) = 0 \text{ for all } y \in V \}. \]
There are two types of quadratic forms on vector spaces of even dimension: the \( + \) type and \( - \) type;
the precise definition will not be important for us, so we refer the interested reader to~\cite[Section~2.5]{kleidman} for details.

If \( G \) lies in class (Q), then \( G = V \rtimes G_0 \), where \( V \) is a \( 2m \)-dimensional
vector space over \( \GF(q) \), and \( G_0 \leq \GamL_{2m}(q) \). The group \( G_0 \)
acts on a non-degenerate quadratic form \( \kappa : V \to \GF(q) \) of type \( \epsilon = \pm \)
by semisimiliarities, i.e. \( \kappa(v^g) = \lambda\kappa(v)^\alpha \) for all \( v \in V \),
where \( \lambda \in \GF(q) \), \( \alpha \in \Aut(\GF(q)) \) depend only on \( g \in G_0 \).
Also, \( G_0 \) contains either \( \SU_m(q) \) or \( \Omega^{\epsilon}_{2m}(q) \) as a normal subgroup,
depending on whether \( G \) lies in (A6) or (A7).

There are exactly two orbits of \( G_0 \) on nonzero vectors, one orbit consisting of isotropic vectors
(i.e. those \( v \) with \( \kappa(v) = 0 \)), and another consisting of non-isotropic vectors.
The number of non-isotropic vectors is divisible by \( q \), while the number of isotropic vectors is not
(see, for instance,~\cite[Section~3.6 and formula~(3.27)]{wilson}),
in particular, one can determine isotropic vectors in polynomial time just by taking an orbit of appropriate size.

In order to compute the \( 2 \)-closure, we will show that either \( G_0 \) lies in \( \GamL_4(q) \),
and \( G^{(2)} \) can be found by brute-force, or the quadratic form \( \kappa \) can be recovered,
and \( G^{(2)} \) can be constructed explicitly. We start with preliminary lemmata which will allow us
to recover the field and find embeddings of \( G_0 \) into \( \GamL_a(q) \).

Recall that a prime \( r \) dividing \( x^k - 1 \), \( k \geq 1 \), is called a \emph{primitive prime divisor}
if \( r \) does not divide \( x^m - 1 \) for \( 1 \leq m < k \). We remind a special case of Zsigmondy's theorem.

\begin{lemma}[\cite{zsigmondy}]\label{zsigm}
	The number \( x^k - 1 \), where \( x, k \geq 2 \), has a primitive prime divisor
	except when \( x = 2 \) and \( k = 6 \), or \( x+1 \) is a power of two and \( k = 2 \).
\end{lemma}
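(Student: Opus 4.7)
The plan is to prove Zsigmondy's theorem via cyclotomic polynomials, since this gives the cleanest separation between ``generic'' primes of $x^k - 1$ and the arithmetic obstructions that produce the two exceptional families. Recall the factorization
\[
x^k - 1 = \prod_{d \mid k} \Phi_d(x),
\]
where $\Phi_d$ is the $d$-th cyclotomic polynomial. I would work in two layers: a divisibility lemma to certify that an arbitrary prime factor of $\Phi_k(x)$ is automatically primitive, and a size estimate to force $\Phi_k(x)$ to have such a factor.

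First I would establish the following standard auxiliary fact: if $r$ is a prime dividing $\Phi_k(x)$, then either $r$ is a primitive prime divisor of $x^k - 1$, or $r \mid k$ and moreover $r$ is the largest prime factor of $k$, and in the latter case $r$ divides $\Phi_k(x)$ only to the first power. The proof runs by considering the multiplicative order $e$ of $x$ modulo $r$: one has $e \mid k$, and if $e < k$ then $r \mid \Phi_e(x)$, so $\Phi_k$ and $\Phi_e$ share a root modulo $r$, which by differentiating $x^k - 1$ and chasing the resulting congruence forces $r \mid k/e$. A closer bookkeeping of the $r$-adic valuation (via the lifting-the-exponent lemma, or a direct expansion of $\Phi_k$ at a root of $x^e \equiv 1$) pins down the ``$r$ largest and appearing to the first power'' refinement.

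The second layer is a lower bound on $\Phi_k(x)$. Writing $\Phi_k(x) = \prod_{\zeta}(x - \zeta)$ over primitive $k$-th roots of unity, each factor has modulus at least $x - 1$, so $\Phi_k(x) \ge (x-1)^{\varphi(k)}$; for $k \ge 3$ one can in fact refine this to $\Phi_k(x) > x^{\varphi(k)/2}$ by pairing conjugate roots. Combined with the previous layer, the absence of a primitive prime divisor forces $\Phi_k(x)$ to be a power of the largest prime factor $r$ of $k$, and by the valuation refinement actually $\Phi_k(x) \mid k$, so $\Phi_k(x) \le k$. Comparing with the lower bound $\Phi_k(x) \ge (x-1)^{\varphi(k)}$ and using elementary bounds on $\varphi(k)$ eliminates all $(x,k)$ with $k \ge 3$ except a tiny explicit list, which one then checks by hand; only $(x,k) = (2,6)$ survives.

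The case $k = 2$ is separate and short: $x^2 - 1 = (x-1)(x+1)$, and an odd prime divisor of $x+1$ is automatically primitive, while the prime $2$ is never primitive (it divides $x - 1$ whenever $x$ is odd, and $x^2 - 1$ is odd when $x$ is even, so no primitive $r = 2$ occurs for $k = 2$ either unless $x = 2$, which is also fine). Therefore a primitive prime divisor fails to exist exactly when $x + 1$ is a power of $2$. The expected main obstacle is the bookkeeping in the second layer: one needs the lower bound on $\Phi_k(x)$ to be sharp enough that the finite residual list of potential exceptions is small and can be cleared by direct computation, and the valuation statement ``$r \| \Phi_k(x)$'' must be proved cleanly enough that $\Phi_k(x) > k$ really does force a primitive prime divisor.
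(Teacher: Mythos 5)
This lemma is quoted in the paper as a classical result with a citation to Zsigmondy's 1892 paper; no proof is given or intended, so there is nothing internal to compare against. Your sketch is the standard modern proof via cyclotomic polynomials (the Birkhoff--Vandiver-style argument), as opposed to Zsigmondy's original, more computational treatment, and its overall architecture is sound: the dichotomy ``a prime factor of \( \Phi_k(x) \) is either primitive or the largest prime \( r \) of \( k \), in which case \( r \,\|\, \Phi_k(x) \) for \( k \geq 3 \)'' is correct and provable exactly as you indicate, and combining it with a lower bound on \( \Phi_k(x) \) does reduce everything to a finite check plus the separate \( k=2 \) analysis, which you handle correctly. Two points deserve more care than the sketch gives them. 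First, the bound \( \Phi_k(x) \geq (x-1)^{\varphi(k)} \) is vacuous precisely in the hardest case \( x = 2 \), so the refinement \( \Phi_k(x) > x^{\varphi(k)/2} \) is doing all the work there; but the per-pair estimate \( |x-\zeta|\,|x-\bar\zeta| > x \) fails for the conjugate pair nearest \( 1 \) when \( x = 2 \) and \( k \) is large (\( 5 - 4\cos(2\pi/k) < 2 \) once \( \cos(2\pi/k) > 3/4 \)), so you need a global argument — e.g.\ \( \log\Phi_k(x) = \varphi(k)\log x + \sum_{d\mid k}\mu(k/d)\log(1-x^{-d}) \) with the error term bounded absolutely — which then leaves a slightly larger but still finite list of small \( \varphi(k) \) to check by hand. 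Second, the valuation claim \( r \,\|\, \Phi_k(x) \) genuinely fails for \( k = 2 \) (e.g.\ \( \Phi_2(7) = 2^3 \)), which is exactly why that case must be, and is, split off; just make sure the refinement is only invoked for \( k \geq 3 \). With those details supplied, the proof is complete and recovers both exceptional families.
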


\begin{lemma}\label{qcentr}
	Let \( G = V \rtimes G_0 \) lie in class (Q), and let \( S \unlhd G_0 \), where \( S = \SU_m(q) \) or \( \Omega_{2m}^{\pm}(q) \),
	\( m \geq 2 \),	depending on whether \( G \) lies in (A6) or (A7) respectively.
	Suppose that \( |V| > 2^{12} \).
	Then \( V \) decomposes into a direct sum of \( k \) irreducible \( S \)-modules over \( \GF(p) \),
	where \( k \leq 2 \) when \( m = 2 \), and \( k = 1 \) when \( m > 2 \).
\end{lemma}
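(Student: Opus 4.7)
The plan is to attack the decomposition in two stages: first decompose $V$ as a $\GF(q)[S]$-module, then descend from $\GF(q)$ to $\GF(p)$. The two key inputs are (a) the natural module of $S$ is absolutely irreducible over its field of definition in the generic range (which the bound $|V|>2^{12}$ ensures by excluding the tiny exceptional configurations), and (b) restricting scalars from $\GF(q)=\GF(p^e)$ to $\GF(p)$ turns an absolutely irreducible module into an irreducible one precisely when its $e$ Frobenius twists are pairwise non-isomorphic as $\GF(q)[S]$-modules.

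For the $\GF(q)$-analysis, consider the two cases separately. In case (A7), $V$ is the natural $2m$-dimensional orthogonal module of $S=\Omega^{\epsilon}_{2m}(q)$, which is absolutely irreducible over $\GF(q)$ for every $m\geq 2$ in the range under consideration; the borderline cases $\Omega_4^+(q)\simeq \SL_2(q)\circ\SL_2(q)$ acting on $U\otimes W$ and $\Omega_4^-(q)\simeq \PSL_2(q^2)$ acting on the restriction of its natural $\GF(q^2)$-module to $\GF(q)$ still yield an irreducible $V$. In case (A6), $V$ is the restriction of scalars from $\GF(q^2)$ to $\GF(q)$ of the natural Hermitian $m$-dimensional module of $S=\SU_m(q)$; the nontrivial Galois automorphism of $\GF(q^2)/\GF(q)$ sends the natural module to its dual, and via the graph automorphism of $A_{m-1}$ these are isomorphic as $\SU_m(q)$-modules precisely when $m\leq 2$. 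Hence $V$ is already absolutely irreducible as $\GF(q)[S]$-module whenever $m\geq 3$, while for $m=2$ the isomorphism $\SU_2(q)\simeq\SL_2(q)$ makes $V$ split as a sum of two copies of the natural $\SL_2(q)$-module over $\GF(q)$.

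For the descent, let $W$ be any absolutely irreducible $\GF(q)[S]$-constituent of $V$. If the restriction of $W$ to $\GF(p)$ were reducible, Clifford/Galois theory would force a Frobenius twist $W^{(p^i)}$ with $0<i<e$ to be $\GF(q)[S]$-isomorphic to $W$, which in turn would provide a conjugating element in $\GL(W)$ realising the corresponding field automorphism of $S$ by inner action. This is impossible for the quasi-simple groups $\SL_2(q)$, $\SU_m(q)$ and $\Omega^{\epsilon}_{2m}(q)$: their field automorphisms generate the field part of $\Out(S)$ and are never realised by matrix conjugation. Thus each absolutely irreducible $\GF(q)$-summand of $V$ stays irreducible after restriction to $\GF(p)$, and combining with the first step gives $k=1$ whenever $m\geq 3$ and $k\leq 2$ when $m=2$.

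The main obstacle in writing this out rigorously is the book-keeping around the small exceptional configurations: one has to confirm that the hypothesis $|V|>2^{12}$ is indeed enough to eliminate all low-order pathologies (in particular the non-quasi-simple case $\Omega_4^{\pm}$ and the coincidence $\SU_2\simeq\SL_2$), and that the Frobenius-twist argument goes through uniformly across $\SL_2(q)$, $\SU_m(q)$ and $\Omega^{\epsilon}_{2m}(q)$ despite their different outer automorphism structures.
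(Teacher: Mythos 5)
Your route is genuinely different from the paper's. The paper does not analyse Frobenius twists at all: it invokes Clifford theory (via \cite[Proposition~2.10.6]{kleidman}, $S$ is irreducible over its defining field $F$, so $F^{\times}S$ is irreducible over $\GF(p)$ and $V$ is a sum of $k$ isomorphic irreducible $\GF(p)[S]$-modules of dimension $s$), embeds $S$ into $\GL_s(p)^k$, and then uses a Zsigmondy primitive prime divisor of $p^{2(m-1)r}-1$ (resp.\ $p^{2mr}-1$) dividing $|S|$ to force $s\geq 2(m-1)r$, whence $k=2mr/s\leq m/(m-1)$. That argument needs only order divisibility and the hypothesis $|V|>2^{12}$ to dodge the Zsigmondy exceptions; your argument instead needs the module-theoretic facts about Galois descent and the structure of $\Aut(S)$. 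Both can work, and yours gives sharper information (it pins down $k$ exactly in each subcase), but it carries more unproven baggage: in particular the assertion that field automorphisms of $\SL_2(q)$, $\SU_m(q)$, $\Omega^{\epsilon}_{2m}(q)$ are never induced by conjugation in $\GL(W)$ is true but is exactly the content of the decomposition $\Aut(S)=\operatorname{Inndiag}(S)\rtimes(\Phi\times\Gamma)$ and deserves a citation rather than a one-line dismissal, especially for the non-quasi-simple case $\Omega_4^{+}(q)$ which your phrasing excludes.

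There is also one concrete error. In case (A6) with $m\geq 3$ you claim $V$ is \emph{absolutely} irreducible as a $\GF(q)[S]$-module. It is not: $V$ is the restriction of scalars of the natural $\GF(q^2)$-module $N$, and since $N\not\cong N^{(q)}\cong N^{*}$, this restriction is irreducible over $\GF(q)$ but has endomorphism algebra $\GF(q^2)$, hence is not absolutely irreducible. Your descent step is stated only for an ``absolutely irreducible $\GF(q)[S]$-constituent $W$'', and in this case no such constituent exists, so the step does not apply as written. The fix is routine --- descend directly from $\GF(q^2)$ to $\GF(p)$ and check that all $2r$ Frobenius twists $N^{(p^i)}$, $0\leq i<2r$ with $q=p^{r}$, are pairwise non-isomorphic, the twist at $i=r$ being $N^{*}$ and the others being handled by the field-automorphism argument --- but as it stands the (A6), $m\geq 3$ branch has a gap.
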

\begin{proof}
	Set \( F = \GF(q^2) \) in the case \( S = \SU_m(q) \) and \( F = \GF(q) \) in the case \( S = \Omega_{2m}^{\pm}(q) \).
	By \cite[Proposition~2.10.6]{kleidman}, \( S \) acts irreducibly on \( V \) over \( F \), in particular,
	\( F^{\times}S \) acts irreducibly on \( V \) over \( \GF(p) \). Since \( S \) is a normal subgroup of
	\( F^{\times}S \), the module \( V \) is completely reducible and decomposes
	into a direct sum of isomorphic irreducible \( S \)-modules over \( \GF(p) \):
	\[
		V = U_1 \oplus \dots \oplus U_k.
	\]
	Clearly \( k = \frac{\dim V}{\dim U_1} \), where dimension is computed over \( \GF(p) \).
	We wish to derive an upper bound on \( k \).

	Set \( q = p^r \), where \( p \) is a prime. Since \( |F : \GF(p)| \leq 2r \), we obtain \( k \leq 2r \).
	Set \( s = \dim U_1 \).
	As \( S \) acts faithfully on \( V \), it embeds into the following direct product
	\[ \GL(U_1) \times \dots \times \GL(U_k) \simeq \GL_s(p)^k, \]
	and, in particular, \( |S| \) divides \( |\GL_s(p)|^k \).
	Recall that the order of \( \GL_s(p) \) is equal to \( p^{s(s-1)/2}\prod_{i=1}^s (p^s-1) \).

	We consider the case \( S = \SU_m(q) \) first. By \cite[Table~2.1.C]{kleidman}, the order of \( S \)
	is divisible by \( q^m-(-1)^m \) and \( q^{m-1}-(-1)^{m-1} \).
	If \( m \) is even, then \( q^{m-1}-(-1)^{m-1} = p^{(m-1)r}+1 \).
	By Lemma~\ref{zsigm}, the number \( p^{2(m-1)r} - 1 \) has a primitive prime divisor unless
	\( 2(m-1)r = 2 \), or \( p = 2 \) and \( 2(m-1)r = 6 \), where the latter situation can be discarded
	since \( |V| = p^{2mr} \) is large enough. Now, \( 2(m-1)r = 2 \) implies \( r = 1 \) and \( m = 2 \); we can
	use the previously obtained bound \( k \leq 2r = 2 \) in this case. Thus we can assume that
	\( p^{2(m-1)r} - 1 \) has a primitive prime divisor.

	Notice that the primitive prime divisor of \( p^{2(m-1)r} - 1 \)
	must divide \( p^{(m-1)r}+1 \) and, hence, must divide \( |S| \).  As \( |S| \) divides \( |\GL_s(p)|^k \),
	this prime divisor must divide one of the numbers \( p^i-1 \), where \( s = 1, \dots s \), which implies \( 2(m-1)r \leq s \).
	Recall that \( \dim V = 2mr \), and therefore \( k = \frac{\dim V}{\dim U_1} = \frac{2mr}{s} \leq \frac{2mr}{2(m-1)r} = \frac{m}{m-1} \).
	It can be readily seen that \( k \leq 2 \) if \( m = 2 \), and \( k \leq 1 \) if \( m > 2 \), as required.

	If \( m \) is odd, then	\( q^m-(-1)^m = p^{mr}+1 \) divides \( |S| \). Again, since \( 2mr > 2 \)
	(and the case \( p = 2 \), \( 2mr = 6 \) is excluded by \( |V| > 2^6 \))
	the number \( p^{2mr} - 1 \) has a primitive prime divisor dividing \( p^{mr}+1 \), and hence dividing \( |S| \).
	We obtain \( 2mr \leq s \), and therefore \( k \leq \frac{2mr}{2mr} = 1 \) in this case.
	
	We now consider the case \( S = \Omega_{2m}^{\pm}(q) \). By \cite[Table~2.1.C]{kleidman}, the order of \( S \)
	is divisible by \( q^{2m-2}-1 = p^{2(m-1)r}-1 \). If \( r = 1 \), then \( S \) is defined over \( \GF(p) \)
	and is already irreducible on \( V \), hence we may assume \( r \geq 2 \). Now \( 2(m-1)r > 2 \),
	and \( p^{2(m-1)r}-1 \) has a primitive prime divisor.
	Proceeding as in the previous case we derive \( 2(m-1)r \leq s \). Recall that \( \dim V = 2mr \), since \( |F| = p^r \),
	so \( k = \frac{2mr}{s} \leq \frac{2mr}{2(m-1)r} = \frac{m}{m-1} \). Thus \( k \leq 2 \) if \( m = 2 \),
	and \( k \leq 1 \) if \( m > 2 \), as claimed.
\end{proof}

We mention that the case \( k = 2 \) can indeed happen, for instance, \( \SU_2(q) \)
has an invariant two-dimensional \( \GF(q) \)-subspace in its action on \( \GF(q^2)^2 \)
(see an example at the beginning of Section~3.6 in~\cite{wilson} for more details).

\begin{corollary}\label{qfield}
	Let \( G \) lie in class (Q). Then all possibilities for \( a \) and \( \GamL_a(p^{d/a}) \) such that \( G_0 \leq \GamL_a(p^{d/a}) \)
	can be found in polynomial time.
\end{corollary}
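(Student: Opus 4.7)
The plan is to mimic the derivation of Corollary~\ref{allgl} from Corollary~\ref{fieldClAlgo} and Lemma~\ref{tensMods}, substituting Lemma~\ref{qcentr} for Lemma~\ref{tensMods}. What must be supplied is a bound on the number of irreducible summands of $V$ over the subgroup on which Lemma~\ref{centrAlgo} is invoked; given such a bound, we simply follow the recipe of Corollary~\ref{fieldClAlgo}.

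First, I would identify the normal subgroup $S \unlhd G_0$, with $S = \SU_m(q)$ or $S = \Omega^{\pm}_{2m}(q)$ according to whether $G$ belongs to (A6) or (A7). The point stabilizer $G_0$ is readily computed from $G$, and $S$ can be extracted in polynomial time via Proposition~\ref{standardAlgos}, for instance by forming $G_0/Z(G_0)$, locating its (simple) socle and lifting back; the isomorphism type of that socle also tells us which of the two possibilities for $S$ we are in. Since $S$ is perfect, $S \leq [G_0, G_0]$, and whenever $G_0 \leq \GamL_a(p^{d/a})$ we have $[G_0, G_0] \leq \GL_a(p^{d/a})$, so in particular every generator of $Z(\GL_a(p^{d/a}))$ centralizes $S$.

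Second, I would feed this into the argument of Corollary~\ref{fieldClAlgo} with $H = S$ playing the role of the ``field-centralizing'' subgroup: the proof of that corollary only uses that $H$ lies inside $\GL_a(p^{d/a})$ and that $V$ admits a decomposition into few irreducible $H$-modules, both of which hold for $H = S$. Lemma~\ref{qcentr} supplies the crucial bound $k \leq 2$ on the number of irreducible $S$-summands of $V$ over $\GF(p)$, so by Lemma~\ref{centrAlgo} the centralizer $C_{\GL_d(p)}(S)$ contains at most $n^2$ elements and can be listed in polynomial time using Lemma~\ref{ronyaiAlgo}. For each candidate element $c$ of this centralizer and each divisor $a$ of $d$, Lemma~\ref{fieldelem} is then used to decide whether $c$ generates a copy of $\GF(p^{d/a})$ and whether $G_0$ in fact lies in the $\GamL_a(p^{d/a})$ determined by that field. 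Collecting all successful pairs $(a, \GamL_a(p^{d/a}))$ yields every desired embedding.

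The main obstacle is the preparatory bookkeeping: one has to argue that $S$ plays the same role as $[G_0, G_0]$ did in Corollary~\ref{fieldClAlgo}, rather than trying to bound the number of $[G_0,G_0]$-irreducible summands directly, for which I do not see a clean route. Once this substitution is made and one observes that $S \leq [G_0, G_0] \leq \GL_a(p^{d/a})$ and that Lemma~\ref{qcentr} controls precisely the $S$-decomposition, there is no further difficulty and the claim reduces to the enumeration strategy already used in Corollary~\ref{fieldClAlgo}.
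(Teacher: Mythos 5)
Your core ingredients --- the bound \( k \le 2 \) from Lemma~\ref{qcentr}, the perfectness of \( S \), and the enumeration machinery of Lemma~\ref{centrAlgo} and Lemma~\ref{fieldelem} --- are exactly the ones the paper uses, and your plan does work. The one point where you diverge is in insisting on running the centralizer computation on \( S \) itself because you ``do not see a clean route'' to bounding the number of irreducible \( [G_0,G_0] \)-summands. That route is in fact immediate: since \( S \) is perfect, \( S = [S,S] \le [G_0,G_0] \); since \( G_0 \) is irreducible, \( V \) is a completely reducible \( [G_0,G_0] \)-module by Clifford's theorem, and every irreducible \( [G_0,G_0] \)-summand contains an irreducible \( S \)-submodule, so the \( [G_0,G_0] \)-length of \( V \) is at most its \( S \)-length, which is at most \( 2 \) by Lemma~\ref{qcentr}. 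Hence Corollary~\ref{fieldClAlgo} applies verbatim with \( H = [G_0,G_0] \), which is directly computable by Proposition~\ref{standardAlgos}; this is the paper's one-line proof, and it never needs \( S \) as an explicit group. Your substitution \( H = S \) is mathematically harmless (the proof of Corollary~\ref{fieldClAlgo} indeed only needs some perfect normal subgroup inside \( \GL_a(p^{d/a}) \) with few irreducible summands), but it makes the computation of \( S \) load-bearing, and the recipe you sketch for it is not quite right: the socle of \( G_0/Z(G_0) \) need not be simple, since \( S \cap Z(G_0) \) can be a proper subgroup of \( Z(S) \), in which case \( SZ(G_0)/Z(G_0) \) is quasisimple with nontrivial centre and the socle acquires abelian minimal normal subgroups. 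If you do want \( S \) explicitly, extract it as a term of the derived series of \( G_0 \) (as the paper does later in the proof of Lemma~\ref{gens}); better still, sidestep the issue entirely as above.
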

\begin{proof}
	Since \( n \) is large enough, \( \SU_m(q) \) and \( \Omega_{2m}^{\pm}(q) \) are perfect,
	see, for instance,~\cite[Proposition~2.9.2 and remark after Proposition~2.10.6]{kleidman}.
	The claim now follows from Lemma~\ref{qcentr} and Corollary~\ref{fieldClAlgo}.
\end{proof}

\begin{proposition}\label{simpRecog}
	If \( G \) is a simple unitary or orthogonal permutation group, then one can compute
	a pair of elements generating \( G \) in polynomial time.
\end{proposition}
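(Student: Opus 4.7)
The plan is to first compute $|G|$ and identify the isomorphism type of $G$ using Proposition~\ref{standardAlgos}, items~(1) and~(9); since $G$ is simple this tells us exactly which simple unitary or orthogonal group we are dealing with, along with its order. With $|G|$ known, the predicate ``$\langle g,h\rangle = G$'' for a candidate pair $(g,h)$ reduces to computing $|\langle g,h\rangle|$ (again via Proposition~\ref{standardAlgos}(1)) and comparing against $|G|$; this verification runs in polynomial time. It therefore suffices to produce some generating pair in polynomial time.

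The existence of such a pair is classical: every finite simple group is 2-generated, and by results of Kantor--Lubotzky and Liebeck--Shalev the proportion of pairs $(g,h) \in G \times G$ that generate a simple classical group $G$ is bounded below by a positive constant, in fact tending to $1$ with $|G|$. Combined with the standard polynomial-time procedure for producing nearly uniform random elements of a permutation group (see~\cite{seress}), this immediately yields a Las~Vegas polynomial-time algorithm: draw random pairs, apply the generation test, and return the first success.

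To make the procedure deterministic, I would fix, using the now-known isomorphism type, two small conjugacy classes $\mathcal{C}_1, \mathcal{C}_2$ of $G$ (for instance an involution class together with a class of regular semisimple elements of a prescribed prime order) known from the classification of maximal subgroups of classical groups to contain a generating pair. Representatives of such classes can be obtained by raising elements of suitable orders to appropriate powers, elements of the required orders being found by a polynomially bounded search.

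The main obstacle is that $|G|$ is in general super-polynomial in the permutation degree $n=|\Omega|$, so one cannot enumerate all of $G$ and examine every pair. The remedy is the approach above: one examines only polynomially many candidate pairs drawn from two carefully chosen conjugacy classes, rather than all of $G \times G$; alternatively, one may invoke a polynomial-time constructive recognition algorithm for classical permutation groups, which directly outputs a standard two-element generating set in time polynomial in $n$.
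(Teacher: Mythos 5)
Your main line of argument does not establish the proposition as stated. The random-pair approach (positive, even asymptotically full, proportion of generating pairs, plus nearly uniform random elements) gives only a Las~Vegas algorithm, whereas the paper needs a deterministic polynomial-time procedure. Your proposed derandomization is where the real gap lies: even granting that two conjugacy classes $\mathcal{C}_1,\mathcal{C}_2$ containing a generating pair can be named from the isomorphism type, you never explain which \emph{polynomially many} pairs to test or why one of them must generate. Given a representative $c_1$, the conjugates of $c_2$ number $|G|/|C_G(c_2)|$, which is super-polynomial in $n$ in general, so ``examine polynomially many candidate pairs drawn from the two classes'' is an assertion, not an algorithm. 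Likewise, ``elements of the required orders being found by a polynomially bounded search'' is not justified: one cannot enumerate $G$ when $|G|$ is super-polynomial in $n$, and producing an element of a prescribed order deterministically is itself a nontrivial task.

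Your closing sentence, however, is essentially the paper's actual proof: when $|G|>n^8$ one applies Kantor's polynomial-time constructive recognition (\cite[Theorem~9.3]{kantorSylow}) to obtain an explicit isomorphism onto the natural matrix representation, and then pulls back the explicit two-element matrix generating sets of Taylor~\cite{taylorMatrix} and Rylands--Taylor~\cite{rylandsOrth}; when $|G|\leq n^8$ one finds a generating pair by brute force (your verification step via order comparison is exactly what is needed there). If you promote that remark from an aside to the main argument, cite the constructive recognition result, and add the small-order case (needed because the recognition theorem requires $|G|$ large relative to $n$), the proof is complete; as written, the argument you actually develop does not go through.
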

\begin{proof}
	Let \( n \) be the degree of \( G \). If \( |G| \leq n^8 \), one can find 
	required generators by brute-force.
	If \( |G| > n^8 \), one can compute an isomorphism from \( G \)
	into a classical group given in its natural matrix representation~\cite[Theorem~9.3]{kantorSylow}.
	Now, explicit pairs of matrices generating unitary and orthogonal groups were found in~\cite{taylorMatrix}
	and~\cite{rylandsOrth}, and the claim follows.
\end{proof}

We note that pairs of generators for all Chevalley groups were computed by Steinberg~\cite{steinbergGens}
in terms of the root structure, but we require specific matrices to be able to lift them to generators of the original permutation group.

\begin{lemma}\label{gens}
	Let \( G \) lie in class (Q). Then either \( G_0 \leq \GamL_4(q) \) or \( G_0 \) is 6-generated
	and one can find such a set of generators in polynomial time.
\end{lemma}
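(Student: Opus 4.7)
The plan is to branch on whether $a(G) \le 4$. Applying Corollary~\ref{qfield} we enumerate in polynomial time all pairs $(a, \GamL_a(p^{d/a}))$ containing $G_0$; if some such pair has $a \leq 4$ we output the first alternative. Otherwise $a(G) \ge 5$, and inspecting the rows of Table~\ref{graphtab} corresponding to (A6) and (A7) forces $m \ge 3$, which in turn guarantees that the normal subgroup $S \unlhd G_0$ (namely $\SU_m(q)$ or $\Omega_{2m}^{\epsilon}(q)$) is a perfect quasi-simple group and that $G_0/Z(G_0)$ is almost simple with socle $S/Z(S)$.

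I would then assemble six generators as follows. Since $G_0$ is primitive as a linear group (classes (A6) and (A7) both exclude the semilinear and imprimitive types), Schur's lemma applied to $S$ places $C_{\GL(V)}(S)$ inside the multiplicative group of a field, so $Z(G_0)\leq C_{\GL(V)}(S)$ is cyclic and contributes a single generator, computed via Proposition~\ref{standardAlgos}. Next, compute the socle $\bar S$ of $G_0/Z(G_0)$, pull it back to $S\cdot Z(G_0)\leq G_0$, and take the derived subgroup to recover $S$; identify $S/Z(S)$ as a simple unitary or orthogonal group using item~9 of Proposition~\ref{standardAlgos} and apply Proposition~\ref{simpRecog} to produce two elements $\bar g_1,\bar g_2$ generating $S/Z(S)$. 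Because $S$ is perfect, any lifts $g_1,g_2\in S$ already generate $S$: the subgroup $H=\langle g_1,g_2\rangle$ satisfies $HZ(S)=S$, and hence $S=[S,S]=[HZ(S),HZ(S)]\leq H$. Finally consider the quotient $G_0/(S\cdot Z(G_0))$. It embeds into $\Out(S/Z(S))$, whose order is polynomial in $n$ for the classical groups at hand (a bounded product of diagonal and graph automorphisms with a cyclic factor of order at most $\log_p q$ from field automorphisms); by Lemma~\ref{almsimpGens} this quotient is 3-generated, and three such generators can be obtained by brute-force enumeration of its elements and lifted back to $G_0$.

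Concatenating the generator of $Z(G_0)$, the two generators of $S$, and the three lifts from $G_0/(S\cdot Z(G_0))$ produces a generating set of size at most six for $G_0$, and every step above runs in time polynomial in $n$. I expect the main technical obstacle to be cleanly extracting $S$ through the cyclic centre: one must verify that the socle of $G_0/Z(G_0)$ really is the image of $S$ (a consequence of the Liebeck classification recorded in Theorem~\ref{affrankthree}) and that the derived subgroup of its preimage is exactly $S$ rather than some proper central extension, together with the explicit polynomial bound on $|\Out(S/Z(S))|$ needed to justify the coset enumeration within the claimed time budget.
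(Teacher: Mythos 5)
Your overall architecture matches the paper's: dispose of the case $a(G)\le 4$ via Corollary~\ref{qfield}, then build six generators as (scalar part) $+$ (two generators of the quasisimple normal subgroup, via Proposition~\ref{simpRecog}) $+$ (three generators of an outer part, via Lemma~\ref{almsimpGens}). Your lifting argument for perfect $S$ ($H Z(S)=S$ forces $H=S$) is correct and even a little cleaner than what the paper writes. But there is a genuine gap: you consistently use $Z(G_0)$ where the argument needs the full scalar subgroup $C_{G_0}(S)=G_0\cap C_{\GL(V)}(S)$. These two subgroups differ whenever $G_0$ contains scalars that are not fixed by the field automorphisms present in $G_0$ — e.g.\ $G_0=\GF(q)^{\times}\Omega^{\epsilon}_{2m}(q)\rtimes\langle\phi\rangle$ with $q$ a proper power of $p$, where $G_0\cap C\supseteq\GF(q)^{\times}$ but $Z(G_0)$ only contains the $\phi$-fixed scalars. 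The kernel of the natural map $G_0\to\Out(S/Z(S))$ is $S\cdot C_{G_0}(S)=S(G_0\cap C)$, not $S\cdot Z(G_0)$; so your quotient $G_0/(S\cdot Z(G_0))$ does \emph{not} embed into $\Out(S/Z(S))$ — it has an extra cyclic normal subgroup of order up to roughly $(q-1)/(p-1)$ — and your appeal to Lemma~\ref{almsimpGens} for its $3$-generation collapses. A cyclic-by-($3$-generated) group may need four generators, so the count of six is not established.

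The same substitution also breaks your extraction of $S$: the socle of $G_0/Z(G_0)$ need not be the image of $S$. Normal subgroups of $G_0$ inside $G_0\cap C$ that are not central contribute abelian minimal normal subgroups to $G_0/Z(G_0)$, and if $Z(S)\not\le Z(G_0)$ (possible for $\SU_m(q)$, whose centre the field automorphisms move) then $SZ(G_0)/Z(G_0)$ is quasisimple with nontrivial centre and is not contained in the socle at all. The paper's route avoids both problems: it obtains $S$ as the stable term of the derived series of $G_0$ (valid because $G_0/S$ is solvable and $S$ is perfect), computes $C=C_{\GL(V)}(S)$ by Rónyai's algorithm (Lemma~\ref{ronyaiAlgo}), and then runs your 1+2+3 count with $G_0\cap C$ in place of $Z(G_0)$: one generator for the cyclic group $G_0\cap C$, two preimages of generators of $\overline{S}=S/(S\cap C)$ (realised as a faithful permutation group on the lines of $V$ over $F$, a point you also gloss over when invoking Proposition~\ref{simpRecog}), and three preimages from $G_0/(S(G_0\cap C))\hookrightarrow\Out(\overline{S})$. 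With that substitution your proof goes through.
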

\begin{proof}
	Let \( S \unlhd G_0 \), where \( S \) is \( \SU_m(q) \) or \( \Omega_{2m}^{\epsilon}(q) \), depending on whether
	\( G \) lies in (A6) or (A7). By Lemma~\ref{qcentr}, either \( m = 2 \) or \( S \) acts irreducibly
	on the underlying vector space. In the first case \( a(G) \leq 4 \), hence \( G_0 \leq \GamL_4(q) \) (see Table~\ref{graphtab}).
	In the second case \( S \) acts irreducibly, hence the centralizer \( C = C_{\GL(V)}(S) \)
	is a multiplicative group of a finite field, and thus cyclic.

	To find six generators of \( G_0 \) in polynomial time,
	observe that \( S \) can be found in polynomial time by computing a sufficient term of derived series.
	The centralizer \( C \) can be found in polynomial time by Lemma~\ref{ronyaiAlgo},
	and hence the associated field \( F \) can be recovered. Since the size of \( C \) is polynomially bounded,
	we can compute \( S \cap C \) in polynomial time. The simple group \( \overline{S} = S/(S \cap C) \) acts faithfully
	on the set of lines of the vector space \( V \) over \( F \), and therefore possesses a faithful permutation representation
	of degree less than \( n \). One can now apply Proposition~\ref{simpRecog} to find a pair of generators.

	Note that \( S(G_0 \cap C)/(G_0 \cap C) \simeq \overline{S} \). Now,
	\[ S(G_0 \cap C)/(G_0 \cap C) \unlhd G_0/(G_0 \cap C) \simeq G_0C/C \leq \Aut(S) \leq \Aut(\overline{S}), \]
	where the last embedding follows since \( S \) is perfect. Therefore \( G_0/(S(G_0 \cap C)) \) is a subgroup of the outer automorphism
	group of \( \overline{S} \). It is 3-generated by Lemma~\ref{almsimpGens}, and its size can be polynomially bounded in terms of \( n \)
	(see~\cite[Table~5]{atlas}),
	hence we can find three generators of \( G_0/(S(G_0 \cap C)) \) in polynomial time by brute-force. 
	The group \( G_0 \cap C \) is cyclic so we can easily find its generator, and we found two generators
	of \( \overline{S} = S/(S \cap C) \) earlier, so by taking preimages we find three generators of \( S(G_0 \cap C) \).
	By taking preimages once more, we obtain six generating elements of \( G_0 \), as claimed.
\end{proof}

The following lemma will be used to derive the value of the quadratic form on non-isotropic vectors.
\begin{lemma}\label{qformRestore}
	Let \( G_0 \) be a subgroup of \( \GamL_a(q) = \GamL(V) \) preserving a quadratic form \( \kappa : V \to \GF(q) \),
	and suppose that the field \( \GF(q) \) is known.
	Suppose we are given generators \( g_1, \dots, g_6 \) of \( G_0 \), a vector \( v \in V \),
	the value \( \kappa(v) \in \GF(q) \), scalars \( \lambda_i \in \GF(q) \) and automorphisms \( \alpha_i \in \Aut(\GF(q)) \),
	\( i = 1, \dots, 6 \), such that \( \kappa(u^{g_i}) = \lambda_i\kappa(u)^{\alpha_i} \)
	for all \( u \in V \) and \( i = 1, \dots, 6 \).
	Then we can compute in polynomial time the value of \( \kappa(u) \) for all vectors \( u \) from the orbit \( v^{G_0} \).
\end{lemma}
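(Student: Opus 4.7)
The plan is a straightforward breadth-first traversal of the orbit $v^{G_0}$, propagating the value of $\kappa$ along the way using the given semisimilarity relations. The hypothesis gives us, for each generator $g_i$, a closed-form formula
\[
    \kappa(u^{g_i}) = \lambda_i \kappa(u)^{\alpha_i},
\]
so once $\kappa$ is known on any vector $u$, it is determined on all of $u^{g_1}, \dots, u^{g_6}$ by a single multiplication and a single field automorphism evaluation, both of which run in polynomial time since $\GF(q)$ is known.

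Concretely, I would maintain a table indexed by vectors in $V$ recording the currently computed value of $\kappa$, initialized with the single entry $v \mapsto \kappa(v)$, together with a queue seeded by $v$. While the queue is nonempty, pop a vector $u$, and for each $i = 1, \dots, 6$ compute $u' = u^{g_i}$; if $u'$ has not yet been recorded, set its $\kappa$-value to $\lambda_i \kappa(u)^{\alpha_i}$ and push $u'$ onto the queue. Termination and correctness: the set of recorded vectors is closed under the action of each $g_i$ and contains $v$, so it contains all of $v^{G_0}$; conversely every recorded vector is obtained from $v$ by a word in the generators, so it lies in $v^{G_0}$.

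The complexity analysis is immediate: $|v^{G_0}| \leq |V| = n$, and for each orbit element we perform at most six generator applications followed by a constant number of arithmetic operations in $\GF(q)$, giving a total running time polynomial in $n$. The hypothesis that $G_0$ preserves $\kappa$ in the prescribed semisimilarity sense guarantees that whatever word in the $g_i$ takes $v$ to a given $u$, the resulting value of $\kappa(u)$ is the same, so the table is well-defined regardless of the BFS traversal order.

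There is no real obstacle here: the only thing that could go wrong conceptually would be inconsistency between different BFS paths leading to the same vector, and this is ruled out by the hypothesis since $\kappa(u^{g_i}) = \lambda_i \kappa(u)^{\alpha_i}$ holds identically on $V$. Thus the whole lemma reduces to the polynomial-time computability of orbit enumeration together with polynomial-time field arithmetic in $\GF(q)$.
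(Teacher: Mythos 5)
Your proposal is correct and follows essentially the same approach as the paper: both propagate $\kappa$ from $v$ along the generators $g_1,\dots,g_6$ using the identity $\kappa(u^{g_i})=\lambda_i\kappa(u)^{\alpha_i}$ until the set of labelled vectors is closed under all generators, at which point it equals $v^{G_0}$, with at most $n$ iterations. Your explicit remark on consistency across different traversal paths is a harmless addition; it is automatic since the hypothesis guarantees every propagated value equals the true value of $\kappa$.
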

\begin{proof}
	We will increase the number of vectors where \( \kappa \) is known step by step.
	Denote by \( \Delta_j \subseteq v^{G_0} \) the set of vectors \( u \) for which \( \kappa(u) \) is known after
	\( j \) steps of the algorithm. Since \( v \in v^{G_0} \) and we are given \( \kappa(v) \) as input,
	we set \( \Delta_0 = \{ v \} \).

	Suppose we made \( j \) steps of the algorithm. If \( \Delta_j^{g_i} \subseteq \Delta_j \)
	for all \( i = 1, \dots, 6 \), then \( \Delta_j = v^{G_0} \) and we are done.
	Otherwise, we can find some \( u \in \Delta_j \) and \( i \in \{ 1, \dots, 6 \} \)
	such that \( u^{g_i} \not\in \Delta_j \). We can compute the value of \( \kappa \) on \( u^{g_i} \), indeed
	\[ \kappa(u^{g_i}) = \lambda_i\kappa(u)^{\alpha_i}, \]
	where the right hand side can be computed since \( \kappa(u) \) is known. We set \( \Delta_{j+1} = \Delta_j \cup \{ u^{g_i} \} \),
	and proceed with the algorithm.

	Clearly this procedure will require at most \( n \) steps to stop, and since each step can be trivially performed in polynomial time,
	the claim follows.
\end{proof}

\begin{lemma}\label{qform}
	Let \( G_0 \) be a subgroup of \( \GamL_a(q) = \GamL(V) \) and suppose that the field \( \GF(q) \) is known.
	Given a function \( \kappa : V \to \GF(q) \) we can check in polynomial time if \( \kappa \) is
	a non-degenerate quadratic form	and if \( G_0 \) acts on \( \kappa \) by semisimiliarities.
	Moreover, we can find the associated bilinear form in polynomial time.
\end{lemma}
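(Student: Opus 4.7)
The plan is to verify each property of $\kappa$ directly by iterating over all relevant elements of $V$, $\GF(q)$, and $\Aut(\GF(q))$; since every such set has size polynomial in $n = |V|$ (indeed $|\GF(q)| \leq n$ and $|\Aut(\GF(q))| \leq \log_p n$), the naive checks already run in polynomial time.

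First I would verify that $\kappa$ is a quadratic form. For every pair $(\lambda, v) \in \GF(q) \times V$ I check $\kappa(\lambda v) = \lambda^2 \kappa(v)$; this is $O(qn)$ evaluations. Next I tabulate the function $f(x,y) = \kappa(x+y) - \kappa(x) - \kappa(y)$ on all pairs, which takes $O(n^2)$ time, and check that it is bilinear by verifying $f(x+y, z) = f(x,z) + f(y,z)$ and $f(\lambda x, y) = \lambda f(x, y)$ on the appropriate triples. If these pass, $\kappa$ is a quadratic form and the associated bilinear form $f$ is already stored. To check non-degeneracy, I compute $\operatorname{rad}(f)$ as the solution set of the linear system $\{x \in V : f(x, e_i) = 0 \text{ for all basis vectors } e_i\}$, and confirm that it is trivial.

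To check that $G_0$ acts on $\kappa$ by semisimilarities, it suffices to check the condition on a generating set of $G_0$ (whose size is polynomially bounded). For each generator $g$ I pick some vector $v_0 \in V$ with $\kappa(v_0) \neq 0$ (which exists whenever $\kappa \not\equiv 0$; if $\kappa \equiv 0$ then non-degeneracy already fails). For each of the $\leq \log_p n$ automorphisms $\alpha \in \Aut(\GF(q))$, the scalar $\lambda$ is forced to equal $\kappa(v_0^g) / \kappa(v_0)^\alpha$, so I compute this candidate $\lambda$ and then verify $\kappa(u^g) = \lambda \kappa(u)^\alpha$ for every $u \in V$. If some pair $(\lambda, \alpha)$ works for each generator, then $G_0$ acts on $\kappa$ by semisimilarities; otherwise it does not.

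There is no real obstacle here beyond bookkeeping: the content is really just a sequence of brute-force verifications whose complexity is controlled by $|V|$, $|\GF(q)|$, and $|\Aut(\GF(q))|$, all of which are polynomial in $n$. The only mildly delicate point is choosing a vector $v_0$ with $\kappa(v_0) \neq 0$ in the semisimilarity step, but once non-degeneracy is established such a vector is easily found by scanning $V$.
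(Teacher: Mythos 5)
Your proposal is correct and follows essentially the same route as the paper: compute the associated form via $f(x,y)=\kappa(x+y)-\kappa(x)-\kappa(y)$, check non-degeneracy by computing the radical directly, and verify the semisimilarity condition only on generators by trying the polynomially many choices of $\alpha\in\Aut(\GF(q))$ and $\lambda\in\GF(q)$ (your refinement of solving for $\lambda$ from a single non-isotropic vector is a harmless optimization over the paper's exhaustive search).
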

\begin{proof}
	The last claim follows from the fact that \( f(u, v) = \kappa(u+v)-\kappa(u)-\kappa(v) \).
	It is easy to compute \( \operatorname{rad}(f) \) and check that \( \kappa \) is a non-degenerate quadratic form
	by straightforward algorithms. In order to check that \( G_0 \) acts on \( \kappa \) by semisimiliarities, it suffices to do that only
	for generators of \( G_0 \). If \( g \in G_0 \), it is easy to test if for all \( v \in V \) we have
	\( \kappa(v^g) = \lambda\kappa(v)^\alpha \), for some \( \lambda \in \GF(q) \) and \( \alpha \in \Aut(\GF(q)) \)
	depending only on \( g \) (indeed, we need only to try out a polynomially bounded number of scalars
	\( \lambda \) and automorphisms \( \alpha \)). This provides the required algorithm.
\end{proof}

\begin{lemma}\label{qform2Clgens}
	Let \( G \) lie in class (Q), and suppose that the associated field, bilinear and quadratic forms are known.
	Then one can compute generators of \( G^{(2)} \) in polynomial time.
\end{lemma}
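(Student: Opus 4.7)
The plan is to produce explicit generators of \( G^{(2)} \) as permutations of~\( V \). By Theorem~\ref{class}(iv)(c) we have \( G^{(2)} = F^{2m} \rtimes \GamO^{\epsilon}_{2m}(q) \), so the task reduces to writing down a polynomial-size generating set for \( \GamO^{\epsilon}_{2m}(q) \) acting as semilinear maps on \( V \), together with translations by a basis of~\( V \).

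First I would standardise \( \kappa \). Using the supplied bilinear form \( f \) and the values of \( \kappa \), a straightforward Gram--Schmidt-style Witt decomposition runs in polynomial time and produces a basis of \( V \) in which \( \kappa \) takes a fixed standard shape: \( m \) hyperbolic pairs if \( \epsilon = + \), and \( m-1 \) hyperbolic pairs plus a two-dimensional anisotropic summand if \( \epsilon = - \). With respect to this basis any classical explicit generating set for \( O^{\epsilon}_{2m}(q) \) (for instance Siegel transformations and reflections, or the Rylands--Taylor matrices cited in Proposition~\ref{simpRecog}) can be written down as an \( O(1) \)-size matrix set in \( \GL_{2m}(q) \).

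To enlarge this to \( \GamO^{\epsilon}_{2m}(q) \) I would add two further generators realising the quotient \( \GamO^{\epsilon}_{2m}(q)/O^{\epsilon}_{2m}(q) \): a diagonal similitude whose multiplier is a chosen generator of \( F^{\times} \) (easy to write in the Witt basis), and a semilinear map corresponding to the Frobenius automorphism of \( F \) (obtained by acting as \( c \mapsto c^p \) on coordinates in the standard basis, with a small correction in characteristic two and type \( - \) if needed). Each of the \( O(m) \) generators obtained so far is a concrete semilinear map on \( V \), which is converted into a permutation of the \( n \) vectors of \( V \) in polynomial time by tabulating its images. Finally, I would append the \( 2m \) translations \( v \mapsto v + b \) for \( b \) running over the Witt basis; together with the generators above these produce \( V \rtimes \GamO^{\epsilon}_{2m}(q) = G^{(2)} \).

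The main obstacle is essentially bookkeeping: the standard generators for \( O^{\epsilon}_{2m}(q) \) and for the similitude quotient are given in textbook form relative to a fixed model of \( (V, \kappa) \), and must be correctly transported through the change-of-basis supplied by the Witt decomposition; some extra care is also required in characteristic two when defining \( \kappa \), reflections and the \( \Aut(F) \)-lift. Once this is settled, the total number of generators is \( O(m) \), each encoded by its list of images on the \( n \) vectors of~\( V \), and every step runs in polynomial time.
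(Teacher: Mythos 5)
Your proposal is correct and follows essentially the same route as the paper: identify \( G^{(2)} = F^{2m} \rtimes \GamO^{\epsilon}_{2m}(q) \) via Theorem~\ref{class}, compute a standard (Witt) basis from the known field and forms in polynomial time, and then write down explicit generators of \( \GamO^{\epsilon}_{2m}(q) \) together with translations. The paper simply delegates both the basis construction and the explicit generators to Kleidman--Liebeck (Proposition~2.5.3 and Sections~2.7--2.8), whereas you spell out the same steps in more detail.
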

\begin{proof}
	Let \( F = \GF(q) \) be the associated field. By Theorem~\ref{class}, we have \( G^{(2)} = F^{2m} \rtimes \GamO_{2m}^{\epsilon}(q) \),
	\( \epsilon = \pm \), for appropriate \( m \), so it suffices to find generators of \( \GamO_{2m}^{\epsilon}(q) \).
	Since the field and forms are known, we can construct the standard basis of the space
	in polynomial time; the usual procedure can be easily adopted into a polynomial-time algorithm,
	see, for example, the proof of~\cite[Proposition~2.5.3]{kleidman}.
	Now generators can be written down explicitly, see~\cite[Sections~2.7 and~2.8]{kleidman}.
\end{proof}

\begin{proposition}
	If \( G \) belongs to (Q), then we can find \( G^{(2)} \) in polynomial time.
\end{proposition}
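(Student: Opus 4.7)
The plan is to reduce to the case where the field, the set of isotropic vectors and an explicit quadratic form are all known, and then invoke Lemma~\ref{qform2Clgens}. By Lemma~\ref{gens} either \( G_0 \leq \GamL_4(q) \), in which case \( |\AGL_4(q)| \) is polynomially bounded by Lemma~\ref{aglsize} and we can recover \( G^{(2)} \) by brute force over \( \AGL_4(q) \) (using Corollary~\ref{qfield} to enumerate candidate embeddings), or we obtain an explicit list \( g_1, \dots, g_6 \) of generators of \( G_0 \). From now on I assume the latter. Using Corollary~\ref{qfield} I enumerate the (polynomially many) embeddings \( G_0 \hookrightarrow \GamL_a(q) \), so the associated field \( F = \GF(q) \) and the action on \( V \) are available.

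Next I identify the two nonzero \( G_0 \)-orbits on \( V \): by the remark preceding Lemma~\ref{zsigm} (divisibility by \( q \)), the orbit of isotropic vectors is the one whose size is not divisible by \( q \), so this distinction is found in polynomial time. On the isotropic orbit set \( \kappa \equiv 0 \). Fix a non-isotropic representative \( v \); its orbit \( v^{G_0} \) is the remaining non-diagonal orbit. The heart of the construction is to find the value of \( \kappa \) on \( v \) together with the semisimilarity parameters of \( \kappa \) under each generator. There are only \( q-1 \) possible values for \( \kappa(v) \in F^{\times} \), and at most \( (q \cdot |\Aut F|)^6 \) possible tuples \( (\lambda_i,\alpha_i)_{i=1}^{6} \) with \( \lambda_i \in F \) and \( \alpha_i \in \Aut(F) \); all of this is polynomial in \( n \) since \( q \leq n \) and \( |\Aut F| \leq \log_p n \).

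For each such guess I apply Lemma~\ref{qformRestore} to propagate \( \kappa \) from \( v \) to the entire orbit \( v^{G_0} \) in polynomial time, and combine with \( \kappa \equiv 0 \) on the isotropic orbit and \( \kappa(0) = 0 \) to obtain a candidate function \( \kappa : V \to F \). Lemma~\ref{qform} then checks in polynomial time whether this candidate is a non-degenerate quadratic form on which \( G_0 \) acts by semisimilarities, and also produces the associated bilinear form. By the description of class (Q) some guess must succeed, so this search terminates with a valid \( (\kappa, f) \). Finally Lemma~\ref{qform2Clgens} writes down generators of \( G^{(2)} = F^{2m} \rtimes \GamO_{2m}^{\epsilon}(q) \) in polynomial time.

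The only real obstacle is ensuring that the propagation of \( \kappa \) via Lemma~\ref{qformRestore} covers enough of \( V \) to let Lemma~\ref{qform} verify semisimilarity at every generator and every vector; this is automatic because non-isotropic vectors form a single \( G_0 \)-orbit, isotropic values are fixed to zero, and the associated bilinear form on all of \( V \) is recovered from \( \kappa \) itself. Since the number of guesses, the cost per guess, and the cost of the final construction are each polynomial in \( n \), the total running time is polynomial.
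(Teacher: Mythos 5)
Your main branch (the $6$-generated case) is essentially the paper's own argument: enumerate the embeddings $G_0 \leq \GamL_a(q)$ via Corollary~\ref{qfield}, identify the isotropic orbit by divisibility of the subdegrees by $q$, guess $\kappa(v)$ together with the semisimilarity parameters $(\lambda_i,\alpha_i)$ of the six generators, propagate with Lemma~\ref{qformRestore}, verify with Lemma~\ref{qform}, and finish with Lemma~\ref{qform2Clgens}. That part is correct.

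The gap is in your first branch. When $G_0 \leq \GamL_4(q)$ you claim $G^{(2)}$ can be recovered by brute force over $\AGL_4(q)$; this presupposes $G^{(2)} \leq \AGL_4(q)$, which is false in general. Take $G$ in class (A6) with $m = 4$: there $G_0 \unrhd \SU_4(q_0)$ acts on a $4$-dimensional space over $\GF(q_0^2)$, so $G_0 \leq \GamL_4(q)$ with $q = q_0^2$, but the invariant quadratic form is defined over the subfield $\GF(q_0)$ and $G^{(2)} = \GF(q_0)^{8} \rtimes \GamO^{+}_{8}(q_0)$, which is not contained in $\AGL_4(q_0^2)$ --- a semisimilarity of the form need not be $\GF(q_0^2)$-semilinear. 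Brute force over $\AGL_4(q)$ therefore returns only the proper subgroup $G^{(2)} \cap \AGL_4(q)$. The paper avoids this by reading off from Table~\ref{graphtab} that $a(G) \leq 4$ forces $G^{(2)} \leq \AGL_{8}(q_0)$ for the appropriate (sub)field, whose order is still at most $n^{10}$ by Lemma~\ref{aglsize}, and brute-forcing there. Your argument can be repaired either that way, or by noting that the two alternatives of Lemma~\ref{gens} overlap (in the problematic case $G_0$ is also $6$-generated and irreducible for $S$) and running both branches, returning the larger output; but as written, committing to the $\AGL_4(q)$ brute force whenever $G_0 \leq \GamL_4(q)$ gives a wrong answer for (A6) with $m = 4$.
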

\begin{proof}
	By Corollary~\ref{qfield}, we can find the required field and an overgroup \( \GamL_a(q) \).
	If \( a \leq 4 \), then \( G^{(2)} \leq \AGL_8(q) \) (see Table~\ref{graphtab}),
	so we can find the 2-closure in polynomial time by brute-force. Now we can assume that
	\( G_0 \) is not contained in \( \GamL_4(q) \), so by Lemma~\ref{gens}, \( G_0 \) is 6-generated and
	its generators \( g_1, \dots, g_6 \) can be obtained in polynomial time.

	We describe the procedure for constructing the quadratic form \( \kappa \).
	Choose an arbitrary non-isotropic vector \( v \neq 0 \),
	and choose some value for \( \kappa(v) \in \GF(q) \) (there are at most \( n \) such values).
	Now, choose scalars \( \lambda_1, \dots, \lambda_6 \in \GF(q) \) and automorphisms \( \alpha_1, \dots, \alpha_6 \in \Aut(\GF(q)) \);
	observe that there are at most polynomial number of such choices.
	Assume that \( \kappa(u^{g_i}) = \lambda_i \kappa(u)^{\alpha_i} \), \( i = 1, \dots, 6 \), for all \( u \in V \).
	Since all non-isotropic vectors are conjugate to \( v \) by elements of \( G_0 \), Lemma~\ref{qformRestore} implies that
	we can compute \( \kappa(u) \) for all non-isotropic \( u \) and hence for all \( u \in V \).

	Using Lemma~\ref{qform} we can check that \( \kappa \) is a non-degenerate quadratic form and that \( G_0 \) acts
	on it by semisimiliarities. If that is the case for at least one choice of the field, \( \kappa(v) \), scalars
	\( \lambda_i \) and automorphisms \( \alpha_i \), \( i = 1, \dots, 6 \), then \( G \) belongs to (Q).
	Clearly the described procedure will succeed if \( G \) does belong to (Q), so the first claim is proved.

	Now, in order to construct the 2-closure of \( G \),
	we restore the bilinear form associated with \( \kappa \) and apply Lemma~\ref{qform2Clgens}.
\end{proof}

We provide an overview of the algorithm in case (Q) in the following figure (Algorithm~\ref{qformAlgo}).
Notice that this algorithm may find the 2-closure correctly even if the group does not belong
to class (Q), for example, that is the case with some subgroups of \( \GamL_4(q) \) lying in class (S).

\begin{algorithm}[p]
	\SetAlgoLined
	\KwIn{generators of an affine rank~3 group \( G \) of sufficiently large degree \( n = p^d \)}
	\KwOut{generators of \( G^{(2)} \) if \( G \) lies in (Q)}
	\medskip

	Check that \( G \) is affine and construct \( G_0 \)\;
	\smallskip

	\tcp{Case when \( a(G) = 4 \)}
	Set \( L \) to be an empty list\;
	\For(\tcp*[f]{Corollary~\ref{qfield}}){all embeddings of \( G_0 \) into \( \GamL_4(q) \)}{
		Add \( G^{(2)} \cap \AGL_8(q) \) to the list \( L \)\;
		}
	\If{\( L \) is not empty}{
		\Return{the largest member of \( L \)}
		}
	\smallskip

	\tcp{Case when \( a(G) > 4 \)}
	Find elements \( g_1, \dots, g_6 \) generating \( G_0 \) \tcp*{Lemma~\ref{gens}}
	\For(\tcp*[f]{Corollary~\ref{qfield}}){all embeddings of \( G_0 \) into \( \GamL_a(q) \)}{
		Find the associated field \( \GF(q) \)\;
		Choose arbitrary non-isotropic \( v \in V \setminus 0 \)\;
		\For{all \( \gamma, \lambda_1, \dots, \lambda_6 \in \GF(q) \)}{
			\For{all automorphisms \( \alpha_1, \dots, \alpha_6 \in \Aut(\GF(q)) \)}{
				Set \( \kappa(u) = 0 \) for all isotropic vectors \( u \)\;
				Extend the function \( \kappa : V \to \GF(q) \)
				to all non-isotropic vectors using identities \( \kappa(v) = \gamma \)
				and \( \kappa(u^{g_i}) = \lambda_i\kappa(u)^{\alpha_i} \), where \( i = 1, \dots, 6 \) and \( u \in V \)
				\tcp*{Lemma~\ref{qformRestore}}
				\If(\tcp*[f]{Lemma~\ref{qform}}){\( \kappa \) is a non-degenerate quadratic form\\
								 and \( G_0 \) acts on it by semisimiliarities}{
									 Restore the associated bilinear form\;
									 \Return{\( G^{(2)} \)}\tcp*[f]{Lemma~\ref{qform2Clgens}}
					}
				}
			}
		}
		\smallskip

		\tcp{No embedding or choice of vectors, scalars or automorphisms defines a correct quadratic form}
		\Return{failure}
		\caption{Quadratic form case\label{qformAlgo}}
\end{algorithm}

Finally, we remind the overall algorithm for computing 2-closures of rank~3 groups.
If degree of the group is at most \( 3^{12} \), we find the 2-closure by brute-force.
Otherwise, we apply Algorithms~\ref{nonaffAlgo}--\ref{qformAlgo} to our group and obtain
at most four possible answers (some algorithms may fail, but at least one of them must succeed
by our classification of rank~3 groups into nonaffine, (S), (T) and (Q) cases). We choose the largest
output as our final answer. Since all of four candidate groups lie in the 2-closure and at least
one coincides with it, the algorithm works correctly.

\section{Conclusion}\label{secConcl}

It was shown that the 2-closure of rank~3 groups can be computed in polynomial time.
We would like to make a few comments regarding the complexity of the algorithm,
some implications of the methods involved and mention a few directions for future research.

Since the question of exact complexity estimates was out of scope of this paper, it is easy
to notice that our algorithms are of purely theoretical nature and their precise complexity is quite large.
The bottleneck here seems to be Algorithm~\ref{smallAlgo}, which requires at least \( O(n^{50}) \) operations in the worst case
when searching for a generating 4-tuple of elements of \( G \). It seems plausible that one can
lower this bound by designing custom algorithms for graphs from class (S), such as, for example,
the Suzuki--Tits ovoid graph and the alternating forms graph.

It should also be mentioned that in many cases we do not only construct the 2-closure,
but recognize the corresponding rank~3 graph as well, with the only exception being graphs
arising from almost simple groups (i.e.\ Theorem~\ref{class}~(iii)). The claim is clear
for Hamming graphs, bilinear forms graphs (class (T)) and affine polar graphs (class (Q)).
For graphs arising from class (S) one can use a slight modification of Algorithm~\ref{smallAlgo}.
One constructs canonic copies of ``potential'' rank~3 graphs (\( A(5, q) \), \( \VO_8^+(q) \), etc.)\ and
their automorphism groups, and then uses the embedding procedure (Lemma~\ref{embed}) to
conjugate the given rank~3 group \( G \) inside the full automorphism group of the potential graph.
It is then easy to check if \( G \) does indeed stabilize the graph, and since there are only polynomially
many potential graphs the whole recognition procedure runs in polynomial time.

Finally, an important open problem is the recognition of rank~3 graphs without using the
corresponding rank~3 group:
\medskip

\noindent
\textbf{Question.} Given a graph, is it possible to check if it is a rank~3 graph and
find an appropriate labelling in polynomial time?
\medskip

Notice that the result of Imrich and Klav\u{z}ar~\cite{imrichHamming} answers this question in the case
of a Hamming graph. It is interesting whether other classes of rank~3 graphs admit such an algorithm,
for example, the question is open even for the bilinear forms graph. Clearly if one could construct a
polynomial algorithm for recognizing (and labelling) all rank~3 graphs, it would supersede the results
presented in this paper, and moreover, would solve the graph isomorphism problem for rank~3 graphs.

One promising approach to this problem is to bound the combinatorial base number of respective graphs;
combinatorial base (also called the EP-base in~\cite{baileyBases}, and just base in~\cite{evdokimovCycl})
is a generalization of the usual notion of a base of a permutation group. If a graph has combinatorial base number \( b \)
then its full automorphism group has base number at most~\( b \), in particular, its order is bounded by \( n^b \).
This implies that many classes of rank~3 graphs cannot have bounded base since they have large automorphism groups,
yet it is quite possible that all rank~3 graphs associated with affine rank~3 groups from class (S)
may have combinatorial base number bounded by some universal constant. In~\cite{evdokimovCycl} it was shown than
Paley graphs have combinatorial base number at most~3, but it seems to be the only known case, the problem being open
even for other one-dimensional affine graphs such as Peisert graphs.

\section{Acknowledgements}

The author would like to express his gratitude to M.A.~Grechkoseeva, M.~Muzychuk, I.N.~Ponomarenko,
A.V.~Vasil'ev and E.P.~Vdovin for helpful remarks and comments on this paper.

The work is supported by Mathematical Center in Akademgorodok under agreement
No.~075-15-2019-1613 with the Ministry of Science and Higher Education of the Russian Federation.


\end{document}